\newtheorem{maingt}{Theorem}[section]
\newtheorem{tspace1}[maingt]{Definition}
\newtheorem{dilatoninput}{Remark}[section]
\newtheorem{cohtwfake}[dilatoninput]{Example}
  \newtheorem{cobcg}[dilatoninput]{Theorem}
  \newtheorem{mclaurin}[dilatoninput]{Definition}
\newtheorem{euler1}{Remark}[section]
\newtheorem{euler}[euler1]{Assumption}
\newtheorem{inv}[euler1]{Remark}
\newtheorem{hypers}[euler1]{Remark}
\newtheorem{positive}[euler1]{Convention}
\newtheorem{main}[euler1]{Theorem}
 \newtheorem{rem}[euler1]{Remark}
 \newtheorem{rem11}[euler1]{Remark}
 \newtheorem{fakest}{Proposition}[section]
 \newtheorem{identification}[fakest]{Remark}
 \newtheorem{stemtw}[fakest]{Proposition}
 \newtheorem{cohstem}[fakest]{Proposition}
 \newtheorem{stems}[fakest]{Proposition}
 \newtheorem{psic}[fakest]{Proposition}
 \newtheorem{11}[fakest]{Proposition}
 \newtheorem{snj}{Theorem}[section]
    \newtheorem{twsnj}[snj]{Theorem}
    \newtheorem{globaltwsnj}[snj]{Theorem}
     \newtheorem{shift}[snj]{Remark}
     \newtheorem{general}[snj]{Corollary}
     \newtheorem{dmodule}[snj]{Theorem}
     \newtheorem{lefschetz}[snj]{Theorem}
     \newtheorem{projhyper}[snj]{Corollary}
     \newtheorem{cigeneral}[snj]{Remark}
     \newtheorem{toricfibration}[snj]{Theorem}
     \newtheorem{last}[snj]{Proposition}
\theoremstyle{remark}
\theoremstyle{remark}
\numberwithin{equation}{section}
\numberwithin{equation}{section}
\newcommand{\ra}{\right\rangle}
\newcommand{\la}{\left\langle}
\newcommand{\iy}{\mathcal{Y}}
\def\<{\left\langle}
\def\>{\right\rangle}
\begin{document}

\title{Twisted K-theoretic Gromov-Witten invariants}
\author[Tonita]{Valentin Tonita}
\address{Humboldt Univerit\"at \\ Rudower Chaussee 25 Raum 1425 \\ Berlin 10099\\ Germany}
\email{valentin.tonita@hu-berlin.de}

\begin{abstract}
We introduce twisted K-theoretic Gromov-Witten (GW) invariants in the frameworks of both "ordinary"  and permutation-equivariant K-theoretic GW theory defined recently by Givental. We focus on the case when the twisting is given by the Euler class of an index bundle which allows one (under a convexity assumption on the bundle) to relate K-theoretic GW invariants of hypersurfaces with those of the ambient space. Using the methods developed in \cite{gito} we characterize the range of the $J$-function of the twisted theory in terms of the untwisted theory. 
As applications we use the $\mathcal{D}_q$ module structure in the permutation-equivariant case to generalize results of \cite{giv}: we prove a general "quantum Lefschetz" type theorem for complete intersections given by zero sections of convex vector bundles  and we relate points on the cones of the total space with those of the base of a toric fibration.  
\end{abstract}
\maketitle

\section{Introduction}
 K-theoretic Gromov-Witten invariants have been introduced by A. Givental and Y.-P. Lee(\cite{giv_lee}, \cite{ypl}). They are holomorphic Euler characteristics of vector bundles on the moduli spaces of stable maps to $X$.  

 One of the initial motivations was to provide a new way to count rational curves on the famous Calabi-Yau quintic threefold. In this case  the moduli spaces of stable maps are zero dimensional and the Euler characteristics of their structure sheaves would  give the number of curves. For this purpose a formalism of twisted K-theoretic Gromov-Witten theory analogous to the cohomological one needs to be developed. Roughly speaking the twisted invariants are defined by including in the  correlators Euler classes of index bundles. Then one would like to express them in terms of the original ("untwisted") invariants. If one starts with a convex line bundle the genus zero twisted invariants are actually GW invariants of the hypersurface given as the zero locus of a section of the bundle. Thus the relation between the twisted and untwisted theories translates into a relation between GW invariants of the hypersurface and those of the ambient space. 
 
 Givental recently introduced in \cite{giv} a new enriched version of the theory called permutation-equivariant K-theoretic Gromov-Witten theory, which takes into account the $S_n$ action on the moduli spaces permuting the marked points. The permutation-equivariant theory fits better within the framework of mirror symmetry. In particular certain $q$-hypergeometric series associated with toric manifolds lie on the cone of their permuation-equivariant K-theory.
 
 In this paper we introduce twisted K-theoretic Gromov-Witten invariants in genus zero for both versions of quantum K-theory. The generating functions 
 of the invariants are called the $J$-functions. Their images are Lagrangian cones  living in infinite dimensional loop spaces $\mathcal{K}$. We use Kawasaki Riemann Roch theorem to characterize the cone of the twisted theory in terms of the cone of the untwisted theory. 
 
  Our results allow us to significantly generalize Givental's K-theoretic "mirror formulas" for the permutation-equivariant theories: we prove a "quantum Lefschetz" type theorem for an arbitrary smooth projective variety $X$, namely we show that certain hypergeometric modifications of points on the cone of $X$ lie on the cone of a complete intersection given as the zero locus of sections of convex line bundles. Moreover we prove a similar statement for toric fibrations, describing points on the cone of the total space $E$ in terms of points on the base.
 
 We include some short computations at the end. In particular we show how one can count the $2875$ lines on the Calabi-Yau quintic using our results. More generally one can use our results to compute GW invariants of all complete intersections in projective spaces. According to reconstruction theorems (see \cite{giv}) one can recover all genus zero "ordinary" and permutation-equivariant K-theoretic GW invariants of a projective manifold (under the assumption that the ring $K^0(X)$ is generated by line bundles) from a point on their K-theoretic Lagrangian cone. Hence in principle we can determine the quantum K-theory for all projective manifolds for which our results can be used to find a point on their K-theoretic Lagrangian cones. 
 
 The paper is organized as follows. In Section \ref{sec02} we introduce the basic objects of K-theoretic \footnote{when not specified it will always mean non permutation-equivariant} GW theory in genus zero including the $J$-function and the Lagrangian cone $\mathcal{L}$. We also recall the main result of \cite{gito} which characterizes the cone $\mathcal{L}$ in terms of the cone of the cohomological GW theory.  Section \ref{twcohgwgen} contains a brief review of the formalism of cohomological twisted GW theory. In Section \ref{sec04}  we define twisted K-theoretic invariants and state the main result in the non-permutation equivariant case. This is Theorem \ref{thm12} which describes the cone $\mathcal{L}^{tw}$ in terms of the cone of the untwisted one. The proof of Theorem \ref{thm12} is done in Section \ref{sec05} following the ideas in \cite{gito}. We write Laurent expansions of the $J$ function near each root of unity and identify it with generating series coming from certain twisted cohomological theories to characterize their ranges. 
 Section \ref{sec06} deals with the permutation-equivariant theory: the relation between the twisted and untwisted cones  turns out "nicer" in this case. This is the content of Theorem \ref{snthm}. We combine it with the $\mathcal{D}_q$ module structure of \cite{giv} to prove the applications mentioned above. We end with some computations of  K-theoretic GW invariants of complete intersections in projective spaces.

 \textbf{Acknowledgements.} I would like to thank A. Givental for explaining to me the permutation-equivariant theory and for useful discussions. Parts of these discussions took place during a visit at IBS, Center for Geometry and Physics, Pohang. I would like to thank the institute for support and hospitality. I would also like to thank the anonymous referee for the useful comments.
 
 \section{K-theoretic Gromov-Witten theory} \label{sec02}
 Let $X$ be a complex projective manifold. We denote by $X_{0,n,d}$ Kontsevich's moduli stack of genus zero stable maps to $X$: they parametrize
  data $(C, x_1,\ldots ,x_n, f)$  such that
  \begin{itemize}
  \item $C$ is a connected projective complex curve of arithmetic genus zero with at most nodal singularities.
  \item  $(x_1,\ldots x_n) \in C$ is an ordered $n$-tuple of distinct smooth points on $C$ (they are called marked points).
  \item $f:C\to X$ is a map of degree $d\in H_2(X,\mathbb{Z}$).
  \item The data $(C, x_1,\ldots ,x_n, f)$ has finite automorphism group, where an automorphism is defined to be an automorphism $\varphi:C\to C$ such 
  that $\varphi(x_i) =x_i$ for all $i=1,..n$ and $f\circ \varphi =f$. 
  \end{itemize}
   For each $i=1,..,n$ there are evaluation maps $ev_i : X_{0,n,d}\to X $ defined by sending a point $(C, x_1,\ldots ,x_n, f)\mapsto f(x_i)$ and   cotangent line bundles $L_i \to X_{0,n,d}$ whose fibers over a point $(C, x_1,\ldots ,x_n, f)$ are identified with $T^\vee_{x_i}C$.
  
  K-theoretic Gromov-Witten (GW) invariants have been defined by Givental and Lee (\cite{giv_lee},\cite{ypl}) as sheaf holomorphic Euler characteristics on $X_{0,n,d}$  obtained using the maps $ev_i$ and the line bundles $L_i$:
   \begin{align*}
   \chi\left(X_{0,n,d}, \mathcal{O}_{n,d,X}^{vir}\otimes _{i=1}^n ev_i^* (E_i)L_i^{k_i}\right)\in \mathbb{Z}.
   \end{align*}
 
 Here $\mathcal{O}_{n,d}^{vir}\in K_0(X_{0,n,d})$ (we will generally suppress $X$ from the notation) is the virtual structure sheaf defined in \cite{ypl}.
  We will use correlator notation for the invariants:
   \begin{align*}
   \la E_1 L^{k_1},\ldots , E_n L^{k_n}\ra_{0,n,d}:= \chi\left(X_{0,n,d}, \mathcal{O}_{n,d}^{vir}\otimes _{i=1}^n ev_i^* (E_i)L_i^{k_i}\right).
   \end{align*}
   
   The generating series of these invariants is called the (K-theoretic) $J$-function. Let 
   \begin{align*}
   &\mathcal{K}_+ : = K^0(X,\mathbb{C}[[Q]])\otimes \mathbb{C}[q,q^{-1}], \\
   &\mathcal{K}:=K^0(X,\mathbb{C}[[Q]])\otimes \mathbb{C}(q).
   \end{align*}  
   The $J$-function is 
    \begin{align*}
   & \mathcal{J}:\mathcal{K}_+ \to \mathcal{K}, \\
   & \mathcal{J}(\mathbf{t}(q))=1-q+\mathbf{t}(q) + \sum_{d,n,a}\frac{Q^d}{n!}\Phi^a\la \frac{\Phi_a}{1-qL}, \mathbf{t}(L),\ldots , \mathbf{t}(L) \ra_{0,n+1,d} . 
    \end{align*}
    Here $\{\Phi_a\}, \{\Phi^a\}$ are bases of $K^0(X)$ dual with respect to the pairing
      \begin{align*}
      (\Phi_a, \Phi_b)= \chi(X, \Phi_a\otimes \Phi_b)
      \end{align*}
     and $Q^d$ are monomials in the Novikov ring based on the cone of effective curves in $Eff(X)\subset H_2(X)$.
   
   The image $\mathcal{L}\subset \mathcal{K}$ of the $J$ function has been characterized in \cite{gito} in terms of the cohomological GW theory of $X$. We briefly recall the main result there, referring to \cite{gito} for details. 
     
    To express holomorphic Euler characteristics of a vector bundle $V$ on a compact complex orbifold $\iy$ as a cohomological integral one uses Kawasaki Riemann Roch (KRR) theorem of \cite{kawasaki} (proven by T\"oen in \cite{toen}  for proper smooth Deligne-Mumford stacks). The integrals are supported on the inertia orbifold $I\iy$ of $\iy$:
       \begin{align}
        \chi(\iy, V) = \sum_\mu \int_{\iy_\mu} \operatorname{Td}(T_{\iy_\mu})\operatorname{ch} \left(\frac{\operatorname{Tr}(V)}{\operatorname{Tr}(\Lambda^\bullet N^\vee_\mu)}\right).\label{eqn:KRR} 
        \end{align}
    We now explain this ingredients of this formula. $I\iy$ is the inertia orbifold of $\iy$, given set-theoretically by pairs $(y, (g))$, where $y\in \iy$ and $(g)$ is (the conjugacy class of) a symmetry which fixes $y$.  We denote by $\iy_\mu$ the connected components \footnote{We frequently refer to them as Kawasaki strata.} of $I\iy$.

        For a vector bundle $V$, let $V^\vee$ be the dual bundle to $V$. The restriction of $V$ to $\iy_\mu$ decomposes in characters of the $g$ action. Let $V_r^{(l)}$ be the subbundle of the restriction of $V$ to $\iy_\mu$ on which $g$ acts with eigenvalue $e^{\frac{2\pi i l}{r}}$. Then the  trace $\operatorname{Tr}(V)$ is defined to be the orbibundle whose fiber over the point $(p, (g))$ of $\iy_\mu$ is 
         \begin{align*}
      \operatorname{Tr}(V):= \sum_{0\leq l\leq r-1} e^{\frac{2\pi i l}{r}} V^{(l)}_r . 
         \end{align*} 
      Finally, $\Lambda^\bullet N^\vee_\mu$ is the K-theoretic Euler class of the normal bundle $N_\mu$ of $\iy_\mu$ in $\iy$. $\operatorname{Tr}(\Lambda^\bullet N^\vee_\mu)$ is invertible because the symmetry $g$ acts with eigenvalues different from $1$ on the normal bundle to the fixed point locus.

    $X_{0,n,d}$ is not smooth but it has a perfect obstruction theory which can be used to define its virtual fundamental class (see \cite{behfan}). For a stack $(\mathcal{Y},E^\bullet)$ with a perfect obstruction theory that can be embedded in a smooth proper stack ($X_{0,n,d}$ satisfies this assumption) one can choose an explicit resolution of $E^\bullet$ as a complex of vector bundles $E^{-1}\to E^0$. Let $E_0\to E_1$ be the dual complex. Then the virtual tangent bundle of $\mathcal{Y}$ can be defined as the class $[E_0] \ominus [E_1] \in K^0(\mathcal{Y})$ (see \cite{fago}).
    Moreover the connected components of the inertia orbifold of $\mathcal{Y}$ inherit perfect obstruction theories which can be used to define their virtual normal bundles.
     
    It was proved in \cite{to_k} that one can apply KRR theorem to the moduli spaces $X_{0,n,d}$ by replacing all the ingredients in the formula with their virtual counterparts. The symmetries on $X_{0,n,d}$ which have non-trivial action on the cotangent line bundle $L_1$ create poles at all roots of unity in the 
    $J$-function. For each primitive root of unity $\eta$ of order $m$ denote by $\mathcal{J}_\eta$ the Laurent expansion of the $J$-function in $(1-q\eta)$ and regard it as an element in the loop space of  such Laurent power series  with coefficients in $K^0(X)$
    \begin{align*}
    \mathcal{K}^\eta:= K^0(X)[\frac{1}{1-q\eta},(1-q\eta)]].
    \end{align*} 
    Let us look at $\eta=1$:  the contributions in KRR formula come from the identity component of the inertia orbifold of $X_{0,n,d}$. They were called {\em fake} K-theoretic GW invariants and are of the form: 
      \begin{align*}
       \la \mathbf{t}(L),\ldots ,\mathbf{t}(L)\ra_{0,n,d}^{fake} :=\int_{[X_{0,n,d}]} \prod_{i=1}^n\operatorname{ch}(\mathbf{t}(L_i))\operatorname{Td}(T_{0,n,d}). 
          \end{align*} 
          where $[X_{0,n,d}]$ is the virtual fundamental class of the moduli space, $T_{0,n,d}$ is the virtual tangent bundle and the product is the cohomological cup product.
          
      Consider the generating series of the fake invariants, i.e. elements of $\mathcal{K}^1$ of the form
      \begin{align*}
    J_{fake}(q,\mathbf{t}(q)):=  1-q+\mathbf{t}(q)+ \sum_{d,n,a}\frac{Q^d}{n!}\Phi^a\la \frac{\Phi_a}{1-qL}, \mathbf{t}(L),\ldots , \mathbf{t}(L) \ra^{fake}_{0,n+1,d} .
      \end{align*}
      Here the argument $\mathbf{t}(q)$ belongs to the space 
      \begin{align*}
      \mathcal{K}^1_+ :=K^0(X, \mathbb{C}[[Q]])[[q-1]].
      \end{align*}
      
   The range of $J_{fake}$ spans a Lagrangian cone $\mathcal{L}_{fake}\subset \mathcal{K}^1$ which can be described explicitly in terms of the cohomological GW theory of $X$. We will make this precise in the next section.
  
    The main theorem of \cite{gito} describes for all $\eta$ the range of $\mathcal{J}_\eta(\mathbf{t}(q))$ in terms of the cone $\mathcal{L}_{fake}$.
    
    \begin{maingt}\cite{gito}\label{mainadelic}
    The  K-theoretic $J$ function of $X$ is completely characterized by the following conditions
    \begin{enumerate}
    \item If $\eta$ is not a root of unity $\mathcal{J}_\eta (\mathbf{t}(q))$ does not have poles at $q=\eta^{-1}$.
    \item $\mathcal{J}_1(\mathbf{t}(q)) \in \mathcal{L}_{fake}$.
    \item  Let $\eta$ be a  primitive root of unity of order $k\neq 1$ and let $\mathcal{T}_k (\mathcal{J}_1(0))$ be as in Definition \ref{tang1} below. Identify 
    $\mathcal{K}^\eta$ with $\mathcal{K}^1$ via $q\eta\mapsto q$. Then 
    \begin{align*}
    \mathcal{J}_\eta (\mathbf{t}(q^{1/k}\eta^{-1}))\in \exp \sum_{i \geq 1} \left(\frac{\psi^i T_X^\vee}{i(1-\eta^{-i} q^{i/k})}-\frac{\psi^{ik} T_X^\vee}{i(1-q^{ik})} \right)\mathcal{T}_k (\mathcal{J}_1(0)).
    \end{align*}
   
      \end{enumerate}
    \end{maingt}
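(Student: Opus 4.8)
The plan is to expand the holomorphic Euler characteristics defining $\mathcal{J}$ by the virtual Kawasaki--Riemann--Roch theorem of \cite{to_k} applied to the moduli spaces $X_{0,n+1,d}$, and to read off the pole structure of $\mathcal{J}(\mathbf{t}(q))$ in $q$ from the Kawasaki strata. A stratum $(X_{0,n+1,d})_\mu$ corresponds to a conjugacy class of a symmetry $g$ of a stable map; through the factor $1/(1-qL_1)$ restricted to the stratum, the correlator $\la \Phi_a/(1-qL_1),\mathbf{t}(L),\ldots\ra$ contributes a rational function of $q$ whose poles sit at $q=\zeta^{-1}$, where $\zeta$ is the eigenvalue by which $g$ acts on the cotangent line $L_1$ at the distinguished marked point. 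Since $\zeta$ is always a root of unity, $\mathcal{J}$ can only have poles at roots of unity, which gives condition (1): if $\eta$ is not a root of unity, no stratum produces a pole at $q=\eta^{-1}$, so $\mathcal{J}_\eta$ is regular there.

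For $\eta=1$ one collects the Laurent expansion at $q=1$ of all strata. The identity component $X_{0,n+1,d}$ yields, by the definition of the fake invariants, the series $J_{fake}(q,\mathbf{t}(q))$. The remaining strata that are polar at $q=1$ are those on which $g$ acts trivially on $L_1$: there $g$ restricts to the identity on the component carrying the distinguished point and is a nontrivial symmetry of a multiple-cover ``tail'' attached by a node. I would resum these tail contributions; using the recursive structure of the fake cone $\mathcal{L}_{fake}$ --- that gluing such symmetric tails at the node amounts to a change of the input variable --- one shows the total equals $J_{fake}$ evaluated at a shifted argument $\mathbf{t}_{fake}(q)\in\mathcal{K}^1_+$, hence $\mathcal{J}_1(\mathbf{t}(q))\in\mathcal{L}_{fake}$, which is condition (2).

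For $\eta$ a primitive root of unity of order $k>1$, the strata polar at $q=\eta^{-1}$ are those on which $g$ acts on $L_1$ by a primitive $k$-th root of unity. Such a symmetry forces, near $x_1$, a chain of rational components rotated by $g$ --- a $\mathbb{Z}_k$-orbifold structure at the distinguished point --- glued by a node to the rest of the curve. The normalization of such a stratum is a fibre product of an auxiliary moduli space encoding the orbifold tail with an ordinary space $X_{0,m+1,d'}$ glued along evaluation at the node; the latter factor, summed over all degrees and markings and after the substitution $q\mapsto q^{1/k}\eta^{-1}$ built into the $k$-fold cover, assembles into the tangent datum $\mathcal{T}_k(\mathcal{J}_1(0))$ of Definition \ref{tang1}. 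The remaining ``local'' part of the KRR contribution is the ratio of $\operatorname{Td}$ of the virtual tangent bundle of the stratum to $\operatorname{ch}\operatorname{Tr}(\Lambda^\bullet N_\mu^\vee)$ and to the corresponding factor already present in $\mathcal{L}_{fake}$: the virtual normal bundle of the stratum is built from $T_X$ pulled back along the node evaluation, tensored with the $\mathbb{Z}_k$-characters and with powers of the node-smoothing line, so computing this ratio term by term and summing logarithms produces exactly the operator $\exp\sum_{i\geq 1}\big(\psi^i T_X^\vee/(i(1-\eta^{-i}q^{i/k}))-\psi^{ik}T_X^\vee/(i(1-q^{ik}))\big)$, the subtracted term removing the part already accounted for by $\mathcal{L}_{fake}$. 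This gives condition (3).

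I expect the main obstacle to be the last two steps --- pinning down the precise combinatorics of the contributing Kawasaki strata (which symmetries occur, how each stratum decomposes as a fibre product over the node, and how the orbifold structure at $x_1$ interacts with gluing and with the input $\mathbf{t}$), and then matching the Todd/Euler-class ratio for the virtual normal bundle with the stated exponential operator; once the stratum geometry is fixed, the rest is a bookkeeping exercise in KRR together with string- and dilaton-type rearrangements. Finally, that (1)--(3) \emph{characterize} $\mathcal{J}$ follows by a uniqueness argument: (1) and (3) prescribe the principal part of $\mathcal{J}$ at every $q=\eta^{-1}$, (2) prescribes its image in $\mathcal{L}_{fake}$, and a recursion on the Novikov degree $d$ (and the number of marked points) reconstructs the input $\mathbf{t}(q)$, hence $\mathcal{J}(\mathbf{t}(q))$, from these data.
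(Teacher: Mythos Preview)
Your overall strategy---apply virtual KRR and sort Kawasaki strata by the eigenvalue of the symmetry on $L_1$---is the right one and matches \cite{gito} (the present paper only quotes the theorem; the method is visible in Section~\ref{sec05}, where the twisted analogue is proved along the same lines). Conditions (1) and (2) are handled essentially as in the paper: for (2), the strata with trivial action on $L_1$ are exactly those where the symmetry is trivial on the component $C_+$ carrying $x_1$, and summing over what is attached at the nodes of $C_+$ shifts the input, landing on $\mathcal{L}_{fake}$ (Proposition~\ref{fakest}).

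For condition (3), however, your geometric picture is off in a way that blocks the argument. The relevant strata (\emph{stem spaces}) are not ``a chain of rational components rotated by $g$'' glued to an ordinary piece $X_{0,m+1,d'}$. Rather, $C_+$ is a single $\mathbb{P}^1$ on which $g$ acts by $z\mapsto\eta z$, so the map $C_+\to X$ factors through $z\mapsto z^k$; the fixed points are $0=x_1$ and $\infty$, and the remaining special points of $C_+$ occur as $k$-tuples of nodes permuted by $g$, each tuple attaching $k$ isomorphic \emph{leg} curves (carrying no marked points in the ordinary theory). The step you are missing is the identification of stem spaces with moduli of stable maps to the orbifold $X\times B\mathbb{Z}_k$: the KRR integrand then becomes a product of twisting classes (\ref{tloop})--(\ref{tnodes}) in the cohomological GW theory of $X\times B\mathbb{Z}_k$, and the operators $\Box_\eta,\Box_k$ are read off from Tseng's orbifold quantum Riemann--Roch, not from a direct Todd/Euler-class ratio. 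In particular, $\mathcal{J}_\eta$ is a \emph{tangent vector} to the stem-theory cone with application point in the sector labeled $1$ (rotated by $\Box_k$) and direction in the sector labeled $g^{-1}$ (rotated by $\Box_\eta$); one shows $\operatorname{qch}^{-1}(\Box_k\mathcal{L}_H)=\psi^k\mathcal{L}_{fake}$ and that the application point is $\psi^k(\mathcal{J}_1(0))$ because each $k$-tuple of legs contributes $\psi^k$ of the leg input via $\operatorname{Tr}(g\mid V^{\otimes k})=\psi^kV$. The operator in the statement is then $\Box_\eta\Box_k^{-1}$, the discrepancy between the two sector rotations---not a subtraction of ``the part already in $\mathcal{L}_{fake}$''. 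Without the orbifold identification and the two-sector structure you will not produce either the $\psi^k$ inside Definition~\ref{tang1} or the precise exponential operator.
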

    \begin{tspace1}\label{tang1}
             {\em  Let $\mathbf{f}$ be a point on  $\mathcal{L}_{fake}$, let $T(\mathbf{f})$ be the tangent space to $\mathcal{L}_{fake}$ at $\mathbf{f}$, considered as
                    the image of a map $S(q,Q):\mathcal{K}^1_+\to \mathcal{K}^1$. Recall the Adams operations $\psi^k$ are ring isomorphisms of $K^0(X)$ which act on line bundles as $L\mapsto L^k$. Denote by $\psi^{\frac{1}{k}}$  the isomorphism of $\mathcal{K}^1_+$ which is the inverse of $\psi^k$ on $K^0(X)$ and does not act on $q, Q^d $. Then define  }
                  \begin{align*}
                  \mathcal{T}_k (\mathbf{f}) :=\quad  \operatorname{Image} \quad \operatorname{of} \quad \psi^k \circ  S(q^k, Q^k) \circ \psi^{1/k}: \mathcal{K}^1_+ \to \mathcal{K}^1.
                  \end{align*}              
                 \end{tspace1}
                 
                 \section{Twisted cohomological Gromov-Witten theory}\label{twcohgwgen} 
                         
                         The proofs of Theorem \ref{mainadelic} as well as of the main statements in the upcoming sections rely heavily on the machinery of 
                         twisted cohomological GW invariants. They were introduced in \cite{coatesgiv} and generalized in various directions in \cite{tseng} and \cite{to1}.
                         We succinctly review it below, emphasizing the example of the fake GW invariants.  We use the same correlator notation 
                         for cohomological GW invariants 
                          $$\langle \varphi_1 \psi^{k_1},\ldots ,\varphi_n \psi^{k_n}\rangle_{0,n,d}:= \int_{[X_{0,n,d}]}\prod_{i=1}^n ev_i^*(\varphi_i)\psi_i^{k_i}.$$ 
                         The different notation for the classes inside the correlators makes it easy to distinguish them from the K-theoretic GW-invariants. The product in the integrand is the cohomological cup product.
                          
                           Let $\mathcal{H}$ be the loop space of the cohomological GW theory of $X$
                              \begin{align*}
                              \mathcal{H}:=H^*(X,\mathbb{C}[[Q]])[z^{-1},z]].
                               \end{align*}
                              It comes equipped with a symplectic form and carries a distinguished polarization  $ \mathcal{H}_+ \oplus \mathcal{H}_-$  where
                              \begin{align*}
                             \mathcal{H}_+:= H^*(X,\mathbb{C}[[Q]])[[z]], \quad \mathcal{H}_-:= \frac{1}{z}H^*(X,\mathbb{C}[[Q]])[z^{-1}].
                              \end{align*} 
                              
                              The $J$-function of the cohomological GW theory is defined as
                               \begin{align*}
                              & J_H: \mathcal{H}_+ \to \mathcal{H}, \\
                                & J_H( \mathbf{t}(z))=-z+\mathbf{t}(z) +\sum_{d,n,a}\frac{Q^d}{n!}\varphi^a\la \frac{\varphi_a}{-z-\psi}, \mathbf{t}(\psi),\ldots , \mathbf{t}(\psi) \ra_{0,n+1,d} .
                               \end{align*}       
                       It is identified with the graph of the differential of the genus zero potential 
                               \begin{align*}
                               \mathcal{F}_0 (\mathbf{t}(z)) =\sum \frac{Q^d}{n!}\la \mathbf{t}(\psi),\ldots , \mathbf{t}(\psi) \ra_{0,n,d},
                               \end{align*}
                         viewed as a function of $\mathbf{t}(z)-z$ with respect to the polarization above. The  image of $J_H$ is a Lagrangian cone which we will denote $\mathcal{L}_H$. 
                        \begin{dilatoninput}
                        {\em The translation by $-z$ is called the } dilaton shift. {\em We will often refer to $\mathbf{t}(z)$ as} the input.  
                        \end{dilatoninput} 
                             
                       Twisted GW invariants are defined by considering in the integrals characteristic classes of push-forwards along the universal family of three types of tautological classes (they were called of type $\mathcal{A,B,C}$ in \cite{to1} ). Recall that the universal family of the moduli spaces $X_{0,n,d}$ can be identified with the map $\pi:X_{0,n+1,d}\to X_{0,n,d}$ which forgets the last marked point. The correlators of a twisted theory are typically cohomological integrals of the form 
                        
                       $$ \int_{[X_{0,n,d}]}\left( \prod_{m=1}^n ev_m^*(\varphi_m)\psi^{k_m}_m\prod_{i} \mathcal{A}_i(\pi_* (ev_{n+1}^* E)) \prod_j\mathcal{B}_j(\pi_* [F(L^\vee_{n+1})-F(1)])\prod_k\mathcal{C}_k(\pi_* i_*\mathcal{O}_\mathcal{Z}) \right),$$
                       where $\mathcal{A}_i, \mathcal{B}_j, \mathcal{C}_k$ are a finite number of multiplicative characteristic classes.
                        
                       One can similarly as above associate a Lagrangian cone to a twisted theory. The formalism of twisted GW theory (in genus zero, for the purpose of this paper) describes the correlators of a twisted theory in terms of the correlators of the untwisted theory. More precisely the three types of twistings and their effect on the correlators are :
                      \begin{itemize}
                       \item twistings by characteristic classes of index bundles $\pi_* (ev_{n+1}^* E)$. They correspond to rotation of the cone $\mathcal{L}_H$ by symplectomorphisms of $\mathcal{H}$ given by $End H^* (X)$ valued Laurent series in $z$. These symplectomorphisms are called loop group transformations.
                       \item kappa classes twistings by characteristic classes of $\pi_* [F(L^\vee_{n+1})-F(1)]$, where $F$ is a polynomial with values in $K^0(X)$. These correspond to a change of dilaton shift in the application point of the $J$-function. 
                       \item twistings by characteristic classes of $\pi_* i_*\mathcal{O}_\mathcal{Z} $, where $i:\mathcal{Z}\to X_{0,n+1,d}$ is the codimension two locus of nodes. 
                       These affect the generating series by a change of the space $\mathcal{H}_-$ of the polarization.
                      \end{itemize}      
                      
                     \begin{cohtwfake}
                     {\em Let us consider the twisted theory which we called fake in the previous section. It is given by inserting in the correlators the classes $\operatorname{Td}(T_{0,n,d})$}.
                     \end{cohtwfake}  
                          The virtual tangent bundle of $X_{0,n,d}$ can be written as a K-theoretic class (see \cite{tom}, Section 2.5)
                                  \begin{align*}
                                  T_{0,n,d} = \pi_*(ev^*_{n+1} T_X-1)   -\pi_*(L^{\vee}_{n+1}-1) -(\pi_*i_*\mathcal{O}_\mathcal{Z})^\vee.
                                  \end{align*}
                             We identify $\mathcal{K}^1$ and $\mathcal{H}$ via the Chern character
                            \begin{align*}
                            &\operatorname{qch}: \mathcal{K}^1 \to \mathcal{H},\\
                            & \Phi\mapsto \operatorname{ch(\Phi)},q\mapsto e^z.
                            \end{align*} 
                     \begin{cobcg}(\cite{tcag}, \cite{tom})\label{examplefake}
                    {\em The invariants of the fake theory are related to the cohomological GW invariants of $X$ by the following ingredients: }
                    \end{cobcg}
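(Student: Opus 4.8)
The plan is to start from the definition of the fake correlators and apply the (virtual) Hirzebruch--Riemann--Roch theorem, which is exactly the $\eta=1$ part of \eqref{eqn:KRR}: the untwisted sector of $IX_{0,n,d}$ is $X_{0,n,d}$ itself, with vanishing normal bundle and trivial traces, so $\chi\bigl(X_{0,n,d},\mathcal{O}^{vir}_{n,d}\otimes\prod_i ev_i^*(E_i)L_i^{k_i}\bigr)=\int_{[X_{0,n,d}]}\prod_i\operatorname{ch}(E_i)\operatorname{ch}(L_i)^{k_i}\operatorname{Td}(T_{0,n,d})$. Under $\operatorname{qch}\colon\mathcal{K}^1\to\mathcal{H}$ with $q\mapsto e^z$, the K-theoretic Poincar\'e pairing $\chi(X,\Phi_a\otimes\Phi_b)$ becomes, again by HRR on $X$, the $\operatorname{Td}(T_X)$-twisted cohomological pairing $\int_X\operatorname{ch}(\Phi_a)\operatorname{ch}(\Phi_b)\operatorname{Td}(T_X)$, and the propagator $1/(1-qL)$ becomes $1/(1-e^{z+\psi})$. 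Hence, after applying $\operatorname{qch}$, the generating series $J_{fake}$ is precisely the $J$-function of a \emph{twisted} cohomological GW theory in the sense of Section \ref{twcohgwgen}, with twisting class $\operatorname{Td}(T_{0,n,d})$ and with the pairing and the polarization induced by that twisting. It then remains to decompose this twisting into the three standard types.

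I would next use the K-theoretic decomposition of the virtual tangent bundle recalled above,
$$T_{0,n,d}=\pi_*(ev^*_{n+1}T_X-1)-\pi_*(L^\vee_{n+1}-1)-(\pi_* i_*\mathcal{O}_{\mathcal{Z}})^\vee,$$
together with multiplicativity of the Todd class, to write $\operatorname{Td}(T_{0,n,d})$ as the product $\operatorname{Td}(\pi_*(ev^*_{n+1}T_X-1))\cdot\operatorname{Td}(\pi_*(L^\vee_{n+1}-1))^{-1}\cdot\operatorname{Td}((\pi_* i_*\mathcal{O}_{\mathcal{Z}})^\vee)^{-1}$. Each factor is a twisting of one of the three types of Section \ref{twcohgwgen}. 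The first is an index-bundle ($\mathcal{A}$-type) twisting by the multiplicative class $\operatorname{Td}$; by the loop-group description it rotates $\mathcal{L}_H$ by an explicit $\operatorname{End}(H^*(X))$-valued Laurent series in $z$ built from the Euler--Maclaurin/Bernoulli expansion of $\log\operatorname{Td}$ applied to $ev^*T_X$ (this is the quantum Riemann--Roch operator of \cite{coatesgiv}). The second is a $\mathcal{B}$-type (kappa) twisting with $F(x)=x$, so $F(L^\vee_{n+1})-F(1)=L^\vee_{n+1}-1$; by the dilaton-shift description it changes only the dilaton shift, and a short computation shows the new shift is $1-e^z=\operatorname{qch}(1-q)$, which is exactly what reconciles the $1-q$ in the definition of $J_{fake}$ with the $-z$ of $J_H$. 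The third is a $\mathcal{C}$-type (nodal) twisting by $\operatorname{Td}(\,\cdot\,)^{-1}$ applied to $(\pi_* i_*\mathcal{O}_{\mathcal{Z}})^\vee$; by the polarization-change description it replaces $\mathcal{H}_-$ by a new Lagrangian complement of $\mathcal{H}_+$, and one identifies this complement with the one dictated on the K-theoretic side by expanding the propagator $1/(1-qL)$ near $q=1$, so that together with the shifted dilaton vector it converts $1/(1-e^{z+\psi})$ into the form imposed by $J_H$.

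Assembling the three factors then yields the theorem: $\operatorname{qch}(\mathcal{L}_{fake})$ is the cone $\mathcal{L}_H$ moved by the above loop-group transformation and re-read with the new dilaton vector and the modified polarization, which are exactly the "ingredients" asserted. Throughout one must keep track of the fact that the Todd factors are a priori power series in $z$, hence live in a completion of $\mathcal{H}$; this is harmless because in every correlator $\operatorname{Td}(T_{0,n,d})$ is a genuine finite cohomology class on $X_{0,n,d}$, so the transformations act compatibly order by order in the Novikov variable $Q$ and in the number of marked points, and each of the three is a genuine symplectomorphism of $\mathcal{H}$ preserving the cone structure.

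The main obstacle is the nodal ($\mathcal{C}$-type) twisting. Unlike the $\mathcal{A}$- and $\mathcal{B}$-twistings, which only involve the universal curve $\pi\colon X_{0,n+1,d}\to X_{0,n,d}$ over a single moduli space, the class $\pi_* i_*\mathcal{O}_{\mathcal{Z}}$ records the splitting behaviour of the curve at its nodes, so proving that its Todd twisting amounts merely to a change of polarization requires the standard recursion expressing the codimension-two locus $\mathcal{Z}$ as a union of fibered products of smaller moduli spaces and a careful generating-function bookkeeping that resums the contributions of chains of rational components; this is where the argument is genuinely delicate. Once the twisted cohomological GW formalism of \cite{coatesgiv}, \cite{tseng}, \cite{to1} is in hand and the virtual-tangent-bundle decomposition of \cite{tom} is granted, the $\mathcal{A}$- and $\mathcal{B}$-parts and the final assembly are essentially formal.
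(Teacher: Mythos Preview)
Your proposal is correct and follows the same decomposition the paper itself sketches: split $\operatorname{Td}(T_{0,n,d})$ according to the three summands of the virtual tangent bundle and process each via the twisted-GW formalism of \cite{coatesgiv}, \cite{to1}, obtaining respectively the loop-group rotation $\triangle$, the dilaton shift $1-q$, and the change of negative polarization space. Note that the paper does not actually give a proof here---it states the result with attribution to \cite{tcag}, \cite{tom} and refers to \cite{to1} for the explicit computations---so your write-up is a faithful reconstruction of the argument in those references rather than a comparison against an in-paper proof.
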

                  
                   \begin{itemize} 
                  \item the cone $\mathcal{L}_{fake}$  is given explicitly in terms of $\mathcal{L}_H$ by
                  \begin{align*}
                                           \operatorname{qch}(\mathcal{L}_{fake})=\triangle \mathcal{L}_H,
                                           \end{align*}
                 where the loop group transformation $\triangle $ is determined only by the characteristic class  $\operatorname{Td}$ and the first summand in the expression of $T_{0,n,d}$.
                  \item the change of dilaton shift in the application point of the $J$-functions of the theories from $-z$ to $\operatorname{qch}(1-q)$ is determined by the class $\operatorname{Td}$ and the second summand in $T_{0,n,d}$.
                  \item the generating series of the fake invariants is considered with respect to a different negative space on $\mathcal{H}$ determined  by the nodal class. More precisely 
                  the negative space $\mathcal{K}_-^1$ of the polarization is  spanned  by elements of the form $\{\Phi_a \frac{q^i}{(1-q)^{i+1}}\}_{i\geq 0}$. One way to  see this is by formally expanding  
                   \begin{align*}
                   \frac{1}{1-qL} = \sum_{i \geq 0} \frac{q^i}{(1-q)^{i+1}}(L-1)^i.
                   \end{align*}
                  The space $\mathcal{H}_-$ on the other hand is spanned by elements $\{\frac{\varphi_a}{z^i}\}_{i\geq 1}$. It is easy to see that the map $\operatorname{qch}$ does not identify them.
                   \end{itemize}
                    We refer the reader to \cite{to1} for explicit computations of this example as well as a treatment in full generality of the formalism of twisted GW theory.
                    
                    It will sometimes be convenient for us to write loop group operators as Euler-Maclaurin asymptotics of infinite products.  
                    \begin{mclaurin}
                    {\em Given a function $x\mapsto f(x)$, the Euler-Maclaurin asymptotics of the product $\prod_{r=1}^{\infty} e^{f(x-rz)}$ is obtained by writing}
                    \begin{align*}
                    \sum_{r=1}^{\infty}f(x-rz) & = (\sum_{r=1}^{\infty} e^{-rz\partial_x})f(x) = \frac{z\partial_x}{e^{z\partial_x}-1}(z\partial_x)^{-1}f(x)\\
                                               & = \frac{\int_0^x f(t)dt}{z}-\frac{f(x)}{2} +\sum_{k\geq 1}\frac{B_{2k}}{(2k)!}f^{(2k-1)}(x)z^{2k-1}. 
                    \end{align*}
                    \end{mclaurin}
                      
                   The operator $\triangle$ in Example \ref{examplefake} is the Euler-Maclaurin asymptotics of
                   \begin{align*}
                   \triangle\sim \prod_i \prod_{r=1}^{\infty} \frac{x_i}{1-e^{-x_i+rz}},
                   \end{align*} 
                  where $x_i$ are the Chern roots of the tangent bundle to $X$.
                  
                   \section{Twisted K-theoretic Gromov-Witten invariants}  \label{sec04}
                    We define twisted K-theoretic GW invariants by inserting in the correlators invertible multiplicative classes of index bundles $E_{n,d}:=\pi_*(ev_{n+1}^*E)$, where $E\in K^0(X)$. 
                    
                    The value of a  general K - theoretic invertible multiplicative class on a bundle $V$ is  
                    \begin{align}
                      \exp (\sum_{l}s_l \psi^l V). \label{multclass}
                      \end{align}
                   We will mainly work with $l<0$ summation range. We treat $s_l$ as formal parameters and expand the ground-ring of the theory 
                   by tensoring it with $\mathbb{C}[[s_1,s_2,\ldots]]$.  
                  
                  Hence the twisted invariants are defined by inserting in the correlators multiplicative classes of $E_{n,d}$
                     \begin{align*}
                   \la \mathbf{t}(L),\ldots ,\mathbf{t}(L)\ra_{0,n,d}^{tw}:= \chi\left(X_{0,n,d}; \mathcal{O}_{n,d}^{vir}\otimes_{i=1}^n \mathbf{t}(L_i)\otimes \exp (\sum_l s_l \psi^l E_{n,d})\right) .
                   \end{align*}
                   The twisted K-theoretic potential is defined as:
                   \begin{align*}
                   \mathcal{F}^{tw} =\sum_{d,n}\frac{Q^d}{n!}\la \mathbf{t}(L),\ldots ,\mathbf{t}(L)\ra_{0,n,d}^{tw} .
                   \end{align*}
                    
                   The J-function of the twisted theory is 
                   \begin{align*}
                  & \mathcal{J}^{tw}:\mathcal{K}_+ \to \mathcal{K}, \\
                  & \mathcal{J}^{tw}(\mathbf{t}(q))=1-q+\mathbf{t}(q) + \sum_{d,n,a}\frac{Q^d}{n!}\Phi^a\la \frac{\Phi_a}{1-qL}, \mathbf{t}(L),\ldots , \mathbf{t}(L) \ra_{0,n+1,d}^{tw} . 
                   \end{align*}
                   
                   \begin{euler1} \label{rescaling}
                   {\em  The bases $\{\Phi_a\}, \{\Phi^a\}$ involved in the definition of $\mathcal{J}^{tw}$ are dual with respect to the} twisted pairing {\em given by}
                   \begin{align*}
                   (\Phi_a, \Phi_b)=\chi(X, \Phi_a\otimes \Phi_b \otimes e^{\sum s_l \psi^l E}).
                   \end{align*}
                   {\em We will  have to consider various twisted theories. As a general rule the pairing  of a twisted theory is given by correlators on $X_{0,3,0}\simeq X$}
                    \begin{align*}
                     (\Phi_a, \Phi_b) = \langle \Phi_a,\Phi_b,1\rangle^{tw}_{0,3,0},
                     \end{align*}
                   {\em where the meaning of $\langle..\rangle^{tw}$ depends on the theory. To relate $J$-functions of different theories we need to regard them as elements of the same loop space. This involves rescaling the elements in loop spaces.}
                  
                   \end{euler1}
                  Let us define the cone $\mathcal{L}_{fake}^{tw}\subset \mathcal{K}^{1}$ the Lagrangian cone of the theory whose correlators are
                        \begin{align*}
                     \la \mathbf{t}(L),\ldots ,\mathbf{t}(L)\ra_{0,n,d}^{fake, tw} :=\int_{[X_{0,n,d}]} \prod_{i=1}^n\operatorname{ch}(\mathbf{t}(L_i))\operatorname{Td}(T_{0,n,d})\exp(\sum s_l \psi^l \operatorname{ch} E_{n,d}) ,
                        \end{align*} 
                        where $T_{0,n,d}$ is the virtual tangent bundle and the product is the cohomological cup product. Notice that the cone $\mathcal{L}_{fake}$ is the twisted fake cone at $s_l=0$. The $J$-function of the theory is 
                     \begin{align*}
                     J^{tw}_{fake}(\mathbf{t}(q))
                                        :=1-q+ \mathbf{t}(q)+ \sum_{d,n,a}\frac{Q^d}{n!}\Phi^a\la \frac{\Phi_a}{1-qL}, \mathbf{t}(L),\ldots , \mathbf{t}(L) \ra_{0,n+1,d}^{fake,tw}. 
                     \end{align*}
                     For now we restrict ourselves and make the following
                       
                        \begin{euler} \label{eu}
                         {\em The twisting class is  the K-theoretic Euler class  $e_K(E_{n,d})$. It is determined by its values on line bundles $e_K(L)=1-L^\vee$. To achieve this we sum after $l<0$ in the multiplicative class (\ref{multclass}) and set $s_l =- s_{-1}/l$.
                         We allow the twisting class to depend formally on one parameter $s_{-1}$ .  At $s_{-1}=-1$  ($\ref{multclass}$) becomes the Euler class.} 
                         \end{euler}
                       \begin{inv}
                       {\em In general $E_{n,d}$ can be written as the difference of two genuine bundles 
                                          $A_{n,d} \ominus B_{n,d}$ on $X_{0,n,d}$ (see \cite{coatesgiv}). We extend the definition of $e_K$ to such objects by working torus-equivariantly - where the action rotates the fibers of $E$. Then $e_K (E_{n,d}) = e_K(A_{n,d})e^{-1}_K(B_{n,d})$. }
                       \end{inv}  
                     
                     \begin{hypers}
                     {\em The case of the Euler class is the main motivation for considering twisted GW invariants: it can be used to relate GW of the ambient space $X$ with GW invariants of a subvariety given by the zero locus of a section of $E$.}
                     \end{hypers}   
                    The image of $\mathcal{J}^{tw}$ is a Lagrangian cone $\mathcal{L}^{tw}$. Our main result describes $\mathcal{L}^{tw}$ in terms of the cone $\mathcal{L}^{tw}_{fake}$.          
                    
                    \begin{positive}
                    {\em As the operators in the theorems are given as sums after $l\geq 1$ we adopt the convention $s_l=s_{-l}$ for all $l$, rather than writing $s_{-l}$ in all formulae.} 
                    \end{positive}
                    
                    \begin{main}\label{thm12}
                   
                    Let $\mathcal{J}_\eta^{tw}$ be the expansion in $(1-q\eta)$  of the twisted $J$-function. Then 
                   \begin{enumerate}
                    \item If $\eta$ is not a root of unity then $\mathcal{J}_\eta^{tw}$ is a power series in $(1-q\eta)$.
                    \item  $\mathcal{J}_1^{tw}$ lies on the cone 
                    \begin{align*}
                    \mathcal{L}_{fake}^{tw} = \exp (\sum_{l \geq 1} s_l \frac{\psi^l E^\vee}{1-q^l})\mathcal{L}_{fake}.
                    \end{align*} 
                   \item Assume $\eta\neq 1$ and that  Assumption  \ref{eu} holds. Let $\mathcal{T}^{tw}_k (\mathbf{f}^{tw})$ be as in  Definition \ref{tang1} but starting with a point $\mathbf{f}^{tw}\in\mathcal{L}_{fake}^{tw}$. 
                     Then
                   \begin{align*}
                   \mathcal{J}^{tw}_\eta (q^{1/k}\eta^{-1}) \in   \exp \sum_{i \geq 1} \left(\frac{\psi^i T_X^\vee}{i(1-\eta^{-i} q^{i/k})}-\frac{\psi^{ik} T_X^\vee}{i(1-q^{ik})} \right) R_\eta R_k^{-1}\mathcal{T}^{tw}_k (\mathcal{J}_1^{tw}(0)),
                   \end{align*} 
                   where $R_k,R_\eta$ are defined by 
                   \begin{align*}
                   & R_k:=\exp \left(\sum_{l\geq 1} s_{lk}\frac{k \psi^{lk} E^\vee}{1-q^{lk} }\right),\\
                   & R_\eta : = \exp\left(\sum_{l \geq 1}s_l \frac{\psi^l E^\vee}{1-q^{l/k} \eta^{-l}}\right) .
                   \end{align*}
                  \end{enumerate}
                     \end{main}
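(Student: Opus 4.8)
My plan is to follow closely the proof of Theorem \ref{mainadelic} in \cite{gito}, carrying the extra twisting class $\exp(\sum_l s_l\psi^l E_{n,d})$ through every step. The common engine is virtual Kawasaki--Riemann--Roch (\cite{kawasaki}, \cite{to_k}): each correlator $\langle\frac{\Phi_a}{1-qL},\mathbf{t}(L),\dots\rangle^{tw}_{0,n+1,d}$ becomes a sum over the Kawasaki strata $(X_{0,n+1,d})_\mu$ of cohomological integrals in which the cotangent line $L=L_1$ is replaced by $\zeta_\mu\,(L|_\mu)$, where $\zeta_\mu$ is the (root of unity) eigenvalue of the generator of the $\mu$-symmetry on $L$, and in which $\operatorname{Tr}$ of the twisting class contributes only eigenvalues of that same symmetry on $E_{n,d}$ --- again roots of unity, with no fresh $q$-dependence. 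Since the only $q$ in $\mathcal{J}^{tw}$ is the one in $\frac{1}{1-qL}$, this confines the poles of $\mathcal{J}^{tw}$ in $q$ to roots of unity, which is (1).

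For (2) I would argue in two steps. First, Laurent-expanding $\mathcal{J}^{tw}$ at $q=1$: the identity-component terms of KRR are by definition the fake twisted invariants and reassemble into $J^{tw}_{fake}$, while the remaining strata contribute the expansions at $q=1$ of the principal parts at the \emph{other} roots of unity, hence power series in $(q-1)$; absorbing these into the input shows $\mathcal{J}_1^{tw}(\mathbf{t}(q))=J^{tw}_{fake}(\tilde{\mathbf{t}}(q))$, so it lies on $\mathcal{L}^{tw}_{fake}$. Second, the formalism of twisted cohomological GW theory (Section \ref{twcohgwgen}, Example \ref{examplefake}) identifies $\mathcal{L}^{tw}_{fake}$: it is obtained from $\mathcal{L}_H$ by the loop group element combining the Todd-twisting of the first summand of $T_{0,n,d}$ (which by itself gives $\triangle$, hence $\mathcal{L}_{fake}$) with the multiplicative twisting $\exp(\sum_l s_l\operatorname{ch}\psi^l E_{n,d})$ of the index bundle $E_{n,d}=\pi_*(ev^*_{n+1}E)$. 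Quantum Riemann--Roch (\cite{coatesgiv}, \cite{to1}), in the Euler--Maclaurin form of Example \ref{examplefake}, computes the latter factor as the asymptotics of $\prod_{r\ge1}\exp\!\big(\sum_l s_l\operatorname{ch}(\psi^l E)\,e^{-lrz}\big)$; summing the geometric series, putting $q=e^z$, using the convention $s_l=s_{-l}$, and combining with the modification of the K-theoretic pairing dictated by Remark \ref{rescaling} (needed to place $\mathcal{J}^{tw}_1$ and $\mathcal{L}_{fake}$ in one symplectic loop space), this factor becomes exactly $\exp(\sum_{l\ge1}s_l\frac{\psi^lE^\vee}{1-q^l})$. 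Hence $\mathcal{L}^{tw}_{fake}=\exp(\sum_{l\ge1}s_l\frac{\psi^lE^\vee}{1-q^l})\mathcal{L}_{fake}$.

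For (3) I would fix a primitive $k$-th root of unity $\eta$, $k\ne1$, and gather the Kawasaki strata whose generator acts on $L_1$ by $\eta^{-1}$. As in \cite{gito}, such a stratum is a finite quotient of a product of moduli spaces $X_{0,m,d'}$ whose universal curve carries a $\mathbb{Z}_k$-action, tied to the universal curve over $X_{0,m,d'}$ by a $k$-fold cyclic cover. The virtual normal bundle is built from the nontrivial $\mathbb{Z}_k$-characters, and $\operatorname{Tr}(\Lambda^\bullet N^\vee)^{-1}$ together with the Todd classes of the stratum reproduces exactly the prefactor $\exp\sum_{i\ge1}\big(\frac{\psi^iT_X^\vee}{i(1-\eta^{-i}q^{i/k})}-\frac{\psi^{ik}T_X^\vee}{i(1-q^{ik})}\big)$ of Theorem \ref{mainadelic}(3), untouched by the twisting since $N$ does not involve $E$. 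What is new is the identification of the series this prefactor is applied to. Here Assumption \ref{eu} enters: the Euler class is multiplicative, so $e_K(E_{n,d})$ restricted to a stratum splits along the $\mathbb{Z}_k$-isotypic decomposition of $\pi_*(ev^*_{n+1}E)$ induced by the cyclic cover. The invariant ($j=0$) summand is an Euler-class twisting of an index bundle over $X_{0,m,d'}$ which, after the normalization $\psi^k\circ S(q^k,Q^k)\circ\psi^{1/k}$ defining $\mathcal{T}^{tw}_k$ (Definition \ref{tang1}), is already present inside $\mathcal{T}^{tw}_k(\mathcal{J}^{tw}_1(0))$ as the operator $R_k=\exp(\sum_{l\ge1}s_{lk}\frac{k\,\psi^{lk}E^\vee}{1-q^{lk}})$ --- the extra factor $k$ being the degree of the cover; the summands with $j\ne0$, whose characters carry $\eta$ and the fractional cotangent-line powers $q^{1/k}$, assemble after Euler--Maclaurin into the genuine twisting $R_\eta=\exp(\sum_{l\ge1}s_l\frac{\psi^lE^\vee}{1-q^{l/k}\eta^{-l}})$. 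Thus the stratum contribution equals the prefactor applied to $R_\eta R_k^{-1}\,\mathcal{T}^{tw}_k(\mathcal{J}^{tw}_1(0))$, and substituting $q\mapsto q^{1/k}\eta^{-1}$ (the identification $\mathcal{K}^\eta\simeq\mathcal{K}^1$) gives the claimed formula.

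I expect the main obstacle to be this last identification in (3): correctly disentangling $e_K(\pi_*(ev^*_{n+1}E))$ restricted to the Kawasaki strata into $\mathbb{Z}_k$-isotypic pieces through the cyclic cover, and checking that after the $\psi^k$-rescaling and the Euler--Maclaurin asymptotics of the moving parts the total twisting correction is \emph{exactly} $R_\eta R_k^{-1}$ --- in particular reproducing the fractional powers $q^{1/k}$, the roots of unity $\eta^{-l}$, and the multiplicity $k$ in $R_k$, and keeping the rescalings of the twisted pairings on the various strata consistent. Parts (1) and (2), by contrast, should be the arguments of \cite{gito} and \cite{coatesgiv}/\cite{to1} with this extra multiplicative class carried along essentially formally.
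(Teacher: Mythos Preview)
Your plan for parts (1) and (2) is essentially the paper's argument: KRR confines poles to roots of unity, the identity stratum reassembles into $J^{tw}_{fake}$ with the remaining principal parts absorbed into the input, and Coates--Givental quantum Riemann--Roch yields the loop group factor $\exp(\sum_{l\ge1}s_l\frac{\psi^lE^\vee}{1-q^l})$ after the pairing rescaling of Remark \ref{rescaling}.

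Part (3), however, has a genuine gap in the mechanism you propose for producing $R_\eta$ and $R_k$. The stem strata are not merely ``finite quotients of products of $X_{0,m,d'}$'' but moduli of maps to the \emph{orbifold} $X\times B\mathbb{Z}_k$, and the correct tool is Tseng's \emph{orbifold} quantum Riemann--Roch \cite{tseng}, not ordinary Euler--Maclaurin. The operators $R_\eta$ and $R_k$ are \emph{not} the contributions of the $j\ne0$ and $j=0$ isotypic pieces of $E_{n,d}$ respectively; rather, each is the rotation of a different \emph{sector} of the Lagrangian cone of $X\times B\mathbb{Z}_k$ (the $g^{-1}$-sector and the identity sector), and \emph{each} involves summing over \emph{all} isotypic pieces $E\otimes\mathbb{C}_{\eta^i}$, $i=0,\dots,k-1$, weighted by the shifted Bernoulli polynomials $B_m(i/k)$ from Tseng's formula. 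The geometric-series identity $\sum_{i=0}^{k-1}\eta^{-il}e^{lzi/k}=\frac{e^{lz}-1}{e^{lz/k}\eta^{-l}-1}$ is what collapses this sum to the displayed $R_\eta$. You also misplace the role of Assumption \ref{eu}: multiplicativity along the isotypic decomposition holds for \emph{any} multiplicative class and is not what singles out the Euler class. The Euler-class hypothesis is needed instead for the identity $R_k=\psi^k R_1$ (equivalent to $s_l=k\,s_{lk}$, satisfied by $s_l=-1/l$), which is what lets one identify $\operatorname{qch}^{-1}(\Box_kR_k\mathcal{L}_H)$ with $\psi^k\mathcal{L}^{tw}_{fake}$ and hence match the identity-sector tangent space with $\mathcal{T}^{tw}_k(\mathcal{J}^{tw}_1(0))$. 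Without these two corrections your computation in (3) will not close.
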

                  
                  \begin{rem}
                  {\em  As the first part of the theorem can be used to describe the tangent space at $0$ to $\mathcal{L}_{fake}^{tw}$ in terms of the cone $\mathcal{L}_{fake}$, Theorem \ref{thm12} gives a complete characterization of the twisted cone in terms of the untwisted one.}
                  \end{rem}
                  
                  \begin{rem11}\label{opform}
                  {\em The operators  $ \exp \left(\sum_{i \geq 1} \frac{\psi^i T_X^\vee}{i(1-\eta^{-i} q^{i/k})}-\sum_{i \geq 1} \frac{\psi^{ik} T_X^\vee}{i(1-q^{ik})} \right)$ and $R_\eta R_k^{-1}$  do not have poles at $q=1$.  For example $R_\eta$ has poles $\frac{s_{kl'}\psi^{kl'} E}{l'}$ at $q=1$ for $l=l'k$, they cancel the poles of $R_k$. Otherwise the last conditions of both Theorems \ref{mainadelic} and \ref{thm12} could not be true because modulo Novikov variables $\mathcal{J}_\eta$ is a power series.} 
                  \end{rem11}

                  \section{The proof of Theorem \ref{thm12}} \label{sec05}
                  We follow the proof of \cite{gito}. 
                  
                  The first condition in the theorem is obvious. For the second statement, let $\tilde{\mathbf{t}}(q) = $ contributions in the $J$-function of poles $\neq 1$. Then we claim that
                  \begin{fakest}
                  \begin{align*}
                  \mathcal{J}_1^{tw}(\mathbf{t}(q)) & = J^{tw}_{fake}(\mathbf{t}(q)+ \tilde{\mathbf{t}}(q))\\
                   & =1-q+ \mathbf{t}(q)+\tilde{\mathbf{t}}(q) + \sum_{d,n,a}\frac{Q^d}{n!}\Phi^a\la \frac{\Phi_a}{1-qL}, \mathbf{t}(L)+\tilde{\mathbf{t}}(L),\ldots , \mathbf{t}(L)+ \tilde{\mathbf{t}}(L) \ra_{0,n+1,d}^{fake,tw}. 
                  \end{align*}
                  \end{fakest}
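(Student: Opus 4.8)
The plan is to mimic the argument of the corresponding step in \cite{gito}, the only new ingredient being that the virtual Kawasaki--Riemann--Roch integrand now carries a characteristic class of the index bundle $E_{n+1,d}=\pi_*(ev^*_{n+1}E)$. First I would apply virtual KRR, in the form established in \cite{to_k}, to each correlator $\la \frac{\Phi_a}{1-qL}, \mathbf{t}(L),\ldots ,\mathbf{t}(L)\ra^{tw}_{0,n+1,d}$ occurring in $\mathcal{J}^{tw}(\mathbf{t}(q))$. This rewrites it as a sum over the Kawasaki strata of $X_{0,n+1,d}$ of cohomological integrals, each with integrand $\operatorname{Td}$ of the virtual tangent bundle of the stratum times $\operatorname{ch}\operatorname{Tr}$ of $\mathcal{O}^{vir}\otimes\frac{\Phi_a}{1-qL}\otimes\prod_i\mathbf{t}(L_i)\otimes\exp(\sum_l s_l\psi^l E_{n+1,d})$, divided by $\operatorname{ch}\operatorname{Tr}(\Lambda^\bullet N^\vee_\mu)$. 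The identity stratum contributes, by construction, exactly the fake-twisted correlator $\la\cdots\ra^{fake,tw}_{0,n+1,d}$, and so assembles into the explicit summation in the statement.

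Second, I would sort the remaining strata by the eigenvalue $\zeta$ with which the generic symmetry $g$ acts on the cotangent line $L_1$ at the first marked point. On such a stratum $L_1$ is pulled back from a genuine line bundle $\bar L_1$ with $g$ acting by $\zeta$, so $\operatorname{ch}\operatorname{Tr}(\frac{\Phi_a}{1-qL_1})=\frac{\operatorname{ch}\Phi_a}{1-q\zeta e^{\psi_1}}$, whose partial-fraction expansion in $q$ has its only pole at $q=\zeta^{-1}$. Thus if $\zeta\neq 1$ the stratum is regular at $q=1$ and its contribution is part of $\tilde{\mathbf{t}}(q)$, not of $\mathcal{J}^{tw}_1$; the strata feeding $\mathcal{J}^{tw}_1$ are therefore the identity stratum together with those on which $g$ acts trivially on $L_1$. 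Since a finite-order automorphism of $\mathbb{P}^1$ fixing a point and acting trivially on the tangent line there is the identity, in that case $g$ acts trivially on the component of $C$ carrying $x_1$; let $C_0\ni x_1$ be the maximal connected subcurve of $C$ on which $g$ acts trivially.

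Third, I would analyse such a stratum geometrically: the source curve splits as $C_0$ — carrying $x_1,\ldots ,x_{n+1}$, with trivial symmetry — with ``tails'' glued to $C_0$ at extra nodes, the tails forming the locus on which $g$ acts nontrivially, and the degree and Novikov variables splitting accordingly. Restricting the integrand, the virtual structure sheaf and its $\operatorname{Td}$-class split off the contribution of the moduli of stable maps parametrising $C_0$ (with $x_1,\ldots ,x_{n+1}$ and one extra marked point per attaching node) from tail factors and node factors, exactly as in \cite{gito}; and — this is where the twisting enters — the index bundle $E_{n+1,d}$ decomposes, via the normalisation sequences for $C=C_0\cup(\text{tails})$, as the index bundle of the $C_0$-part, plus index bundles on the tails, minus node corrections involving $ev^*E$ at the attaching nodes, so that $\operatorname{ch}\operatorname{Tr}\exp(\sum_l s_l\psi^l E_{n+1,d})$ factors as the fake-twisting class of the $C_0$-piece times tail twisting classes times node corrections — the same kind of splitting used for the cohomological twisted theory of \cite{coatesgiv}, \cite{to1}. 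The $C_0$-piece is then precisely the integrand of a fake-twisted correlator with $n+1+m$ marked points, $x_1$ still carrying $\frac{\Phi_a}{1-qL_1}$. Summing over all tail configurations, with the standard bookkeeping of the $1/n!$ and of the tail automorphisms, turns the tail-and-node data at each node of $C_0$ into the insertion there of $\tilde{\mathbf{t}}(L_{\mathrm{node}})$ — indeed $\tilde{\mathbf{t}}(q)$ is by definition the sum of the pole-at-$\zeta\neq 1$ contributions of $\mathcal{J}^{tw}$, which is exactly the local picture of a tail glued to the $g$-fixed component $C_0$ at a node where $g$ acts nontrivially on the branch. Expanding $\prod_i(\mathbf{t}(L_i)+\tilde{\mathbf{t}}(L_i))$ over subsets of marked points then matches term by term, and what is left is the expansion at $q=1$ of $J^{tw}_{fake}(\mathbf{t}(q)+\tilde{\mathbf{t}}(q))$; the dilaton shift $1-q$ and the pairing agree on the two sides, the fake-twisted pairing on $X_{0,3,0}\simeq X$ being $\chi(X,\Phi_a\otimes\Phi_b\otimes e^{\sum_l s_l\psi^l E})$ by Hirzebruch--Riemann--Roch, as in Remark \ref{rescaling}.

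The main obstacle is the third step: one must check that the characteristic class of the index bundle restricts compatibly along the decomposition of a Kawasaki stratum into its principal component, its tails and the attaching nodes, and that after re-summation the tail contributions reproduce $\tilde{\mathbf{t}}(q)$ and nothing more. The decomposition of $\pi_*(ev^*E)$ under normalisation, together with the behaviour of $\operatorname{Tr}$ and of the multiplicative class, has to be tracked exactly as in \cite{coatesgiv}, \cite{to1}; the only genuinely new point is that the $g$-eigenvalue decomposition of $E_{n+1,d}$ on a stratum is compatible with its decomposition along the components of the source curve. Once this is granted, the re-summation identity is the same formal manipulation as in \cite{gito}.
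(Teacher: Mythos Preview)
Your proposal is correct and follows the same route as the paper's proof: apply virtual KRR, isolate the strata on which the symmetry acts trivially on the component carrying $x_1$, invoke the factorization of $\psi^l\pi_*ev^*E$ over nodal strata (the paper quotes the identity $i^*(\psi^l\pi_*ev^*E)=p_1^*(\psi^l\pi_*ev^*E)+p_2^*(\psi^l\pi_*ev^*E)-\psi^l ev_{node}^*E$ from \cite{coatesgiv}, with the node correction absorbed by the twisted pairing), and re-sum the tails into $\tilde{\mathbf{t}}$-insertions. One minor slip: $C_0$ need not carry all of $x_2,\ldots,x_{n+1}$ --- marked points may lie on tails --- but these are absorbed into $\tilde{\mathbf{t}}$ upon re-summation, consistent with your own final expansion of $\prod_i(\mathbf{t}(L_i)+\tilde{\mathbf{t}}(L_i))$.
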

                  \begin{proof}
                  This is completely analogous to the untwisted case: the contributions in the $J$-function with poles at $q=1$ correspond to Kawasaki strata  where the symmetries act trivially on the irreducible component - call it $C_+$ - carrying the distinguished marked point $x_1$. Such irreducible components can carry other special points - marked points or nodes. Let $p$ be such a node and call $C_-$ the irreducible component which intersects $C_+$ at $p$. The Euler class of the normal direction of the Kawasaki stratum  which smoothens the node $p$ is $(1-L_+L_-)$, where $L_+, L_-$ are cotangent line bundles at $p$ to the respective branches. The contribution in KRR coming from this normal direction is
                   \begin{align*}
                    \frac{1}{1-\operatorname{ch}(L_+)\operatorname{ch}(\operatorname{Tr}(L_-))}.
                   \end{align*}
                   Notice that the symmetry can not act with eigenvalue $1$ on $L_-$ otherwise we could smoothen the node while staying in the same Kawasaki stratum (equivalently the class in the denominator would be nilpotent).  
                   
                   Moreover the twisting class factorizes "nicely" over nodal strata, i.e. 
                  if $i$ is the inclusion of a divisor $X_{0,n_1+1,d_1}\times_X X_{0,n_2+1,d_2}$ parametrizing nodal curves in $X_{0,n,d}$ and $p_1, p_2$ the projections on the two factors the following holds (see \cite{coatesgiv}): 
                   \begin{align*}
                   i^* (\psi^l \pi_*(ev^* E))= p_1^* (\psi^l \pi_*(ev^* E)) +p_2^*(\psi^l\pi_*(ev^* E))- \psi^l ev_{node}^*E .
                   \end{align*}
                    The third summand is absorbed by the pairing at the node (see Remark \ref{rescaling}), the other two ensure that the twisting class distributes 
                    on the factors as twisting classes of the same form.
                     
                   This shows that the insertion in the correlators corresponding to the node $p$ comes from $\tilde{\mathbf{t}}(L_+)$. In fact when we sum after all possibilities of degrees and number of marked points of curves $C_-$ the insertion becomes  $\tilde{\mathbf{t}}(L_+)$. For a marked point on $C_+$ the insertion is $\mathbf{t}(L)$ hence the generating series $\mathcal{J}_1^{tw}(\mathbf{t}(q))$ is of the form 
                 \begin{align*} 
                  \mathcal{J}_1^{tw}(\mathbf{t}(q)) &=1-q+ \mathbf{t}(q)+\tilde{\mathbf{t}}(q) + \sum_{d,n,m,a}\frac{Q^d}{n!m!}\Phi^a\la \frac{\Phi_a}{1-qL}, \mathbf{t}(L),\ldots ,  \tilde{\mathbf{t}}(L) \ra_{0,n+m+1,d}^{fake,tw},
             \end{align*}
   where there are $n$ insertions of $\mathbf{t}(L)$  and $m$ insertions of  $\tilde{\mathbf{t}}(L)$ in the correlators. 
   Keeping in mind that there are $\binom{n+m}{m}$ ways of choosing the $n$ marked points among the $n+m$ special points we can rewrite  $\mathcal{J}_1^{tw}(\mathbf{t}(q))$ as
                    \begin{align*}
   \mathcal{J}_1^{tw}(\mathbf{t}(q)) = J^{tw}_{fake}(\mathbf{t}(q)+ \tilde{\mathbf{t}}(q)),
                  \end{align*}
 and hence $\mathcal{J}_1^{tw}$ lies on the fake twisted cone $\mathcal{L}^{tw}_{fake}$. 
                \end{proof}
                   The correlators $\langle ..\rangle^{fake,tw}$  are obtained from  $\langle ...\rangle^{fake}$ by inserting one more multiplicative characteristic class
                  \begin{align*}
                  \operatorname{ch}\left[\exp(\sum s_l \psi^l E_{n,d})\right].
                  \end{align*}
            This means $\mathcal{L}^{tw}_{fake}$ is obtained from $\mathcal{L}_{fake}$ by applying a loop group transformation, which we compute explicitly below.
            
            Let us extend the $\psi^l$ operations on cohomology using the Chern character.
                   It reads $\psi^l \varphi = l^j \varphi$ for $\varphi\in H^{2j}(X)$. Hence 
                   \begin{align}
                   \operatorname{ch}\left[\exp(\sum s_l \psi^l E_{n,d})\right] = \exp \left(\sum_{j\geq 0}(\sum_{l<0}s_l l^j) \operatorname{ch}_j E_{n,d}\right). \label{tw1234}
                   \end{align}
                  According to \cite{coatesgiv} the cone of a theory twisted by a general multiplicative characteristic class of the form 
                  \begin{align*}
                  \exp(\sum w_j \operatorname{ch}_j E_{n,d}) 
                  \end{align*} 
                  is obtained from the cone of the untwisted theory by applying the operator
                  \begin{align*}
                  \sum_{m,j\geq 0} w_{2m-1+j}\frac{B_{2m}}{(2m)!} \operatorname{ch}_j E \cdot z^{2m-1}.
                  \end{align*}
                 Here the Bernoulli numbers $B_{2m}$ are defined by 
                  \begin{align*}
                  \frac{t}{1-e^{-t}} = 1+ \frac{t}{2}+ \sum_{m\geq 1}\frac{B_{2m}}{(2m)!}t^{2m}. 
                  \end{align*}
                  We apply this to our twisting class $(\ref{tw1234})$ and we extract the coefficient of $s_l$ in the corresponding loop group transformation:
                   \begin{align*}
                  & \sum_{m,j\geq 0}l^{2m-1+j}\frac{B_{2m}}{(2m)!} \operatorname{ch}_j E \cdot z^{2m-1} =  \\
                  &  = \sum_{m,j\geq 0} l^j \operatorname{ch}_j E \frac{B_{2m}}{(2m)!}(lz)^{2m-1} = \\
                   &= \sum_{j\geq 0}\psi^l \operatorname{ch_j} E\left(\frac{lz}{lz(1-e^{-lz})} -\frac{1}{2}\right) =\\ 
                   & = \frac{\psi^l E}{1-e^{-lz}} - \frac{\psi^l E}{2}  = \frac{\psi^{-l} E^\vee}{1-q^{-l}} - \frac{\psi^{l}E}{2}.
                   \end{align*} 
                    The second summand is killed when we identify loop spaces (see Remark \ref{rescaling}). The first summand agrees  with the operator in part $(2)$ of the Theorem \ref{thm12}.
                   
                  We now proceed to prove part $(3)$ of Theorem \ref{thm12}: let $\eta$ be a primitive root of unity of order $k\neq 1$. The Kawasaki strata in
                  $X_{0,n,d}$ which give contributions with poles at $q= \eta^{-1}$ in the $J$-function were called {\em stem spaces} in \cite{gito}.
                They parametrize maps whose restriction  to the component $C_+$ carrying the first marked point factor through degree $k$ covers $z\mapsto z^k$. These maps can be identified with stable maps to the orbifold $X\times B\mathbb{Z}_k$ (of degree $k$ times less). The only points fixed by automorphisms of such maps are $0,\infty \in C_+$. However we can encounter $k$-tuples of nodes permuted by the symmetry. Let $(C_1,..,C_k)$ be the curves adjacent to these nodes: then the restriction of the stable map to $C_i$ have to be isomorphic and moreover $C_i$ are not allowed to carry marked points, as they have to be fixed by the symmetry.

           Hence the contributions in the $J$-function $\mathcal{J}$ with poles at $q=\eta^{-1}$ are cohomological integrals on the moduli spaces of maps to $X\times B\mathbb{Z}_k$ involving certain multiplicative characteristic classes coming from the tangent and normal directions to the Kawasaki strata and from the  index twisting (\ref{multclass}). It turns out these tangent and normal directions can be expressed in terms of 
       the universal families over the moduli spaces $(X\times B\mathbb{Z}_k)_{0,n,d}$, which we denote by $p$. Let $\mathbb{C}_{\eta^i}$ be the line bundle on  $X\times B\mathbb{Z}_k$ which is topologically trivial and on which $g$ acts as multiplication by $\eta^i$, for $i=0,1...k-1.$ 
       
       We define {\em the twisted stem} theory to be the cohomological GW theory of the target orbifold $X\times B\mathbb{Z}_k$ twisted by all the classes which contribute in the KRR formula applied to $\mathcal{J}^{tw}$.  We now list the classes:
                 \begin{itemize}
                 \item  the summand $\pi_*(ev_{n+1}^* T_X)$ of $T_{0,n,d}$ contributes the class
                                  \begin{align}
                                    \operatorname{Td}(p_* (ev^* T_X ))\prod_{i=1}^{k-1}\operatorname{Td}_{\eta^i}( p_*(ev^* (T_X\otimes \mathbb{C}_{\eta^i}))), \label{tloop}
                                    \end{align} 
                                    where
                                      \begin{align*}
                                     \operatorname{Td}_\lambda (L) =\frac{1}{1-\lambda e^{-c_1(L)}} . 
                                      \end{align*} 
                 \item  the summand $\operatorname{Td}(p_* (1-L_{n+1}^\vee))$ of $T_{0,n,d}$ contributes the class
                                      \begin{align}
                                     \operatorname{Td}(p_* (1-L^\vee))\prod_{i=1}^{k-1}\operatorname{Td}_{\eta^i}(p_*((1-L^{\vee})\otimes ev^*\mathbb{C}_{\eta^i})). \label{tdilaton} 
                                      \end{align}

                 \item the nodal contributions in KRR formula differ depending on the type of node. Denote by $\mathcal{Z}_g$ the nodes which can be smoothed within the same Kawasaki stratum and by $\mathcal{Z}_0$ the non-stacky nodes (these are disjoint from $\mathcal{Z}_g$). Then the nodal twisting is given by
                 \begin{align}
 \operatorname{Td}(-(p_*i_*\mathcal{O}_{Z_g})^\vee)\operatorname{Td}(-(p_*i_*\mathcal{O}_{Z_0})^\vee)\prod_{i=1}^{k-1}\operatorname{Td}_{\eta^i}(-(p_*i_*\mathcal{O}_{Z_0} \otimes ev^*\mathbb{C}_{\eta^i})^\vee).\label{tnodes}
 \end{align}
                 \item   the class (\ref{multclass}) contributes 
                            \begin{align}
                                    \operatorname{ch}\circ \operatorname{Tr}\left[\exp \left(\sum s_l \psi^l \pi_* ev^* E \right)\right].\label{arrtwist}
                                    \end{align}
                 
                 \end{itemize}
                 The first three types of twisting classes are present in \cite{gito}(Section $8$), where it is explained why they account for the tangent and normal directions to Kawasaki strata. We will express the class (\ref{arrtwist}) as a pushforward along $p$ in Proposition \ref{rcomp}.
      
       We denote the correlators of the twisted stem theory by $\langle .. \rangle^{stem,tw}$. The $J$ function of the theory is
                $$ \mathcal{J}^{st,tw}( \mathbf{t}(z))=-z+\mathbf{t}(z) +\sum_{d,n,a}\frac{Q^d}{n!}\varphi^a\la \frac{\varphi_a}{-z-\psi}, \mathbf{t}(\psi),\ldots , \mathbf{t}(\psi)\ra^{stem,tw}_{0,n+1,d} .$$
                
    Hence the polar part of $\mathcal{J}^{tw}_\eta(q)$ comes from correlators of the twisted stem theory. Let us denote by $\bar{\mathbf{t}}^{tw}(q)$ the contributions in the twisted $J$-function not having poles at $\eta^{-1}$. Then we claim that
                  \begin{stemtw}
                   \begin{align*}
                   \mathcal{J}^{tw}_\eta(q) = \bar{\mathbf{t}}^{tw}(q) +\sum_{n,d,a}\frac{Q^{dk}}{n!}\Phi_a\la\frac{\Phi^a}{1-q\eta L^{1/k}}, \mathbf{T}(L), \ldots , \mathbf{T}(L), \bar{\mathbf{t}}^{tw}(\eta^{-1}L^{1/k}) \ra^{stem, tw}_{0,n+2,d},
                   \end{align*}
                   {\em where the evaluation maps at the marked points land in components of $IB\mathbb{Z}_k$ labeled by the sequence $(g,1,..,1,g^{-1})$  and $\mathbf{T}(L) =\psi^k \widetilde{\mathbf{T}}(L)$, with   $\widetilde{\mathbf{T}}(q)=\mathcal{J}^{tw}_1(0)$}.
                  \end{stemtw}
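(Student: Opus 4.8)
The plan is to follow \cite{gito}, inserting the index twisting $\exp(\sum_l s_l\psi^l E_{n,d})$ (the multiplicative class (\ref{multclass}) evaluated on the index bundle) into the bookkeeping of the Kawasaki--Riemann--Roch argument. First, apply virtual KRR (\ref{eqn:KRR}) to each correlator $\la\tfrac{\Phi_a}{1-qL},\mathbf{t}(L),\ldots,\mathbf{t}(L)\ra^{tw}_{0,n+1,d}$ occurring in $\mathcal{J}^{tw}$. A summand of the KRR sum has a pole at $q=\eta^{-1}$ exactly when the symmetry $g$ acts on the cotangent line $L_1$ with eigenvalue $\eta$, and the corresponding Kawasaki strata are the stem spaces: the restriction of the map to the component $C_+$ through $x_1$ factors through the degree-$k$ cover $z\mapsto z^k$, and the stratum is identified with a moduli space of stable maps to $X\times B\mathbb{Z}_k$ of $k$ times smaller $X$-degree. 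The virtual structure sheaf together with the virtual tangent and normal directions to the stratum reassemble, precisely as in \cite{gito}, Section~8, into the classes (\ref{tloop}), (\ref{tdilaton}), (\ref{tnodes}), while the restriction of $\exp(\sum_l s_l\psi^l E_{n,d})$ produces the class (\ref{arrtwist}) (the operator $\operatorname{Tr}$ enters because $E_{n,d}$ splits into $g$-eigenbundles along the stratum). By definition these four classes are the twisting of the twisted stem theory, so every polar contribution is a correlator $\la\cdots\ra^{stem,tw}$.

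Next I would distribute the special points among the marked points of the stem correlator. Only the ramification points $0,\infty\in C_+$ of $z\mapsto z^k$ are fixed by $\langle g\rangle$, so the distinguished marked point $x_1$ sits at one of them, say $0$; it becomes the first stem marked point, with monodromy $g$, and comparing the cotangent line $L_1$ on $C_+$ at $0$ with the cotangent line $L$ on the coarse curve $\bar{C}_+$ replaces $qL_1$ by $q\eta L^{1/k}$, exactly as in \cite{gito}. Every other special point of $C_+$ occurs in an orbit of size $k$ under the cyclic symmetry $\langle g\rangle$, and since marked points must be fixed, the remaining marked points $x_2,\ldots,x_{n+1}$ can only sit on subtrees attached to $C_+$. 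Summing over the subtrees attached at $\infty$, together with the tree-free KRR contributions there, yields the last stem insertion $\bar{\mathbf{t}}^{tw}(\eta^{-1}L^{1/k})$ (whose argument reflects the ramification at $\infty$) and the non-polar term $\bar{\mathbf{t}}^{tw}(q)$ sitting outside the correlator; the subtrees glued along such a $k$-orbit of nodes carry no $x_i$, and summing over them gives, by the same reasoning as for the tails in the proof of Proposition \ref{fakest}, a single insertion equal to the $q=1$ expansion of the twisted $J$-function at input $0$, seen through the cyclic $\mathbb{Z}_k$-symmetry, which in K-theory produces the Adams operation $\psi^k$; this is $\mathbf{T}(L)=\psi^k\widetilde{\mathbf{T}}(L)$ with $\widetilde{\mathbf{T}}(q)=\mathcal{J}^{tw}_1(0)$. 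The degree-$k$ cover together with these $k$-orbits of nodes accounts for the Novikov power $Q^{dk}$, and the orbit nodes are non-stacky on $\bar{C}_+$, producing the monodromy pattern $(g,1,\ldots,1,g^{-1})$; in particular the correlators are over $(X\times B\mathbb{Z}_k)_{0,n+2,d}$.

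The only ingredient not already in \cite{gito} is that the class (\ref{arrtwist}) is compatible with this decomposition of the stem curve, which I would deduce from the nodal splitting of index bundles already used in the proof of Proposition \ref{fakest}: if $i$ embeds the boundary divisor separating $C_+$ (with its stem structure) from a subtree $C_-$ meeting it at a node, then
\begin{align*}
i^*\bigl(\psi^l\pi_*(ev^*E)\bigr)=p_1^*\bigl(\psi^l\pi_*(ev^*E)\bigr)+p_2^*\bigl(\psi^l\pi_*(ev^*E)\bigr)-\psi^l ev_{node}^*E,
\end{align*}
the last summand being absorbed by the twisted pairing at the node (Remark \ref{rescaling}). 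Iterating this at every $k$-orbit of nodes and at $\infty$, the part of (\ref{arrtwist}) supported on $C_+$ is exactly the index twisting of the stem theory (its explicit form as a pushforward along $p$ being the content of the upcoming Proposition \ref{rcomp}), while the parts supported on the subtrees are the ordinary K-theoretic index twisting of the fake theory; this is precisely why those subtrees contribute points of $\mathcal{L}^{tw}_{fake}$, consistently with the previous step. Combining the three steps yields the claimed identity.

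The hardest part is the second step: matching the combinatorics of stem strata — ramification at $0$ and $\infty$, $k$-orbits of nodes, marked points forbidden on fixed loci, the resulting cotangent-line comparisons, and the $\psi^k$/Novikov bookkeeping — with the precise shape of the stem correlator. The genuinely new input, the splitting of the index twisting (\ref{arrtwist}) over stem strata, is by contrast immediate from the displayed formula, being formally the same splitting already used for the pole at $q=1$.
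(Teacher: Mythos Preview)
Your proposal is correct and follows essentially the same route as the paper's proof: both reduce to \cite{gito}, Section~7, Proposition~2, after checking that the index twisting (\ref{arrtwist}) factorizes over nodal strata via the same splitting formula already used in Proposition~\ref{fakest}. Your write-up is considerably more detailed than the paper's concise outline (in particular you spell out the KRR bookkeeping and the cotangent-line comparisons more explicitly), but the architecture---stem spaces as maps to $X\times B\mathbb{Z}_k$, the insertions at $0$ and $\infty$, the legs at $k$-orbits of nodes producing $\psi^k\widetilde{\mathbf{T}}$ via the trace identity $\operatorname{Tr}(g\vert V^{\otimes k})=\psi^k V$, and the nodal splitting of $\psi^l\pi_*(ev^*E)$---is identical.
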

                  In the above $\psi^k $ acts on cotangent line bundles $L\mapsto L^k$, elements of $K^0(X)$ and on Novikov variables $Q^d\mapsto Q^{kd}$.
                  \begin{proof}
 As already mentioned  the twisting class factorizes "correctly" over strata of  symmetries, hence the proposition is completely analogous to the same statement in \cite{gito}(Section $7$, Proposition $2$). We give a concise outline below.

Recall that stem spaces parametrize maps $C_+\to C \to X$, where the first map is $z\mapsto z^k$. The first 
                  and last seats in the correlators are $0,\infty\in C_+$ which are fixed by the symmetry. The insertion $\frac{1}{1-q\eta L^{1/k}}$ 
                 in the first seat occurs because $\operatorname{Tr}(L_1)=\eta L_1 $ and the cotangent lines on the cover and quotient curve differ by a power of $k$.  Summing after all possibilities  for $\infty$ ( it can be a node, a marked point or a non-special point of $C_+$) gives the insertion $\bar{\mathbf{t}}^{tw}(\eta^{-1}L^{1/k})$ for the last seat in the correlators.
                 
                       Let us explain the statement about the input $\mathbf{T}(L)$, to which we will refer as {\em the leg}: these are nodes on the quotient curve whose preimages on the cover are $k$-tuples of nodes connecting $C_+$ with curves $(C_1,...,C_k)$ which do not carry marked points . The maps $C_i \to X$, $i=1,..k$ are isomorphic.
                         Summing after all possibilities of degrees of maps   $C_i\to X$ we get contributions $\mathcal{J}^{tw}_1(0)$ for each such node.  Notice that on the cover curve there are $k$ copies of cotangent line bundles at the $k$ nodes to $C_i$, whereas on the quotient curve only one such cotangent line. Hence one needs to compute the trace of $\mathbb{Z}_k$ on the tensor product of the $k$ cotangent line bundles, where the generator $g\in \mathbb{Z}_k$ permutes the factors. 
                         The statement $\mathbf{T}(L) =\psi^k \widetilde{\mathbf{T}}(L)$ follows from the fact that for such an action of $\mathbb{Z}_k$ on the $k$-th power of a vector space $V$ we have $\operatorname{Tr}(g\vert V^{\otimes k}) = \psi^k V$  (see \cite{gito},Lemma in Section $7$).      
  \end{proof}

                       Hence
                     $\mathcal{J}^{tw}_\eta( \eta^{-1}q^{\frac{1}{k}})$ is obtained from a tangent vector to the cone of the twisted stem theory of $X\times B\mathbb{Z}_k$:
                     \begin{align}
                     \delta \mathcal{J}^{st,tw}(\delta \mathbf{t}, \mathbf{T}):=\delta \mathbf{t}(q^{1/k})+\sum_{n,d,a}\frac{Q^d}{n!}\la\frac{\Phi^a}{1- q^{1/k}L^{1/k}}, \mathbf{T}(L),....,\mathbf{T}(L),\delta \mathbf{t}(L^{1/k}) \ra^{stem,tw}_{0,n+1,d}, \label{st1}
                     \end{align} 
                     after changing $Q^d\mapsto Q^{dk}$ (but not in $\delta\mathbf{t}$).
             
              The Lagrangian cone of the cohomological GW theory of $X\times B\mathbb{Z}_k$ is the product of $k$ copies of Lagrangian cone of the GW theory of $X$. We will refer to each copy  as a {\em sector}. They are labeled by elements of $B\mathbb{Z}_k$ or equivalently connected components of $IB\mathbb{Z}_k$. 
              The tangent cone is accordingly a direct sum of $k$ copies of tangent spaces. Our tangent vector  $\delta \mathcal{J}^{st,tw}(\delta \mathbf{t}, \mathbf{T})$
              has application point in the sector labeled by $1$ of the cone but is tangent in the direction labeled by $g^{-1}$. 
               
                 To locate $\delta \mathcal{J}^{st,tw}(\delta \mathbf{t}, \mathbf{T})$ we process the classes involved in the twisted stem theory  according to the formalism of twisted cohomological GW theory of $X\times B\mathbb{Z}_k$.
                   \begin{itemize}
                   \item The class (\ref{tloop}) rotates the sectors labeled by $1$ , $g^{-1}$ by operators $\Box_k, \Box_\eta$. If $x_i$ are Chern roots of $T_X$ then they  are defined as asymptotics of the infinite products 
                    \begin{align*}
                    &\Box_k \sim \prod_i \prod_{r=1}^{\infty} \frac{x_i - rz}{1-e^{-kx_i+rkz}},\\
                    &\Box_\eta \sim \prod_i \prod_{r=1}^{\infty}\frac{x_i-rz}{1-\eta^{-r}e^{-x_i +rz/k}}.
                    \end{align*}
                    \item The class (\ref{tdilaton})
                       contributes to the change of dilaton shift which becomes $1-q^k$.
                      \item the nodal class (\ref{tnodes}) contributes a change of polarization. In the sector labeled by the identity the new polarization is given by expanding
                      \begin{align*}
                      \frac{1}{1-q^k L^k},
                      \end{align*}
                      whereas in the sector labeled by $g^{-1}$ is given by 
                      \begin{align*}
                      \frac{1}{1-q^{1/k}L^{1/k}}.
                      \end{align*}
                   \item  The index twisting class (\ref{arrtwist}) rotates the cone according to Proposition \ref{rcomp} below.
                  
                   \end{itemize}
                  \begin{identification}
                    {\em  The operator $\Box_\eta\Box_k^{-1}$ almost equals the operator in conditions 3 of Theorems \ref{mainadelic},\ref{thm12}. To see this (assume $x_i=x$) recall that $q=e^z$ and write}
                    \begin{align*}
                    ln (\Box_\eta\Box_k^{-1}) & = -\left(\sum_{r\geq 1} \operatorname{ln} (1-\eta^{-r}e^{-kx}e^{rz/k})\right)+\sum_{r\geq 1}\operatorname{ln}(1-e^{-kx}e^{rkz})\\
                     & = -\sum_{r\geq 1}\int \frac{\eta^{-r}e^{-x}e^{rz/k}}{1-\eta^{-r}e^{-x}e^{rz/k}} dx +\sum_{r\geq 1}\int \frac{k e^{-kx}e^{rkz}}{1 -e^{-kz}e^{rkz}}dx\\
                     & = -\sum_{r \geq 1}\int \left(\sum_{i\geq 1} \eta^{-ir}e^{-ix}e^{riz/k} \right)dx +\sum_{r\geq 1}\int \left(\sum_{i \geq 1}k e^{-ikx}e^{rikz}\right)dx\\
                     & = \int \sum_{i \geq 1} e^{-ix} \frac{\eta^{-i}e^{iz/k}}{1- \eta^{-i}e^{iz/k}} dx +\int \sum_{i \geq 1}k e^{-ikx} \frac{e^{ikz}}{1-e^{ikz}}dx\\
                     & = \sum_{i\geq 1} \frac{\psi^i T_X^\vee}{i(1-\eta^{-i}q^{i/k})} -\sum_{i\geq 1}\frac{\psi^{ik}T_X^\vee}{i(1-q^{ik})} +\operatorname{ln}\frac{1-T_X^\vee}{1-\psi^k T_X^\vee}.
                    \end{align*}
                    \end{identification}
                  
                  The constant factor $\frac{1-T_X^\vee}{1-\psi^k T_X^\vee}$ is absorbed by the change of pairing when identifying loop spaces as explained  in Remark \ref{rescaling}. We will ignore it from now on and slightly abusively write 
                   \begin{align*}
                   \Box_\eta\Box_k^{-1} = \exp\left(\sum_{i\geq 1} \frac{\psi^i T_X^\vee}{i(1-\eta^{-i}q^{i/k})} -\sum_{i\geq 1}\frac{\psi^{ik}T_X^\vee}{i(1-q^{ik})}\right).
                   \end{align*}
                  We now express  the class \eqref{arrtwist} in terms of the universal family $p$ and compute its effect on the cone.
                  
                   \begin{cohstem} \label{rcomp}
                  {\em  Twisting by the class \eqref{arrtwist} rotates the sector labeled by $g^{-1}$ of the Lagrangian cone of $X\times B\mathbb{Z}_k$ by }
                   \begin{align}
                   R_\eta : = \exp\left(\sum_{l \geq 1}s_l \frac{\psi^l E^\vee}{1-q^{l/k} \eta^{-l}}\right) . \label{looptrg}
                   \end{align}
                  {\em The sector labeled by the identity is rotated by}
                   \begin{align}
                   R_k:=\exp \left(\sum_{l\geq 1} s_{lk}\frac{k \psi^{lk} E^\vee}{1-q^{lk}}\right).
                   \end{align} 
                   \end{cohstem}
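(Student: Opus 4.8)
The plan is to express the class \eqref{arrtwist} as a characteristic class of an index bundle pushed forward along $p$, and then to feed it into the twisted (orbifold) Gromov-Witten formalism of \cite{coatesgiv}, \cite{tseng}, reading off its effect sector by sector on the Lagrangian cone of $X\times B\mathbb{Z}_k$ exactly as was done for the untwisted stem classes \eqref{tloop}--\eqref{tnodes} in \cite{gito} and for the fake index twisting in the proof of part $(2)$ of Theorem \ref{thm12} above.

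First I would identify the index bundle over a stem space. The distinguished component $C_+$ is a $\mathbb{Z}_k$-Galois cover $\nu\colon C_+\to\mathcal{C}_+$ of the universal orbifold curve over $(X\times B\mathbb{Z}_k)_{0,n,d}$, and the evaluation $ev_{n+1}$ restricted to $C_+$ factors as $ev\circ\nu$ with $ev\colon\mathcal{C}_+\to X$; since $\nu_*\mathcal{O}_{C_+}=\bigoplus_{j=0}^{k-1}ev^*\mathbb{C}_{\eta^j}$ is the regular representation decomposed into characters, the restriction of $E_{n,d}$ to the $C_+$-part equals
\begin{align*}
\pi_*(ev_{n+1}^*E)=p_*\,\nu_*\nu^*(ev^*E)=\bigoplus_{j=0}^{k-1}p_*\bigl(ev^*(E\otimes\mathbb{C}_{\eta^j})\bigr),
\end{align*}
the $j$-th summand being the $\eta^j$-eigenbundle for the $\mathbb{Z}_k$-action, exactly parallel to the appearance of $p_*(ev^*(T_X\otimes\mathbb{C}_{\eta^i}))$ in \eqref{tloop}. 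The remaining contributions to $E_{n,d}$, coming from the legs and from the evaluations at the smoothable nodes, are absorbed separately --- into the $\widetilde{\mathbf{T}}(q)=\mathcal{J}_1^{tw}(0)$ insertions and into the change of pairing --- precisely as in the proofs of Propositions \ref{fakest} and \ref{stemtw}, using the factorization of the index bundle over nodal strata recalled there. Since $\operatorname{ch}$ and $\operatorname{Tr}$ are ring homomorphisms and $\psi^l$ sends the $\eta^j$-eigenbundle into the $\eta^{lj}$-eigenbundle, this rewrites \eqref{arrtwist} as a product over $j=0,\dots,k-1$ of ordinary cohomological index-bundle twistings of the $X\times B\mathbb{Z}_k$ theory: the $j$-th factor is the multiplicative class $\exp(\sum_l s_l\,\eta^{lj}\,\psi^l\operatorname{ch})$ applied to $p_*(ev^*(E\otimes\mathbb{C}_{\eta^j}))$, i.e. the index twisting rotated by the character $\mathbb{C}_{\eta^j}$, on a par with the replacement of $\operatorname{Td}$ by $\operatorname{Td}_{\eta^j}$ in \eqref{tloop}.

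By \cite{coatesgiv}, \cite{tseng} each such twisting rotates the cone of $X\times B\mathbb{Z}_k$ by a loop-group operator computed sector by sector, the computation being the orbifold analogue of the Bernoulli-number manipulation in the proof of part $(2)$: the age of $E\otimes\mathbb{C}_{\eta^j}$ on a sector enters through the replacement of $\tfrac{t}{1-e^{-t}}$ by $\tfrac{t}{1-\lambda\,e^{-t}}$, where $\lambda$ is the eigenvalue with which the group element labeling that sector acts on $E\otimes\mathbb{C}_{\eta^j}$, and the sectors reached through the degree $k$ cover carry the rescaling $z\mapsto z/k$ visible in $q\mapsto q^{1/k}$ in \eqref{st1} and in $\Box_k,\Box_\eta$; the non-polar terms are absorbed by the change of pairing as in Remark \ref{rescaling}. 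Summing these operators over the eigenbundle index $j$ --- the sum over $j$ combining with the Euler-Maclaurin product exactly as in the passage from \eqref{tloop} to $\Box_k,\Box_\eta$ and in the Identification remark above --- one obtains, by orthogonality of characters, a collapse to a single geometric series in each of the two relevant sectors. On the sector labeled by the identity only the modes $s_l$ with $k\mid l$ survive, each acquiring a factor $k$, and after the $\psi^k$-rescaling relating that sector to the cone of $X$ (the one producing $\mathbf{T}=\psi^k\widetilde{\mathbf{T}}$ and $Q^d\mapsto Q^{kd}$ in Proposition \ref{stemtw}) this becomes $R_k$; on the sector labeled by $g^{-1}$ the character twists together with the $\operatorname{Tr}$-scalars $\eta^{lj}$ and the $z/k$ rescaling combine into the single denominator $1-q^{l/k}\eta^{-l}$, giving $R_\eta$.

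The main obstacle is the first step: one must pin down the $\mathbb{Z}_k$-equivariant structure of $\pi_*(ev_{n+1}^*E)$ over a stem space precisely enough to see that, after passing to the orbifold quotient $\mathcal{C}_+$, it is exactly $\bigoplus_j p_*(ev^*(E\otimes\mathbb{C}_{\eta^j}))$, with no leftover line bundle on the curve and with the node/leg corrections being exactly those already absorbed in Propositions \ref{fakest} and \ref{stemtw}, so that nothing extra leaks into the loop-group operator. Granting this, the Bernoulli manipulation, the $z/k$ bookkeeping and the character resummation are routine, entirely parallel to the untwisted stem computation in \cite{gito} and to the proof of part $(2)$.
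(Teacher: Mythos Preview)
Your proposal follows essentially the same route as the paper: decompose $\pi_*(ev^*E)$ into $\mathbb{Z}_k$-eigenbundles $p_*(ev^*(E\otimes\mathbb{C}_{\eta^j}))$, feed the resulting product of index twistings into Tseng's orbifold quantum Riemann--Roch, and sum over $j$. The paper carries this out explicitly: it writes the twisting as $\prod_i\exp\bigl(\sum_l s_l\eta^{-il}\sum_j l^j\operatorname{ch}_j p_*(ev^*(E\otimes\mathbb{C}_{\eta^i}))\bigr)$, applies Tseng's formula with the Bernoulli \emph{polynomials} $B_m(i/k)$ on the $g^{-1}$-sector, and then uses the generating identity $\sum_m B_m(x)t^m/m!=te^{tx}/(e^t-1)$ together with the finite geometric sum $\sum_{i=0}^{k-1}(\eta^{-l}e^{lz/k})^i=(e^{lz}-1)/(e^{lz/k}\eta^{-l}-1)$ to collapse everything to $\psi^l E^\vee/(1-q^{l/k}\eta^{-l})$; on the identity sector $B_m(i/k)$ is replaced by $B_m(0)$ and the sum $\sum_i\eta^{-il}$ is literally character orthogonality, killing all $l$ not divisible by $k$.

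Two small points where your outline is looser than the paper's argument. First, the $1/k$ in $q^{l/k}$ does not come from a rescaling $z\mapsto z/k$ of the loop variable but from the argument $i/k$ of the Bernoulli polynomial in Tseng's operator --- it is the age of $E\otimes\mathbb{C}_{\eta^i}$ on the $g^{-1}$-sector, entering through $te^{t\cdot i/k}/(e^t-1)$ rather than through a $\operatorname{Td}_\lambda$-type replacement $t/(1-\lambda e^{-t})$. Second, on the $g^{-1}$-sector the sum over $j$ is a geometric series, not a character-orthogonality collapse; orthogonality is what happens on the identity sector. Neither point changes the strategy, but if you were to write the computation out you would need the Bernoulli-polynomial form of Tseng's theorem rather than the heuristic you sketched.
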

                  \begin{proof}
                   This is a computation based on Tseng's theorem \cite{tseng}.  
                    First we express the class (\ref{arrtwist}) in terms of the universal family $p$ over the moduli spaces $(X\times B\mathbb{Z}_k)_{0,n,d}$.  Recall that 
                   \begin{align}
                   \operatorname{Tr}_g(\pi_*(ev^* E)) = \sum_{i=0}^{k-1} \eta^{-i} p_*(ev^* E \otimes \mathbb{C}_{\eta^{i}}).\label{cohtw1}
                   \end{align}
                   Hence
                   \begin{align}
                    \operatorname{Tr}_g(\psi^l \pi_*(ev^* E)) = \sum_{i=0}^{k-1} \eta^{-il}\psi^l p_*(ev^* E \otimes \mathbb{C}_{\eta^{i}}).
                    \end{align}
                   Using the fact that $\psi^l \operatorname{ch}_j E = l^j \operatorname{ch}_j (E)$ we get 
                   \begin{align}
                   \operatorname{ch}\circ \operatorname{Tr}\left(\psi^l \pi(ev^*E)\right)& = \sum_{i=0}^{k-1} \eta^{-il} (\psi^l \operatorname{ch}p_*(ev^* E \otimes \mathbb{C}_{\eta^{i}}))=\nonumber\\
                   & =\sum_{i=0}^{k-1}\eta^{-il} \left(\sum_{j\geq 0} l^j \operatorname{ch}_j p_*(ev^* E \otimes \mathbb{C}_{\eta^{i}}))\right).
                   \end{align}
                   Therefore the contribution from \eqref{cohtw1} in the integrals equals:
                   \begin{align}
                   \prod_{i=0}^{k-1} \exp\left(\sum_{l\leq -1} s_l \eta^{-il}(\sum_{j\geq 0} l^j \operatorname{ch}_j p_*(ev^* E \otimes \mathbb{C}_{\eta^{i}}))\right).\label{cohtw2}
                   \end{align}
                   
                  This gives us a theory twisted by multiplicative characteristic classes of $k$ index bundles. 
                   
                   Let us recall Tseng's result on such twisted theories for the case of the target orbifold $X\times B\mathbb{Z}_k$. Its inertia orbifold consists of $k$ disjoint copies  $(X,g^i)$ for $i=0,1,..k-1$.  Consider a theory twisted by a characteristic class of the form
                   \begin{align*}
                   \exp\left(\sum_{j\geq 0} w_j \operatorname{ch}_j p_* (ev^* E) \right).
                   \end{align*}
                   
                   The Lagrangian cone defined by this theory is  obtained from the Lagrangian cone of the untwisted theory after multiplication by
                   \begin{align}
                   \exp\left(\sum_{j\geq 0}w_j \left(\sum_{m\geq 0}\frac{(A_m)_{j+1-m}z^{m-1}}{m!}+\frac{\operatorname{ch}_j E^{(0)}}{2}\right) \right).
                   \end{align}
                  
                  The operator  $A_m$ is defined by
                  \begin{align}
                  (A_m)_{\vert (X,g^i)}=\sum_{r=0}^{k-1}B_m(\frac{r}{k})\operatorname{ch}(E^{(r)}_{i}),
                  \end{align} 
                  where $E^{(r)}_{i}$ (respectively $E^{(0)})$ is the vector bundle over $(X, g^i)$ on which $g^i$ acts with eigenvalue $e^{2\pi i r/k}$(respectively $1$). $(A_m)_j$ is the degree $j$ piece of the operator $A_m$. The Bernoulli polynomials are defined by
                   \begin{align}
                   \sum_{m\geq 0}B_m(x)\frac{t^m}{m!}= \frac{te^{tx}}{e^t-1}.
                   \end{align}
                    Let us compute the symplectic transformation corresponding to the twisting \eqref{cohtw2} restricted to  $(X,g^{-1})$. We denote $\triangle_i$ the symplectic transformation corresponding to the contribution of $E\otimes \mathbb{C}_{\eta^{i}}$ in  the product in \eqref{cohtw2}. Then:
                   \begin{align}
                   \triangle_i = \exp \left(\sum_{j\geq 0}\left(\sum_{l\leq -1}s_l \eta^{-il}l^j\right)\left(\sum_{m\geq 0}\frac{B_m(i/k)\operatorname{ch}_{j+1-m}E}{m!}z^{m-1} \right) \right).
                   \end{align}
                  Let us extract the coefficient of $s_l$ in $R_\eta = \prod_i \triangle_i$. It equals:
                   \begin{align}
                   &\sum_{i=0}^{k-1}\sum_{j\geq 0}\eta^{-il}l^j\left(\sum_{m\geq 0}\frac{B_m(i/k)\operatorname{ch}_{j+1-m}E}{m!}z^{m-1} \right) =\nonumber\\
                   & =\sum_{i=0}^{k-1}\eta^{-il}\sum_{j\geq 0}\left(\sum_{m\geq 0}\frac{B_m(i/k)(l^m z^m) ( l^{j+1-m}\operatorname{ch}_{j+1-m}E)}{lz\cdot m!} \right) =\nonumber\\
                   & =\sum_{i=0}^{k-1}\eta^{-il}\sum_{s\geq 0}(l^s \operatorname{ch}_s E)\left(\sum_{m\geq 0}\frac{B_m(i/k)(l^m z^m)}{lz\cdot m!} \right)=\nonumber\\
                   &=\sum_{i=0}^{k-1}\eta^{-il}\sum_{s\geq 0}\psi^l \operatorname{ch}_{s}E \frac{zl e^{lzi/k}}{lz(e^{lz}-1)}=\nonumber\\
                   & =\psi^l \operatorname{ch}(E)\sum_{i=0}^{k-1}\eta^{-il} \frac{ e^{lzi/k}}{e^{lz}-1} =\nonumber\\
                   &= \psi^l \operatorname{ch}(E)  \frac{e^{lz}-1}{(e^{lz/k}\eta^{-l} -1)(e^{lz}-1)} = \frac{\psi^l \operatorname{ch}(E)}{e^{lz/k}\eta^{-l} -1}.
                   \end{align}
                   
                   Keeping in mind that $l<0$ we rewrite the result as
                   \begin{align*}
                   \frac{\psi^l \operatorname{ch}(E)}{e^{lz/k}\eta^{-l} -1} =  \frac{\psi^{-l} E^\vee}{1- q^{-l/k}\eta^{l}}-\psi^{l} E .
                   \end{align*}
                  The first term is the coefficient of $s_l$ in the answer stated in the theorem. The second terms give the correction
                    \begin{align*}
                    \exp\left( \sum s_l \psi^{l}E \right),
                    \end{align*} 
                    which is absorbed by the change of pairing on $\mathcal{K}$ as explained in Remark \ref{rescaling}.

                  For the sector corresponding to the identity the analogous computation with $B_m(i/k)$ replaced by $B_m(0)$ reveals the coefficient of $s_l$ to be $0$ if $k$ does not divide $l$ and 
                  \begin{align*}
                  \frac{ k\psi^{-l} E^\vee}{1-q^{-l}} 
                  \end{align*}  
                  if $k$ divides $l$.
                  \end{proof}
                  
                   Recall that $\mathcal{L}_H\subset \mathcal{H}$ is the cone of the (untwisted) cohomological GW theory of $X$. 
                   \begin{stems}\label{tansp}
                   $\operatorname{qch}\delta \mathcal{J}^{st,tw}(\delta \mathbf{t}, \mathbf{T})$ lies in the tangent space $ \Box_\eta R_\eta \Box_k^{-1}R_k^{-1} (\mathcal{T}_{\mathcal{I}^{tw}}\Box_k R_k \mathcal{L}_H )$ and the application point $\mathbf{T}$ is expressed in terms of $\mathcal{I}^{tw}$ by
                   \begin{align}
                   \operatorname{qch}
                   (1-q^k + \mathbf{T}(q)) =[\mathcal{I}^{tw}]_+. \label{polar1}
                   \end{align} 
                   \end{stems}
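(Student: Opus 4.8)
The plan is to transcribe the argument of \cite{gito}, Section 8, into the present twisted setting, the only genuinely new ingredient being the index twisting class \eqref{arrtwist}, whose effect on the cone has been computed in Proposition \ref{rcomp}. The starting observation is that \eqref{st1} already displays $\delta\mathcal{J}^{st,tw}(\delta\mathbf{t},\mathbf{T})$ in the canonical shape of a generic element of a tangent space to a cohomological Lagrangian cone: a correlator series with one descendant output seat, one free input seat $\delta\mathbf{t}$, and a fixed background supplied by the legs $\mathbf{T}$. So the first step is simply to record that $\operatorname{qch}\delta\mathcal{J}^{st,tw}(\delta\mathbf{t},\mathbf{T})$ is a tangent vector to the Lagrangian cone $\mathcal{L}^{st,tw}$ of the twisted stem theory of $X\times B\mathbb{Z}_k$: it is tangent at the point whose application point is the leg input $\mathbf{T}$ sitting in the sector labeled $1$, and it points in the sector labeled $g^{-1}$.

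Second, I would push the four twisting classes through the formalism of twisted cohomological GW theory (\cite{coatesgiv}, \cite{tseng}, \cite{to1}) for the orbifold target $X\times B\mathbb{Z}_k$, whose untwisted cone is a copy of $\mathcal{L}_H$ in each sector. The effects are those tabulated above: \eqref{tloop} together with \eqref{arrtwist} (via Proposition \ref{rcomp}) rotate the sector $1$ by $\Box_k R_k$ and the sector $g^{-1}$ by $\Box_\eta R_\eta$; the kappa-type class \eqref{tdilaton} changes the dilaton shift in the sector $1$ to $1-q^k$; and the nodal class \eqref{tnodes} replaces the negative parts of the polarizations in the sectors $1$ and $g^{-1}$ by the spans of the expansions of $\frac{1}{1-q^kL^k}$ and $\frac{1}{1-q^{1/k}L^{1/k}}$ respectively. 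Hence $\mathcal{L}^{st,tw}$ is, sector by sector, the image of $\mathcal{L}_H$ under the corresponding loop group transformation, with the modified dilaton shift and polarization in place. Relation \eqref{polar1} is then the standard reconstruction of a point of a Lagrangian cone from its projection to the positive part of the polarization, the projection being the dilaton shift plus the input, $1-q^k+\mathbf{T}(q)$, in the sector $1$, read with respect to the polarization produced by \eqref{tnodes}; this fixes the meaning of $\mathcal{I}^{tw}$.

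The geometric heart of the matter — and the step I expect to need the most care — is the claim that the sector-$1$ and sector-$g^{-1}$ components of the point of $\mathcal{L}^{st,tw}$ at which $\delta\mathcal{J}^{st,tw}$ is tangent descend from one and the same point $\mathbf{p}\in\mathcal{L}_H$, so that $\mathcal{I}^{tw}=\Box_k R_k\mathbf{p}$ while the sector-$g^{-1}$ component is $\Box_\eta R_\eta\mathbf{p}=\Box_\eta R_\eta\Box_k^{-1}R_k^{-1}\mathcal{I}^{tw}$. This is exactly the splitting analysis of \cite{gito}: a stem map factors through a single rational covering component $C_+$ carrying $0$, $\infty$ and all the legs, and the $B\mathbb{Z}_k$ factor only records the $\mathbb{Z}_k$-grading, so the relevant cone point is diagonal across sectors up to the sector-wise transformations. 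What remains is to check that the fourth twisting does not disturb this: the index twisting factorizes over the leg nodes by the identity $i^*(\psi^l\pi_*(ev^*E))=p_1^*(\psi^l\pi_*(ev^*E))+p_2^*(\psi^l\pi_*(ev^*E))-\psi^l ev_{node}^*E$ recorded earlier in this section, with the node term absorbed by the twisted pairing, so the twisting distributes over the splitting as a multiplicative class of the same form — precisely as in the determination of the shape of $\mathcal{J}_\eta^{tw}(q)$ above. This distribution is what I would spell out in detail.

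Granting the compatibility, the conclusion is formal. Since $\Box_\eta R_\eta$ and $\Box_k R_k$ are linear symplectomorphisms taking cones to cones and tangent spaces to tangent spaces, the tangent space to $\Box_\eta R_\eta\mathcal{L}_H$ at $\Box_\eta R_\eta\mathbf{p}$ equals $\Box_\eta R_\eta T_{\mathbf{p}}\mathcal{L}_H=\Box_\eta R_\eta\Box_k^{-1}R_k^{-1}\bigl(\Box_k R_k T_{\mathbf{p}}\mathcal{L}_H\bigr)=\Box_\eta R_\eta\Box_k^{-1}R_k^{-1}\bigl(\mathcal{T}_{\mathcal{I}^{tw}}\Box_k R_k\mathcal{L}_H\bigr)$, and by the first step this contains $\operatorname{qch}\delta\mathcal{J}^{st,tw}(\delta\mathbf{t},\mathbf{T})$. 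Throughout I would keep track, exactly as in the untwisted case, of the substitution $q\mapsto q^{1/k}$ relating cotangent lines on the $k$-fold cover to those on the quotient curve (this is what produces the $L^{1/k}$ in \eqref{st1}) and of $Q^d\mapsto Q^{dk}$ on Novikov variables; neither interacts with the twistings.
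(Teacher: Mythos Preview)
Your proposal is correct and follows essentially the same approach as the paper's proof --- identify \eqref{st1} as a tangent vector to the twisted stem cone of $X\times B\mathbb{Z}_k$ with application point in the sector labeled $1$ and direction in the sector labeled $g^{-1}$, then read off the result from the sector-wise loop group rotations $\Box_k R_k$, $\Box_\eta R_\eta$ together with the modified dilaton shift $1-q^k$ and polarization supplied by \eqref{tdilaton} and \eqref{tnodes}. You supply considerably more detail than the paper (which compresses the entire argument into five sentences), in particular making explicit the common preimage $\mathbf{p}\in\mathcal{L}_H$ underlying both sectors that the paper leaves implicit; one small remark is that the nodal factorization you invoke at the end of your third step was already used upstream to establish Proposition~\ref{stemtw}, and what is actually relevant here is simply that Proposition~\ref{rcomp} exhibits the index twisting as a sector-wise loop group rotation, which is what carries over the ``same $\mathbf{p}$'' structure from \cite{gito}.
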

                   Here $[..]_+$ means projection along the negative space of the polarization of the sector labeled by $1$. 
                  \begin{proof} 
                   The series \eqref{st1} can be identified with a tangent vector to the cone of the twisted stem theory of $X/\mathbb{Z}_k$ in the sector labeled by $g^{-1}$. The application point belongs to the sector labeled by $1$, hence to the cone $\Box_k R_k \mathcal{L}_H.$ Since the $g^{-1}$-sector rotates by $\Box_\eta R_\eta$ the series belongs to the tangent space in the proposition. However the twisting by kappa classes and nodal classes in the twisted stem theory change the dilaton shift and the polarizations. The denominator $1-q^{1/k}L^{1/k}$ is equivalent to applying the polarization of the $g^{-1}$-sector to the same space. And the new dilaton shift is $1-q^k$, hence the relation between application points.
                   \end{proof}
                    
                     We are left with identifying the tangent space $\mathcal{T}_{\mathcal{I}^{tw}}\Box_k R_k \mathcal{L}_H$ with the $\mathcal{T}_k$ in the Theorem. We first show that  
                  \begin{psic} \label{psik}
                  {\em  Under the Assumption \ref{eu} the cone $\operatorname{qch}^{-1}(\Box_k\cdot R_k\mathcal{L}_H) = \psi^k \mathcal{L}^{tw}_{fake}.$}
                  \end{psic}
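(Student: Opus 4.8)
The plan is to work on the $\operatorname{qch}$ side and to reduce the statement to the untwisted case $s=0$, which is already contained in \cite{gito}; the only genuinely new point will be how the index--twisting operator behaves under the Adams operation. First I would identify the two cones being compared. By \eqref{tw1234} and part $(2)$ of Theorem \ref{thm12} (proved above), $\operatorname{qch}(\mathcal{L}^{tw}_{fake}) = \triangle^{tw}\triangle\,\mathcal{L}_H$, where $\triangle$ is the Euler--Maclaurin operator attached to Example \ref{examplefake} and $\triangle^{tw}:=\exp\bigl(\sum_{l\ge 1}s_l\,\tfrac{\psi^l E^\vee}{1-q^l}\bigr)$ is the loop group transformation extracted in part $(2)$, the $-\tfrac{1}{2}\psi^l E$ and similar constant corrections being absorbed into the identification of loop spaces as in Remark \ref{rescaling}. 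On the other side, $\operatorname{qch}$ intertwines $\psi^k$ with the rescaling operator $\Psi^k$ that sends $z\mapsto kz$, multiplies the Chern roots of $T_X$ and of $E$ by $k$, and sends $Q^d\mapsto Q^{kd}$; since $\triangle$ and $\triangle^{tw}$ are multiplication operators, conjugation by $\Psi^k$ only rescales their symbols, so
\[
\operatorname{qch}\bigl(\psi^k\mathcal{L}^{tw}_{fake}\bigr)=\bigl(\Psi^k\triangle^{tw}\Psi^{-k}\bigr)\bigl(\Psi^k\triangle\Psi^{-k}\bigr)\,\Psi^k\mathcal{L}_H .
\]

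Next I would invoke the untwisted statement. At $s=0$ one has $\triangle^{tw}=R_k=R_\eta=\mathrm{id}$ and the claim reduces to $(\Psi^k\triangle\Psi^{-k})\,\Psi^k\mathcal{L}_H=\Box_k\mathcal{L}_H$, i.e.\ $\operatorname{qch}^{-1}(\Box_k\mathcal{L}_H)=\psi^k\mathcal{L}_{fake}$. This is precisely the identity underlying condition $(3)$ of Theorem \ref{mainadelic} in \cite{gito}: it is the Euler--Maclaurin comparison between $\prod_i\prod_{r\ge1}\tfrac{kx_i}{1-e^{-kx_i+rkz}}$ acting on $\Psi^k\mathcal{L}_H$ and $\prod_i\prod_{r\ge1}\tfrac{x_i-rz}{1-e^{-kx_i+rkz}}$ acting on $\mathcal{L}_H$, the discrepancy $\prod_{r\ge1}\tfrac{x_i-rz}{kx_i}$ being compensated by the rescaling of the dilaton shift from $-z$ to $-kz$ in passing from $\mathcal{L}_H$ to $\Psi^k\mathcal{L}_H$. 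I would cite this rather than reprove it.

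It then remains to show $\Psi^k\triangle^{tw}\Psi^{-k}=R_k$. Conjugating $\triangle^{tw}$ by $\Psi^k$ replaces $q^l$ by $q^{lk}$ and $\psi^l E^\vee$ by $\psi^{lk}E^\vee$, giving $\Psi^k\triangle^{tw}\Psi^{-k}=\exp\bigl(\sum_{l\ge1}s_l\,\tfrac{\psi^{lk}E^\vee}{1-q^{lk}}\bigr)$. Under Assumption \ref{eu} together with the convention $s_l=s_{-l}$ one checks $s_l=s_{-1}/|l|$ for every $l\ne0$, hence $k\,s_{lk}=s_l$, and the right--hand side is exactly $\exp\bigl(\sum_{l\ge1}k\,s_{lk}\tfrac{\psi^{lk}E^\vee}{1-q^{lk}}\bigr)=R_k$. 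Since $R_k$ and $\Box_k$ are multiplication operators built from commuting cohomology classes of $X$ they commute, so combining the three steps yields $\operatorname{qch}(\psi^k\mathcal{L}^{tw}_{fake})=R_k\Box_k\mathcal{L}_H=\Box_k R_k\mathcal{L}_H$, as asserted; the leftover constant factors (the $\tfrac{1-T_X^\vee}{1-\psi^k T_X^\vee}$ that appeared above and the $\tfrac{1}{2}\psi^l E$ terms) are disposed of by the change of pairing exactly as in Remark \ref{rescaling}. I expect the main obstacle to be organizational rather than computational: the genuinely hard Euler--Maclaurin identity behind $(\Psi^k\triangle\Psi^{-k})\Psi^k\mathcal{L}_H=\Box_k\mathcal{L}_H$ --- in particular the replacement of the naive numerator $kx_i$ by $x_i-rz$ --- is already established in \cite{gito}, so the work here is to keep the twisting bookkeeping straight and to verify that the collapse $k\,s_{lk}=s_l$, which is the precise place where Assumption \ref{eu} enters and what makes the answer depend only on $s_{-1}$, survives the conjugation by $\Psi^k$ unchanged.
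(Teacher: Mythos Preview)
Your proof is correct and follows essentially the same approach as the paper: cite \cite{gito} for the untwisted identity $\operatorname{qch}^{-1}(\Box_k\mathcal{L}_H)=\psi^k\mathcal{L}_{fake}$, and then check that the twisting operator $R_1=\exp\bigl(\sum_{l\ge1}s_l\tfrac{\psi^l E^\vee}{1-q^l}\bigr)$ satisfies $\psi^k R_1=R_k$ under Assumption \ref{eu} via the relation $k\,s_{lk}=s_l$. The paper compresses this into two sentences on the $\mathcal{K}$-side (writing simply $\mathcal{L}^{tw}_{fake}=R_1\mathcal{L}_{fake}$ and $R_k=\psi^k R_1$), while you unfold the same computation on the $\mathcal{H}$-side through the conjugation $\Psi^k\triangle^{tw}\Psi^{-k}$; the content is the same.
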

                  \begin{proof}
                   It is shown in \cite{gito}(Section $8$, Proposition $9$) that $\operatorname{qch}^{-1}(\Box_k \mathcal{L}_H) = \psi^k (\mathcal{L}_{fake})$. Since $\mathcal{L}^{tw}_{fake}= R_1 \cdot \mathcal{L}_{fake}$ and $R_k =\psi^k R_1$ under the Assumption \ref{eu}, the proposition follows.  
                  \end{proof}
                  \begin{11}
                  {\em Let $\mathcal{I}_{fake}$ be the point on $\mathcal{L}^{tw}_{fake}$ such that $\psi^k (\mathcal{I}_{fake}(\widetilde{\mathbf{T}})) = \mathcal{I}^{tw}(\mathbf{T})$. Then $\psi^k \widetilde{\mathbf{T}} =\mathbf{T}$.}
                  \end{11} 
                \begin{proof}
                  
                  Recall that $\mathcal{I}^{tw}$ is a point on the identity sector of the twisted stem theory: it lies on the cone  $\Box_k R_k \mathcal{L}_H$ , with the  
                  corresponding dilaton shift $1-q^k$ and polarization whose negative space is spanned by \{$\frac{q^{ki}}{(1-q^k)^{i+1}}\}_{i\geq 0} =\psi^k (\mathcal{K}_-^{1})$. Then
                   \begin{align*}
                    \mathcal{I}_{fake}(\widetilde{\mathbf{T}}) = (1-q) + \widetilde{\mathbf{T}} +\sum \frac{Q^d}{n!}\Phi_a\la  \frac{\Phi^a}{1-qL},\widetilde{\mathbf{T}}(L),\ldots ,\widetilde{\mathbf{T}}(L)\ra_{0,n+1,d}^{fake, tw} , \\
                   \mathcal{I}^{tw}(\mathbf{T}) = (1-q^k) + \mathbf{T} +\sum \frac{Q^d}{n!}\Phi_a\la\frac{\Phi^a}{1-q^kL^k},\mathbf{T}(L),\ldots ,\mathbf{T}(L)\ra^{st,tw}_{0,n+1,d} .
                     \end{align*}
                  
                   Now using $\psi^k (\mathcal{I}_{fake})=\mathcal{I}^{tw}$ it follows that  
                  $\mathbf{T}=\psi^k(\widetilde{\mathbf{T}})$. The constraints of the leg contributions in KRR impose that $\widetilde{\mathbf{T}}$ is  $\mathcal{J}_1^{tw}(0)$.
                  \end{proof} 
                   Moreover if we differentiate the relation $\psi^k (\mathcal{I}_{fake})=\mathcal{I}^{tw}$ it follows that
                   \begin{align*}
                   & \psi^k \left(\mathbf{f}(q) +\sum \frac{Q^d}{n!}\Phi_a\la \frac{\Phi^a}{1-qL},\widetilde{\mathbf{T}}(L),\ldots ,\widetilde{\mathbf{T}}(L),  \mathbf{f}(L)\ra_{0,n+2,d}^{fake, tw}\right) = \\
                   & \psi^k \mathbf{f}(q) + \sum \frac{Q^d}{n!}\Phi_a\la\frac{\Phi^a}{1-q^mL^m},\mathbf{T}(L),\ldots ,\mathbf{T}(L), \psi^k \mathbf{f}(L)\ra^{st,tw}_{0,n+2,d}.
                   \end{align*}
                   
                   On the RHS we have a point in the tangent space $\mathcal{T}_{\mathcal{I}^{tw}}\Box_k R_k \mathcal{L}_H$ (in the direction of $\psi^k \mathbf{f}(q))$. But on the LHS we have $\psi^k [S(q, Q) \mathbf{f}(q)] $ which almost belongs to $\mathcal{T}_k$ defined in Definition \ref{tang1}: we also need to 
                   change $Q^d\mapsto Q^{dk}$ in $S$ because the degrees in $\mathcal{J}_\eta^{tw}$ are multiplied by $k$.  
                   This concludes the proof of Theorem \ref{thm12}.
  
   \section{The  permutation-equivariant theory}\label{sec06}
    There is a natural $S_n$ action on the moduli spaces $X_{0,n,d}$ given by renumbering the marked points. Givental has recently generalized  the definition of K-theoretic GW invariants in this setting. He considers the $S_n$ modules
       \begin{align*}
       \left[\mathbf{t}(L),\ldots ,\mathbf{t}(L)\right]_{0,n,d}:= \sum (-1)^m H^m\left(X_{0,n,d}; \mathcal{O}_{n,d}^{vir}\otimes_{i=1}^n \mathbf{t}(L_i)\right) 
       \end{align*}
     where the input $\mathbf{t}(q)$ is a Laurent polynomial in $q$ with coefficients in $K(X)\otimes \Lambda$. Here $\Lambda$ is an algebra which carries 
     $\psi^k$ operations. Moreover for convergence purposes we assume $\Lambda$ has a maximal ideal $\Lambda_+$ and we endow it with the corresponding $\Lambda_+$-adic topology. The natural choices for $\Lambda$ satisfy these conditions - in general we want it to include the Novikov variables, the algebra of symmetric polynomials in a given number of variables and/or the torus equivariant $K$-ring of the point.
     For suitable choices of $\Lambda$ the permutation-quivariant invariants encode all the information about the $S_n$ modules above. We refer to \cite{giv} for details.
      
       The invariants :
      \begin{align*}
         \la \mathbf{t}(L),\ldots ,\mathbf{t}(L)) \ra^{S_n}_{0,n,d}
         \end{align*}
         are defined as K-theoretic push forwards of the classes  $\mathcal{O}_{n,d}^{vir}\otimes_{i=1}^n \mathbf{t}(L_i)$ along the map  $X_{0,n,d}/S_n \to [pt.]$. 
         
         One can define the $J$-function in the permutation-equivariant setting
               \begin{align*}
               \mathcal{J}_{S_\infty}(\mathbf{t}(q)):=1-q+\mathbf{t}(q) +\sum_{d,a} Q^d \Phi^a\la \frac{\Phi_a}{1-qL},\mathbf{t}(L),\ldots ,\mathbf{t}(L) \ra^{S_n}_{0,n+1,d}
               \end{align*}
            Givental noticed that  the combinatorics of the Kawasaki strata works the same as in the non permutation-equivariant theory. He used this to describe the Laurent expansion of $\mathcal{J}_{S_\infty}$  near each value of $q$. 
               \begin{snj} \label{adelthm}
               \em{ (\cite{giv}, Part III)} The values of $\mathcal{J}_{S_\infty}$ are characterized by:
         \begin{enumerate}
         \item $\mathcal{J}_{S_\infty}$ has poles only at roots of unity.\\
                       \item The expansion at $q=1$ $(\mathcal{J}_{S_\infty})_{(1)}$ lies on the cone $\mathcal{L}_{fake}$. \\
                 \item  $(\mathcal{J}_{S_\infty})_\eta (q^{1/k}\eta^{-1}) \in \Box_\eta \Box_k^{-1} \mathcal{T}_k(\mathcal{J}_{S_\infty}(\mathbf{t})_{(1)})$ , where $\mathcal{T}_k(\mathbf{f})$ is the space of  Definition \ref{tang1}. 
         \end{enumerate}
              
               \end{snj}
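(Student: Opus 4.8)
The plan is to follow the strategy of \cite{gito} that establishes Theorem \ref{mainadelic}, which is exactly the $s_l=0$ specialization of the analysis carried out in Section \ref{sec05}, and to combine it with Givental's observation that the combinatorics of the Kawasaki strata is insensitive to the $S_n$-action once one keeps track of how the symmetric group redistributes the marked points among the irreducible components of a nodal curve. Accordingly I would treat the three expansions of $\mathcal{J}_{S_\infty}(\mathbf{t})$ one at a time, with all holomorphic Euler characteristics of the quotients $X_{0,n,d}/S_n$ computed by Kawasaki--Riemann--Roch applied to the virtual $S_n$-representations. Condition $(1)$ is then immediate: the only source of $q$-denominators in the $J$-function is the geometric expansion of $\frac{1}{1-qL_1}$ produced by KRR on a stratum on which a symmetry $g$ acts nontrivially on the cotangent line $L_1$ at the distinguished marked point $x_1$; since $g$ has finite order the resulting pole lies at $q=\zeta^{-1}$ for $\zeta$ a root of unity, and for $\eta$ not a root of unity $(\mathcal{J}_{S_\infty})_\eta$ is a power series in $(1-q\eta)$.

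For condition $(2)$ I would argue as in the $q=1$ analysis of Section \ref{sec05} with $s_l=0$: the strata contributing at $q=1$ are those on which the symmetry acts trivially on the component $C_+$ carrying $x_1$, and summing over the degrees, numbers of marked points and combinatorial types of the components $C_-$ meeting $C_+$ at nodes reorganizes the generating series. Because one pushes forward along $X_{0,n,d}/S_n$, the marked points are permuted freely among all special points, so the binomial reorganization folds the inputs $\mathbf{t}(L)$ at the marked points and the shift $\tilde{\mathbf{t}}(L)$ at the nodes into a single argument, giving $\mathcal{J}_{S_\infty}(\mathbf{t})_{(1)} = J_{fake}(\mathbf{t}(q)+\tilde{\mathbf{t}}(q))$ for an appropriate shift $\tilde{\mathbf{t}}$; since $J_{fake}$ with an arbitrary argument parametrizes $\mathcal{L}_{fake}$ this yields $\mathcal{J}_{S_\infty}(\mathbf{t})_{(1)}\in\mathcal{L}_{fake}$. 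The point requiring verification here is that the virtual $S_n$-representation factorizes multiplicatively over the normalization of the nodal locus, with the residual symmetric groups acting on the factors.

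For condition $(3)$, let $\eta$ be a primitive $k$-th root of unity. The polar part at $q=\eta^{-1}$ is supported on the stem strata, on which $C_+$ factors through the cover $z\mapsto z^k$ and which are identified with moduli of stable maps to $X\times B\mathbb{Z}_k$; this is exactly the situation of Section \ref{sec05} with no index twisting, so the classes $(\ref{tloop})$, $(\ref{tdilaton})$, $(\ref{tnodes})$ coming from the tangent and normal directions to the stem strata are present while $(\ref{arrtwist})$ is absent. Processing these through the formalism of twisted cohomological GW theory of $X\times B\mathbb{Z}_k$: the loop-type class $(\ref{tloop})$ rotates the sectors labeled $1$ and $g^{-1}$ by $\Box_k$ and $\Box_\eta$, with $\operatorname{qch}^{-1}(\Box_k\mathcal{L}_H)=\psi^k\mathcal{L}_{fake}$ by Proposition \ref{psik} at $s_l=0$ (equivalently \cite{gito}, Section $8$); the kappa class $(\ref{tdilaton})$ changes the dilaton shift to $1-q^k$ and the nodal class $(\ref{tnodes})$ changes the polarization. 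The leg contributions, namely the $k$-tuples of nodes glued to isomorphic curves $(C_1,\dots,C_k)$ which in the permutation-equivariant theory may now carry marked points redistributed by $S_n$, would assemble after summing over degrees and using $\operatorname{Tr}(g\mid V^{\otimes k})=\psi^k V$ (\cite{gito}, Section $7$) into a leg input $\mathbf{T}=\psi^k\widetilde{\mathbf{T}}$ with $\widetilde{\mathbf{T}}=\mathcal{J}_{S_\infty}(\mathbf{t})_{(1)}$; this is precisely why the application point in the statement is $\mathcal{J}_{S_\infty}(\mathbf{t})_{(1)}$ rather than $\mathcal{J}_1(0)$. Combining, and differentiating the relation between the two application points exactly as at the end of Section \ref{sec05} (with the rescaling $Q^d\mapsto Q^{dk}$), one finds that $(\mathcal{J}_{S_\infty})_\eta(q^{1/k}\eta^{-1})$ lands in $\Box_\eta\Box_k^{-1}$ applied to the tangent space of $\psi^k\mathcal{L}_{fake}$ at $\psi^k\widetilde{\mathbf{T}}$, which by Definition \ref{tang1} is $\Box_\eta\Box_k^{-1}\mathcal{T}_k(\mathcal{J}_{S_\infty}(\mathbf{t})_{(1)})$, the constant factor $\frac{1-T_X^\vee}{1-\psi^k T_X^\vee}$ being absorbed by rescaling loop spaces as in Remark \ref{rescaling}.

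The hard part is entirely the $S_n$-equivariant bookkeeping of the marked points: one must show that the Kawasaki--Riemann--Roch contributions of $X_{0,n,d}/S_n$ distribute over the strata so that the $C_-$ and leg components absorb marked-point inputs on the same footing as smoothing parameters, producing the single generic argument $\mathbf{t}+\tilde{\mathbf{t}}$, and that the trace of $g$ on the tensor product of cotangent lines at a permuted $k$-tuple of nodes is the Adams operation $\psi^k$, compatibly with the rescaling $Q^d\mapsto Q^{dk}$. Once these factorization statements are in place, every analytic computation, namely the loop-group asymptotics producing $\Box_k$ and $\Box_\eta$, the identification $\operatorname{qch}^{-1}(\Box_k\mathcal{L}_H)=\psi^k\mathcal{L}_{fake}$, and the dilaton and polarization changes, is identical to the $s_l=0$ case treated in Section \ref{sec05}, so I do not expect further obstacles.
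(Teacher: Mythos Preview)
Your proposal is correct and matches the paper's approach: the paper does not give a detailed proof of this theorem (it is attributed to \cite{giv}, Part III) but only remarks that one applies KRR and identifies the Laurent expansions with twisted cohomological theories exactly as in \cite{gito}, the single difference being that in the permutation-equivariant setting the legs are allowed to carry marked points, so the application point in condition $(3)$ becomes $\mathcal{J}_{S_\infty}(\mathbf{t})_{(1)}$ rather than $\mathcal{J}_1(0)$. Your detailed sketch is a faithful unpacking of precisely this outline, and you have correctly isolated the one substantive modification.
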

               Basically, one applies KRR and identifies the Laurent expansions of $\mathcal{J}_{S_\infty}$ with generating series of certain twisted theories as before. The only difference is that the legs are allowed to carry marked points, and condition $(3)$ of the theorem is modified accordingly.   
                  
                  We now proceed to define twisted permutation-equivariant K-theoretic GW invariants by tensoring the $S_n$ modules with multiplicative classes of $\pi_* E$: 
                  \begin{align*}
                   \la \mathbf{t}(L),\ldots ,\mathbf{t}(L)\ra_{0,n,d}^{S_n,tw}:= \chi\left(X_{0,n,d}/S_n; \mathcal{O}_{n,d}^{vir}\otimes_{i=1}^n \mathbf{t}(L_i)\otimes \exp (\sum_l s_l \psi^l E_{n,d})\right).
                   \end{align*}     
                 The $J$-function of the twisted permutation-equivariant quantum K-theory is 
                 
                 \begin{align*}
                       \mathcal{J}_{S_\infty}^{tw}(\mathbf{t}(q)):=1-q+\mathbf{t}(q) +\sum_{d,a} Q^d \Phi^a\la \frac{\Phi_a}{1-qL},\mathbf{t}(L),\ldots ,\mathbf{t}(L) \ra^{S_n, tw}_{0,n+1,d}.
                       \end{align*}
                 The characterization of the range of $\mathcal{J}^{tw}_{S_\infty}$ extends to this setup.
                       \begin{twsnj}  \label{thm13}
                      The values of $\mathcal{J}^{tw}_{S_\infty}$ are characterized by:
                     \begin{enumerate}
                       \item $\mathcal{J}^{tw}_{S_\infty}$ has poles only at roots of unity.\\
                    \item The expansion at $q=1$ $(\mathcal{J}^{tw}_{S_\infty})_{(1)}$ lies on the cone $\mathcal{L}^{tw}_{fake}$. \\
               \item $(\mathcal{J}^{tw}_{S_\infty})_\eta (q^{1/k}\eta^{-1}) \in R_\eta R_k^{-1}\Box_\eta \Box_k^{-1} \mathcal{T}_k(\mathcal{J}^{tw}_{S_\infty}(\mathbf{t})_{(1)})$ , where $\mathcal{T}^{tw}_k(\mathbf{f}^{tw})$ is given by the procedure described in Definition \ref{tang1}, but starting with the point $\mathbf{f}^{tw}\in \mathcal{L}_{fake}^{tw}$.
                     \end{enumerate}  
                     
                       \end{twsnj}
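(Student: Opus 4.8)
The plan is to run the proof of Theorem \ref{thm12} from Section \ref{sec05} essentially line by line, inserting at exactly one point the permutation-equivariant bookkeeping that Givental uses to pass from Theorem \ref{mainadelic} to Theorem \ref{adelthm}. Part $(1)$ needs no new work: the symmetries of stable maps that act nontrivially on $L_1$ are of finite order, so by Kawasaki--Riemann--Roch $\mathcal{J}^{tw}_{S_\infty}$ can only acquire poles at roots of unity. For part $(2)$ I would imitate the first proposition of Section \ref{sec05}: the Kawasaki strata contributing poles at $q=1$ are precisely those on which the symmetry acts trivially on the component $C_+$ carrying the first marked point; the index twisting class $\exp(\sum_l s_l\psi^l E_{n,d})$ factorizes over nodal strata as recalled there; and summing over all curves glued to $C_+$ at nodes, together now with the permuted marked points that may sit on $C_+$ itself, produces an input $\widetilde{\mathbf{t}}^{tw}(q)$ for which $(\mathcal{J}^{tw}_{S_\infty})_{(1)}(\mathbf{t}(q))=J^{tw}_{fake}(\mathbf{t}(q)+\widetilde{\mathbf{t}}^{tw}(q))$, so that $(\mathcal{J}^{tw}_{S_\infty})_{(1)}\in\mathcal{L}^{tw}_{fake}$. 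The identity $\mathcal{L}^{tw}_{fake}=\exp\big(\sum_{l\geq1}s_l\psi^lE^\vee/(1-q^l)\big)\mathcal{L}_{fake}$ is the loop-group computation carried out around formula $(\ref{tw1234})$ in Section \ref{sec05}, and it is unchanged here.

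For part $(3)$, under Assumption \ref{eu} and with $\eta$ a primitive root of unity of order $k\neq1$, I would identify the polar part of $(\mathcal{J}^{tw}_{S_\infty})_\eta$ at $q=\eta^{-1}$ with the contributions of the stem strata, exactly as in Section \ref{sec05}: maps whose restriction to $C_+$ factors through $z\mapsto z^k$, reinterpreted as stable maps to $X\times B\mathbb{Z}_k$ of $k$ times smaller degree. The tangent and normal directions to these strata together with the index twisting are governed by precisely the twisted stem theory assembled in Section \ref{sec05} from the classes $(\ref{tloop})$, $(\ref{tdilaton})$, $(\ref{tnodes})$, $(\ref{arrtwist})$, and Proposition \ref{rcomp} applies verbatim. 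The only genuinely new feature is that the $k$-tuples of leg curves $(C_1,\dots,C_k)$, cyclically permuted by $\mathbb{Z}_k$, are now allowed to carry permuted marked points; summing over their degrees and marked points therefore assembles, for each leg node, $\psi^k$ of the full permutation-equivariant series $(\mathcal{J}^{tw}_{S_\infty})_{(1)}$ at the given input $\mathbf{t}$ --- one still uses $\operatorname{Tr}(g\,|\,V^{\otimes k})=\psi^kV$ for the cyclic action --- rather than $\mathcal{J}^{tw}_1(0)$. This yields the permutation-equivariant analogue of the stem-space proposition of Section \ref{sec05}, realizing $(\mathcal{J}^{tw}_{S_\infty})_\eta(q^{1/k}\eta^{-1})$ as a tangent vector to the cone of the twisted stem theory of $X\times B\mathbb{Z}_k$ with application point $\psi^k$ of $(\mathcal{J}^{tw}_{S_\infty})_{(1)}$.

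To finish I would process the stem-theory twistings as in Propositions \ref{tansp} and \ref{psik}. The classes $(\ref{tloop})$, $(\ref{tdilaton})$, $(\ref{tnodes})$ rotate the sectors of $X\times B\mathbb{Z}_k$ labelled $1$ and $g^{-1}$ by $\Box_k$ and $\Box_\eta$, change the dilaton shift to $1-q^k$, and impose the polarizations $1/(1-q^kL^k)$ and $1/(1-q^{1/k}L^{1/k})$ respectively, with $\Box_\eta\Box_k^{-1}$ equal, by the Euler--Maclaurin computation of Section \ref{sec05}, to the operator $\exp\sum_{i\geq1}\big(\psi^iT_X^\vee/(i(1-\eta^{-i}q^{i/k}))-\psi^{ik}T_X^\vee/(i(1-q^{ik}))\big)$ appearing in condition $(3)$ of Theorem \ref{thm12}; the index class $(\ref{arrtwist})$ rotates the same sectors by $R_k$ and $R_\eta$ via Proposition \ref{rcomp}. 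Combining these exactly as in Proposition \ref{tansp} places $(\mathcal{J}^{tw}_{S_\infty})_\eta(q^{1/k}\eta^{-1})$ in $R_\eta R_k^{-1}\Box_\eta\Box_k^{-1}$ applied to the tangent space of $\operatorname{qch}^{-1}(\Box_kR_k\mathcal{L}_H)=\psi^k\mathcal{L}^{tw}_{fake}$ (Proposition \ref{psik}, under Assumption \ref{eu}) at the stem application point, which under $\psi^k$ corresponds to the point $(\mathcal{J}^{tw}_{S_\infty})_{(1)}$ on $\mathcal{L}^{tw}_{fake}$; the $\psi^k$-equivariance argument that closes Section \ref{sec05}, together with the substitution $Q^d\mapsto Q^{dk}$, then identifies this tangent space with $\mathcal{T}_k\big((\mathcal{J}^{tw}_{S_\infty})_{(1)}\big)$ in the sense of Definition \ref{tang1}, which is exactly condition $(3)$.

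The step I expect to be the main obstacle is the one that is genuinely new: checking that permitting marked points on the legs does not break the factorization of the index twisting class over Kawasaki strata, and that the reassembled leg contribution is really $\psi^k$ of $(\mathcal{J}^{tw}_{S_\infty})_{(1)}$ at general input rather than at $\mathbf{t}=0$. This is where the interaction of the $\mathbb{Z}_k$-action cyclically permuting the $k$ branches with the $S_n$-action on the marked points carried by a single branch must be handled with care, following Givental's permutation-equivariant combinatorics as in Theorem \ref{adelthm}; everything else --- the loop-group transformations, the Euler--Maclaurin asymptotics, and Propositions \ref{rcomp}, \ref{tansp}, \ref{psik} --- carries over from Section \ref{sec05} without modification.
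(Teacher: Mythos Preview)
Your proposal is correct and follows exactly the approach of the paper, which in fact gives only a one-sentence proof: it simply remarks that the argument of Section~\ref{sec05} carries over verbatim, with the sole modification that in the permutation-equivariant setting the leg curves are allowed to carry marked points, so the leg input $\mathbf{T}$ is $\psi^k$ of $(\mathcal{J}^{tw}_{S_\infty})_{(1)}$ at the general input $\mathbf{t}$ rather than at $\mathbf{t}=0$. You have identified precisely this point as the one new ingredient and correctly attributed it to Givental's permutation-equivariant combinatorics underlying Theorem~\ref{adelthm}; everything else in your outline is the content of Section~\ref{sec05} reproduced faithfully.
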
 
                       
                       \begin{proof} 
                         Again  the main difference with the non permutation-equivariant case is that  we do not impose the condition $\mathbf{t}(q)=0$ on the definition of $\mathbf{T}(q)$ because we are allowed to permute marked points. Hence the space $\mathcal{T}^{tw}_k$ in condition $(3)$ is obtained from the tangent space to $\mathcal{L}^{tw}_{fake}$ at $\mathcal{J}^{tw}_{S_\infty}(\mathbf{t})_{(1)}$.    
                           \end{proof}
                          
                           Remarkably, from the two {\em local} characterizations above we obtain  a {\em global} relation, albeit under the restrictions of the Assumption \ref{eu} 
                          \begin{globaltwsnj} \label{snthm}
                          Assume the characteristic class in the permutation-equivariant twisted theory is the Euler class. Let $\mathcal{L}_{S_\infty}$ and $\mathcal{L}_{S_\infty}^{tw}$ denote the ranges of  the $J$-functions  $\mathcal{J}_{S_\infty}$ and $\mathcal{J}_{S_\infty}^{tw}$ respectively. Then
                          \end{globaltwsnj}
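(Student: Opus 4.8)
The plan is to deduce the global statement from the two local characterizations in Theorem \ref{thm13} (the permutation-equivariant analogue) and the corresponding untwisted local characterization in Theorem \ref{adelthm}, using the fact that under Assumption \ref{eu} the fake twisted cone and the twisted loop-group operators are related to their untwisted counterparts by a single Adams-operation-covariant rescaling. Concretely, I expect the answer to take the form
\begin{align*}
\mathcal{L}_{S_\infty}^{tw} = \Box\left(\exp\left(\sum_{l\geq 1} s_l \frac{\psi^l E^\vee}{1-q^l}\right)\right)\mathcal{L}_{S_\infty},
\end{align*}
or more precisely that a point $\mathcal{J}^{tw}_{S_\infty}(\mathbf{t})$ of $\mathcal{L}_{S_\infty}^{tw}$ is obtained from a point of $\mathcal{L}_{S_\infty}$ by applying the hypergeometric modification operator built out of $E$; the operator in question is exactly the $R_\eta R_k^{-1}$-type factor from Theorem \ref{thm12}(2) assembled adelically over all roots of unity.

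First I would set up the comparison at $q=1$: by Theorem \ref{thm13}(2), $(\mathcal{J}^{tw}_{S_\infty})_{(1)}\in\mathcal{L}^{tw}_{fake}$, and by Theorem \ref{thm12}(2), $\mathcal{L}^{tw}_{fake}=\exp(\sum_{l\geq1}s_l\frac{\psi^l E^\vee}{1-q^l})\mathcal{L}_{fake}$. So the expansion at $1$ of any twisted point is the image under this explicit operator of a point on $\mathcal{L}_{fake}$, which by Theorem \ref{adelthm}(2) is the expansion at $1$ of some untwisted permutation-equivariant point. Then I would check that the operator $\exp(\sum_l s_l\frac{\psi^l E^\vee}{1-q^l})$ is compatible with the local conditions at the other roots of unity. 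The key algebraic input is Proposition \ref{psik}: under Assumption \ref{eu}, $R_k=\psi^k R_1$, so that $\mathcal{T}^{tw}_k(\mathbf{f}^{tw})$, as defined by the procedure in Definition \ref{tang1} applied to $\mathbf{f}^{tw}=R_1\mathbf{f}$, equals $R_1\big|_{q\mapsto q^k}$ applied (after the $\psi^{1/k},\psi^k$ sandwich) to $\mathcal{T}_k(\mathbf{f})$; unwinding, $\mathcal{T}^{tw}_k=\psi^k R_1(q^k,Q^k)\psi^{1/k}\,\mathcal{T}_k = R_k\,\mathcal{T}_k$ up to the rescaling absorbed in Remark \ref{rescaling}. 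Feeding this into condition (3) of Theorem \ref{thm13} and comparing with condition (3) of Theorem \ref{adelthm}, the twisted local condition at a primitive $k$-th root $\eta$ becomes: $R_\eta R_k^{-1}\Box_\eta\Box_k^{-1}R_k\,\mathcal{T}_k((\mathcal{J}_{S_\infty})_{(1)})$, i.e. $R_\eta\Box_\eta\Box_k^{-1}\mathcal{T}_k((\mathcal{J}_{S_\infty})_{(1)})$, which is precisely $R_\eta$ applied to the $\eta$-expansion of an untwisted permutation-equivariant point.

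The final step is to package this into a single global operator. Since $R_\eta=\exp(\sum_{l\geq1}s_l\frac{\psi^l E^\vee}{1-q^{l/k}\eta^{-l}})$ is exactly the $\eta$-expansion (after the substitution $q^{1/k}\eta^{-1}\mapsto q$, $q\mapsto q\eta$ identifying $\mathcal{K}^\eta$ with the global loop space) of the single rational operator $\exp(\sum_{l\geq1}s_l\frac{\psi^l E^\vee}{1-q^l})$, all the local conditions are uniformly the images under this one operator of the local conditions characterizing $\mathcal{L}_{S_\infty}$. By the adelic characterization — a point of the loop space lies on the cone iff all its local expansions satisfy conditions (1)--(3) — applying the globally-defined operator $\exp(\sum_{l\geq1}s_l\frac{\psi^l E^\vee}{1-q^l})$ to $\mathcal{L}_{S_\infty}$ lands exactly in $\mathcal{L}_{S_\infty}^{tw}$, and conversely; hence the two cones are identified by this operator. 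I expect the main obstacle to be the bookkeeping around Remark \ref{rescaling}: one must verify that the "correction" factors $\exp(\sum_l s_l\psi^l E)$ and the pairing-change constant $\frac{1-T_X^\vee}{1-\psi^k T_X^\vee}$ that were discarded locally in the proofs of Theorem \ref{thm12} and Proposition \ref{rcomp} genuinely reassemble into a globally well-defined change of polarization/pairing, so that the operator $\exp(\sum_{l\geq1}s_l\frac{\psi^l E^\vee}{1-q^l})$ is a symplectomorphism of the relevant loop space and the equality of cones is literally true rather than true only "up to rescaling." Checking that $R_\eta R_k^{-1}$ and the remaining factors have no poles at $q=1$ (Remark \ref{opform}) is what makes this reassembly consistent.
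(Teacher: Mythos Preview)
Your proposal is correct and follows essentially the same approach as the paper: start from a point $\mathbf{f}\in\mathcal{L}_{S_\infty}$, set $\mathbf{g}=\exp\left(\sum_{l\geq1}s_l\frac{\psi^l E^\vee}{1-q^l}\right)\mathbf{f}$, and verify that $\mathbf{g}$ satisfies the three local conditions of Theorem~\ref{thm13} given that $\mathbf{f}$ satisfies those of Theorem~\ref{adelthm}; the key identity is exactly your observation that under Assumption~\ref{eu} one has $\psi^k\!\left(R_1(q^k)\right)=R_k$, so $\mathcal{T}^{tw}_k=R_k\,\mathcal{T}_k$, together with the fact that the substitution $q\mapsto q^{1/k}\eta^{-1}$ in the global operator yields $R_\eta$. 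Your anticipated obstacle about the rescaling bookkeeping from Remark~\ref{rescaling} is not a genuine issue here --- the paper simply invokes Assumption~\ref{eu} (``if all $s_l=-1/l$'') at the one place it is needed and does not revisit the discarded constants, since they have already been absorbed in the earlier local computations.
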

                           \begin{align*}
                           \mathcal{L}_{S_\infty}^{tw}= e^{\sum_{l>0}s_l \frac{\psi^l E^\vee}{(1-q^l)}}\mathcal{L}_{S_\infty}.
                           \end{align*} 
                           
                           \begin{proof} Let 
                             
                            \begin{align*}
                             \mathbf{g}(q) = e^{\sum_{l>0}s_l \frac{\psi^l E^\vee}{(1-q^l)}} \mathbf{f}(q),
                            \end{align*}
                            where $\mathbf{f} \in \mathcal{L}_{S_\infty}$. We will prove that $\mathbf{g}$ satisfies the conditions in Theorem \ref{thm13} assuming $\mathbf{f}$ satisfies 
                            the conditions of Theorem \ref{adelthm}. The first one is obvious. 
                            
                              For the second condition notice that 
                              \begin{align*}
                                \mathbf{g}_{(1)} = e^{\sum_{l>0}s_l \frac{\psi^l E^\vee}{(1-q^l)}} \mathbf{f}_{(1)}.
                               \end{align*}
                             
                             Since we assume by Theorem \ref{adelthm} that $\mathbf{f}_{(1)} \in \mathcal{L}_{fake}$ and we proved that 
                             \begin{align*}
                             \mathcal{L}_{fake}^{tw} = e^{\sum_{l>0}s_l \frac{\psi^l E^\vee}{(1-q^l)}} \mathcal{L}_{fake},
                              \end{align*} 
                             it follows that $\mathbf{g}_{(1)}\in \mathcal{L}_{fake}^{tw}$.
                             
                              Also notice that if the tangent space at $\mathbf{f}_{(1)}$ to $\mathcal{L}_{fake}$ is given as the image of a map 
                                \begin{align*}
                                S(q,Q):\mathcal{K}_+ \to \mathcal{K}, 
                                \end{align*}
                              then the same tangent space at $\mathbf{g}_{(1)}$ to $\mathcal{L}_{fake}^{tw}$ is given by 
                              \begin{align*}
                              S'(q,Q) = e^{\sum_{l>0} s_l \frac{\psi^l E^\vee}{1-q^l}} S(q,Q):\mathcal{K}_+ \to \mathcal{K}.
                              \end{align*}  
                              
                              According to our assumption
                                 \begin{align*}
                                 \mathbf{f}(q^{1/k}\eta^{-1})\in \Box_\eta \Box_{k}^{-1} S(q^k, Q^k)\mathcal{K}^{fake}_+.
                                 \end{align*}
                             It is an easy computation to see that
                             \begin{align*}
                                   \mathbf{g}(q^{1/k}\eta^{-1}) &=e^{\sum_{l>0}s_l \frac{\psi^l E^\vee}{(1-q^{l/k}\eta^{-l})}}\mathbf{f}(q^{1/k}\eta^{-1}) = \\
                                                                & = R_\eta \mathbf{f}(q^{1/k}\eta^{-1})
                                    \in R_\eta R_k^{-1}\Box_\eta \Box_{k}^{-1} S'(q^k, Q^k)\mathcal{K}^{fake}_+
                                   \end{align*}
                                  if all $s_l=- 1/l$. This concludes the proof.
                               \end{proof}
                               
              \begin{shift}
                 {\em In the non permutation-equivariant case it was difficult to express the application point of the twisted $J$ function in terms of $\mathcal{J}(\mathbf{t}(q))$. In the  permutation-equivariant case , Theorem \ref{snthm} above allows us to achieve this very nicely. More precisely the projection to $\mathcal{K}_+$ of an element }
                   \begin{align*}
                   e^{\sum_{l>0} \frac{\psi^l E^\vee}{l(q^l-1)}}\mathcal{J}_{S_\infty}(\mathbf{t}(q))
                   \end{align*} 
                  {\em is}  $1-q +\mathbf{t}(q) - E^\vee$.
                  \end{shift}
               
              As a consequence of Theorem \ref{snthm} we can describe the cone of a theory  twisted by a general multiplicative class. Define a twisted theory by inserting in the correlators the general multiplicative class
                \begin{align*}
                \exp\left(\sum_{l<0} s_l \psi^l E_{n,d}\right)
                \end{align*} 
                and assume for convergence purposes that the class $E\in K^0(X,\Lambda_+)$ ( and $\psi^l$ acts on the coefficient in $\Lambda_+$). Denote by $\mathcal{L}_{S_\infty}^{tw}$ the range of its $J$-function. Then
                \begin{general}
               \begin{align*}
               \mathcal{L}_{S_\infty}^{tw} =  e^{\sum_{l} s_l\frac{\psi^l E^\vee}{(1-q^l)}}\mathcal{L}_{S_\infty}.
               \end{align*}
                \end{general}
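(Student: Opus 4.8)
\section*{Proof proposal for Corollary \ref{general}}

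The plan is to reduce the statement to Theorem \ref{snthm} by absorbing the parameters $s_l$ into the twisting bundle: after enlarging the ground ring, a general multiplicative class of $E_{n,d}$ becomes the $K$-theoretic Euler class of a modified bundle. Since $e_K(V)=\exp\bigl(\sum_{l<0}\tfrac1l\,\psi^l V\bigr)$ (immediate from $e_K(L)=1-L^\vee$ and the splitting principle), to realize $\exp\bigl(\sum_{l<0}s_l\,\psi^l E_{n,d}\bigr)$ as an Euler class I would produce a ``scalar'' class $M$, pulled back from a point, with $\psi^l M=l\,s_l$ for all $l<0$. Then, by the projection formula, the bundle $\widetilde E:=E\cdot M$ satisfies $\psi^l\widetilde E_{n,d}=\psi^l M\cdot\psi^l E_{n,d}=l\,s_l\,\psi^l E_{n,d}$, so that
\begin{align*}
\exp\Bigl(\sum_{l<0}s_l\,\psi^l E_{n,d}\Bigr)=\exp\Bigl(\sum_{l<0}\tfrac1l\,\psi^l\widetilde E_{n,d}\Bigr)=e_K(\widetilde E_{n,d}).
\end{align*}
Such an $M$ exists because the $s_l$ are free formal parameters: one replaces $\mathbb{C}[[s_1,s_2,\dots]]$ by (the completion of) the ring of symmetric functions, sets $s_l:=p_l/l$ for the power sums $p_l$, takes $M:=p_1$, and uses $\psi^l(p_m)=p_{lm}$. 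This forces $\psi^l$ to act nontrivially on the $s_l$, which is harmless: the correlators — hence the cones $\mathcal{L}_{S_\infty}$ and $\mathcal{L}^{tw}_{S_\infty}$ — depend only on the underlying ring and not on its $\psi$-structure, while $M\in\Lambda'_+$ guarantees, given $E\in K^0(X,\Lambda_+)$ and hence $\widetilde E\in K^0(X,\Lambda'_+)$, that the twisted theory is well defined in the $\Lambda'_+$-adic topology.

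Granting this, the theory twisted by our general multiplicative class is literally the Euler-class-twisted theory attached to $\widetilde E$ over the enlarged ground ring, so Theorem \ref{snthm} applies with $E$ replaced by $\widetilde E$ and gives $\mathcal{L}^{tw}_{S_\infty}=\exp\bigl(\sum_{l>0}(-\tfrac1l)\,\tfrac{\psi^l\widetilde E^\vee}{1-q^l}\bigr)\,\mathcal{L}_{S_\infty}$. It then remains to rewrite the loop operator in terms of $E$. Using $\widetilde E^\vee=E^\vee\cdot M^\vee$, the identity $\psi^l(M^\vee)=(\psi^l M)^\vee=\psi^{-l}M=(-l)\,s_{-l}$ for $l>0$ (the convention $\psi^{-l}(\cdot)=\psi^l((\cdot)^\vee)$ together with $\psi^l M=l\,s_l$ for $l<0$), and the sign convention $s_{-l}=s_l$, one finds $(-\tfrac1l)\,\psi^l\widetilde E^\vee=s_l\,\psi^l E^\vee$, and the operator collapses to $\exp\bigl(\sum_{l>0}s_l\,\tfrac{\psi^l E^\vee}{1-q^l}\bigr)$, which is the asserted transformation.

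I expect the main obstacle to be the first step — making the ground-ring enlargement rigorous. One must check that the construction of $M$ can be carried out inside a $\lambda$-ring $\Lambda'$ for which all of Givental's $\Lambda_+$-adic formalism (virtual structure sheaves, permutation-equivariant $J$-function, Lagrangian cone, and in particular the validity of Theorem \ref{snthm}) remains in force, and that the passage from the general twisting class to $e_K(\widetilde E_{n,d})$ is an equality of $K$-theory classes on the moduli stacks, not merely after applying some auxiliary characteristic class. Once this is in place everything else is formal manipulation with conventions already fixed in the paper; in particular the final sign bookkeeping is forced by the definitions of $e_K$, of the Adams operations, and of the convention $s_{-l}=s_l$.
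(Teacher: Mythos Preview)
Your reduction is correct in spirit but takes a genuinely different route from the paper. The paper writes the general multiplicative class as a product $\prod_{k\ge 1}\bigl(e_K(\psi^k E)\bigr)^{t_k}$ of powers of Euler classes, determining the exponents $t_k$ from the $s_l$ by M\"obius inversion of the system $l s_l=\sum_{k\mid l}k t_k$, and then appeals to Theorem~\ref{snthm} factor by factor. You instead manufacture a single scalar $M$ in an enlarged $\lambda$-ring so that the general class is literally $e_K(\widetilde E_{n,d})$ for $\widetilde E=M\cdot E$, and apply Theorem~\ref{snthm} once. Your route has the pleasant feature that, because $M$ is pulled back from a point, $\widetilde E_{n,d}=M\cdot E_{n,d}$ is honestly an index bundle, so the hypothesis of Theorem~\ref{snthm} is met on the nose; implicitly what makes the verification of condition~(3) go through is that your choice forces $\psi^k(s_l)=k\,s_{lk}$ in the enlarged ground ring, which is exactly the identity $R_k=\psi^k(R_1(q^k))$ needed in the proof. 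The paper's M\"obius decomposition is more elementary and keeps the ground ring fixed, at the cost of leaving a bit more to the reader.

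Two small points. First, with the natural convention that duality on $K^0(X)\otimes\Lambda'$ is $\Lambda'$-linear (so scalars are self-dual), the requirement becomes $\psi^m(M)=-m\,s_m$ for $m>0$; with $\psi^m(p_1)=p_m$ this forces either $M=-p_1$ with $s_l=p_l/l$, or $M=p_1$ with $s_l=-p_l/l$ --- your stated pair $M=p_1$, $s_l=p_l/l$ is off by a sign, though your final collapse of the operator is unaffected. Second, your own caveat is on target: the substantive thing to check is that Givental's permutation-equivariant formalism and the proof of Theorem~\ref{snthm} go through verbatim over the enlarged $\lambda$-ring $\Lambda'$ (with the $\psi^k$ now acting nontrivially on the $s_l$); once that is granted, everything else is bookkeeping.
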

             \begin{proof}
             
             We want to express the multiplicative class as a linear combination of Euler classes of $\psi^k E$:
             \begin{align*}
             \exp\left(\sum_{l<0} s_l \psi^l E_{n,d}\right)=\prod_{k\geq 1}(e_K(\psi^k E))^{t_k}
             \end{align*}
             
             This gives the system
             \begin{align*}
             \sum_l s_l \psi^l V  =\sum_{k>0} t_k \left[\sum_{i<0}\frac{\psi^{ki}V}{i}  \right],
             \end{align*}
or equivalently 
              \begin{align*}
              ls_l =\sum_{k\vert l, k>0}kt_k,\qquad l=-1,-2,...
              \end{align*}
             It can be solved by M\"obius inversion formula.  Using then Theorem \ref{snthm}  concludes the proof.
             \end{proof}
             
             Let us now recall the $\mathcal{D}_q$ module structure recently proved in \cite{giv}. For a Novikov variable $Q_i$ let $p_i\in H^2(X)$ the dual cohomological class and let $P_i=e^{-p_i}\in K^0(X)$. It is known (\cite{gito}) that in the non-permutation equivariant case the operator $P_i q^{Q_i\partial_{Q_i}}$  preserves tangent spaces to the cone $\mathcal{L}$. The analogue statement in the permutation-equivariant theory is
             the following
            \begin{dmodule}   \label{dmod}
            {\em (\cite{giv}, Part IV) Let $\lambda\in \Lambda_+$. Then the cone $\mathcal{L}_{S_\infty}$ is invariant under expressions of the form} 
             \begin{align*}
     \exp\left( \sum_{k >0} \frac{\psi^k (\lambda D(P_i q^{Q_i\partial_{Q_i}},q))}{k(1-q^k)}\right),
           \end{align*}  
   {\em where $D$ is a Laurent polynomial in $P_i q^{Q_i\partial_{Q_i}},q$ with coefficients from $\Lambda$ independent of $Q$}.    
           \end{dmodule}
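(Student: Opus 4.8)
The plan is to run the argument of Theorem \ref{snthm} one more time. Write $\mathbf{D}_i:=P_i q^{Q_i\partial_{Q_i}}$ and
\[
\Psi:=\exp\Bigl(\sum_{k>0}\frac{\psi^k\bigl(\lambda D(\mathbf{D}_i,q)\bigr)}{k(1-q^k)}\Bigr).
\]
The first observation is that $\Psi$ has exactly the shape of the loop--group operators produced by index-bundle twistings: it is the operator of Theorem \ref{snthm} (and of its corollary for arbitrary multiplicative classes) with the scalar $E^\vee$ formally replaced by the difference operator $\lambda D(\mathbf{D}_i,q)$. This is not a coincidence --- the operator $q^{Q_i\partial_{Q_i}}$ enters quantum $K$-theory precisely through the Novikov-degree dependence of the index bundles $\pi_*(ev^*L)$ over $X_{0,n,d}$ (Riemann--Roch on the genus-zero fibres makes the rank and the determinant of such a bundle depend on $d$ only through $\langle d,c_1(L)\rangle$), and this is what underlies the $\mathcal{D}_q$-module structure of \cite{gito}. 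So the approach is: reduce to the case in which $D$ is a monomial $\mathbf{D}_i^{m}$, and verify that $\mathbf{g}:=\Psi\,\mathbf{f}$ satisfies the three conditions of the adelic characterization of Theorem \ref{adelthm} whenever $\mathbf{f}\in\mathcal{L}_{S_\infty}$ does. Condition $(1)$ is immediate, since $\mathbf{D}_i$ acts by $Q^d\mapsto q^{\langle d,p_i\rangle}P_iQ^d$ and the only new poles are at roots of unity, coming from the factors $1-q^k$.

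The two inputs I would rely on are: \textbf{(i)} the permutation-equivariant and fake-level analogues of the fact quoted from \cite{gito}, namely that $\mathbf{D}_i$ preserves the tangent spaces to $\mathcal{L}_{S_\infty}$ and to $\mathcal{L}_{fake}$ (the latter, transported by $\operatorname{qch}$ and the $\operatorname{Td}$-twisting $\triangle$, being the quantum $\mathcal{D}$-module structure of $\mathcal{L}_H$, i.e.\ invariance under $zQ_i\partial_{Q_i}-p_i$); the proof is the same $q$-difference equation for the fundamental solution $S(q,Q)$, whose derivation uses only the Kawasaki-stratum combinatorics and is insensitive to the $S_\infty$-enrichment, exactly as in Theorems \ref{adelthm} and \ref{thm13}. \textbf{(ii)} Since the derivation of the general-multiplicative-class corollary (M\"obius inversion to Euler classes, then Theorem \ref{snthm}) is purely formal in ``$E^\vee$'', the computations there --- the identification of $\Box_\eta\Box_k^{-1}$ with the operator in condition $(3)$ of Theorem \ref{adelthm}, and the $R_\eta,R_k$ formulas of Proposition \ref{rcomp} --- go through with $E^\vee$ replaced by $\lambda D(\mathbf{D}_i,q)$, because $\mathbf{D}_i$ commutes with the $\psi^k$-rescalings and the $\Box$-operators up to factors of exactly the kind that are absorbed by the change of pairing (Remark \ref{rescaling}). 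Granting (i) and (ii), conditions $(2)$ and $(3)$ for $\mathbf{g}$ follow by transporting the proof of Theorem \ref{snthm}: condition $(3)$ becomes the matching of $\mathcal{T}_k(\mathbf{g}_{(1)})$ with $\mathcal{T}_k(\mathbf{f}_{(1)})$ conjugated by $\Psi$ (Definition \ref{tang1}), and the point that $D(\mathbf{D}_i,q)\mathbf{f}$ lies in the tangent space $T_{\mathbf{f}}$ --- by (i) --- is what keeps the resulting flow on $\mathcal{L}_{S_\infty}$ itself rather than on a twisted copy of it.

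I expect the real work to be condition $(2)$, that $\mathbf{g}_{(1)}\in\mathcal{L}_{fake}$. Here $\Psi$ genuinely has poles at $q=1$, so one must expand it near $q=1$ (this is where Bernoulli numbers and the Euler--Maclaurin formalism reappear), and the claim then reduces --- via $\operatorname{qch}$ and the identification $\operatorname{qch}(\mathcal{L}_{fake})=\triangle\mathcal{L}_H$ (Example \ref{examplefake}) --- to a statement purely about $\mathcal{L}_H$: that it is preserved by the exponential of a specific $z$-series built from the operators $zQ_i\partial_{Q_i}-p_i$. This should follow from the cohomological $\mathcal{D}$-module structure, but it requires checking that $\triangle$, whose Euler--Maclaurin asymptotics mix the powers of $z$ nontrivially, correctly intertwines $\Psi$ near $q=1$ with its cohomological counterpart; that intertwining is the delicate point. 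The remaining difficulty is foundational rather than computational: one has to make precise the passage from the scalar twisting classes of the corollary to the operator-valued ``class'' $\lambda D(\mathbf{D}_i,q)$ --- i.e.\ spell out the geometric identity on $X_{0,n,d}$ that turns the degree-dependent part of an index-bundle twisting into the $q^{Q_i\partial_{Q_i}}$-action --- and to secure convergence, which is what the hypothesis $\lambda\in\Lambda_+$ provides, as it makes $\Psi$ congruent to the identity modulo $\Lambda_+$ so that the exponential and the corresponding flow are well defined.
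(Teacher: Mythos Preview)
The paper does not prove this statement. Theorem \ref{dmod} is stated with the attribution ``(\cite{giv}, Part IV)'' and is used as a black box: immediately after the statement the paper writes ``We combine Theorem \ref{dmod} with Theorem \ref{snthm} to prove a `quantum Lefschetz' general result,'' and no argument for Theorem \ref{dmod} itself is given anywhere in the text. So there is nothing here to compare your proposal against.

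That said, your strategy is the right one and is essentially what Givental does in the cited reference: one checks the three adelic conditions of Theorem \ref{adelthm} for $\mathbf{g}=\Psi\mathbf{f}$. Two comments on your sketch. First, the framing ``replace $E^\vee$ by the difference operator $\lambda D(\mathbf{D}_i,q)$ in Theorem \ref{snthm}'' is heuristically useful but should not be pushed too literally: Theorem \ref{snthm} is about multiplication by a class, whereas $\Psi$ is a genuine $q$-difference operator, and the verification of condition $(3)$ in \cite{giv} proceeds by a direct computation of how $\psi^k$ and the substitution $q\mapsto q^{1/k}\eta^{-1}$ interact with $P_iq^{Q_i\partial_{Q_i}}$, rather than by invoking the $R_\eta,R_k$ machinery of Proposition \ref{rcomp}. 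Second, for condition $(2)$ you correctly identify that the statement transports to the fake/cohomological level; in \cite{giv} this step is handled by the already-established fake-theory analogue of the $\mathcal{D}_q$-module structure, so the ``delicate intertwining with $\triangle$'' you flag is absorbed into that prior result rather than redone from scratch.
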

   
   We combine Theorem \ref{dmod} with Theorem \ref{snthm} to prove  a "quantum Lefschetz" general result.
    \begin{lefschetz}\label{qlefschetz}
    Let $V\subset X$ be a hypersurface given as the zero section of a convex line bundle $L$. Let\footnote{We discard $S_\infty$ from the notation as
    we will only talk about permutation-equivariant theory from now on.} 
    \begin{align*}
\mathcal{J}_X = \sum_{d\in Eff(X)} J_d Q^d    
    \end{align*}
 be a point on the cone of the permutation equivariant theory of $X$. Then the point
   \begin{align*}
              \mathcal{I}_V=\sum_{d\in Eff(X)}J_d Q^d \prod_{r=1}^{\langle c_1(L),d\rangle}(1- L^\vee q^r) 
              \end{align*} 
   lies on the cone of the permutation-equivariant K-theory of $V$. 
    \end{lefschetz}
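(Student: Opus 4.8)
\emph{Proof plan.}
The plan is to exhibit $\mathcal{I}_V$ as the image of $\mathcal{J}_X$ under a composition of two operators whose effect on cones is already under control: the $\mathcal{D}_q$-module symmetry of Theorem \ref{dmod}, which preserves $\mathcal{L}_{S_\infty}(X)$, and the Euler-twist operator of Theorem \ref{snthm}, which carries $\mathcal{L}_{S_\infty}(X)$ onto the Euler-twisted cone $\mathcal{L}^{tw}_{S_\infty}(X)$; one then identifies that twisted cone with the cone of $V$ by convexity. Write $c_1(L)=\sum_i a_i p_i$, so that $L^\vee=\prod_i P_i^{a_i}$, $\psi^k L^\vee=(L^\vee)^k$, and $\langle c_1(L),d\rangle=\sum_i a_i d_i$, and introduce the $q$-difference operator
\[
D \;=\; q\prod_i\bigl(P_i\, q^{Q_i\partial_{Q_i}}\bigr)^{a_i},
\]
a Laurent polynomial in $q$ and the $P_i q^{Q_i\partial_{Q_i}}$ with coefficients independent of $Q$, which acts on $\mathcal{K}_+$ diagonally for the Novikov grading by $\Phi Q^d\mapsto L^\vee q^{1+\langle c_1(L),d\rangle}\Phi Q^d$; in particular $\psi^k(D)$ acts as the $k$-th power of $D$. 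The claim is that
\[
e_K(L)\cdot\mathcal{I}_V \;=\; \exp\Bigl(-\sum_{k\ge1}\frac{\psi^k L^\vee}{k(1-q^k)}\Bigr)\,\exp\Bigl(\sum_{k\ge1}\frac{\psi^k(D)}{k(1-q^k)}\Bigr)\,\mathcal{J}_X .
\]

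This identity is a direct computation. Expanding $\log\bigl(\prod_{r=0}^{m}(1-L^\vee q^r)\bigr)=-\sum_{k\ge1}\frac{(L^\vee)^k}{k}\sum_{r=0}^m q^{rk}$, summing the geometric progression $\sum_{r=0}^m q^{rk}=\frac{1-q^{(m+1)k}}{1-q^k}$ for $m=\langle c_1(L),d\rangle$, and recognizing $(L^\vee)^k q^{(m+1)k}=D^k=\psi^k(D)$ as an operator on $Q^d$, one sees that the two exponentials above (which commute, the first involving only $L^\vee\in K^0(X)$ and the second being Novikov-diagonal) act on $\Phi Q^d$ by the scalar $\prod_{r=0}^{\langle c_1(L),d\rangle}(1-L^\vee q^r)=e_K(L)\prod_{r=1}^{\langle c_1(L),d\rangle}(1-L^\vee q^r)$, which is exactly $e_K(L)\cdot\mathcal{I}_V$. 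Now the inner exponential is of the form in Theorem \ref{dmod} (with $\lambda=1$ and $D$ as above), so it preserves $\mathcal{L}_{S_\infty}(X)$; using $\lambda=1\notin\Lambda_+$ is legitimate precisely because convexity of $L$ forces $\langle c_1(L),d\rangle\ge0$ on $Eff(X)$, so that for each Novikov monomial only finitely many $k$ contribute below any fixed power of $q$. The outer exponential is exactly the operator of Theorem \ref{snthm} for the Euler twist by $E=L$ (parameters $s_l=-1/l$), hence sends $\mathcal{L}_{S_\infty}(X)$ onto $\mathcal{L}^{tw}_{S_\infty}(X)$. Therefore $e_K(L)\cdot\mathcal{I}_V\in\mathcal{L}^{tw}_{S_\infty}(X)$.

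It remains to invoke the convexity comparison between the Euler-twisted permutation-equivariant K-theory of $X$ and the untwisted permutation-equivariant K-theory of $V$, the K-theoretic quantum Lefschetz principle alluded to in Remark \ref{hypers}. Since $L$ is convex, the index bundle $E_{n,d}=\pi_*(ev_{n+1}^*L)$ is an honest vector bundle on $X_{0,n,d}$ whose zero locus is $\bigsqcup_{j_*d'=d}V_{0,n,d'}$ (with $j:V\hookrightarrow X$ and $V_{0,n,d'}$ the moduli of stable maps to $V$), and a Koszul resolution identifies $\mathcal{O}^{vir}_{V_{0,n,d'}}$ with $\mathcal{O}^{vir}_{X_{0,n,d}}\otimes e_K(E_{n,d})$; pushing forward $S_n$-equivariantly to a point turns Euler-twisted permutation-equivariant invariants of $X$ into permutation-equivariant invariants of $V$ with insertions pulled back along $j^*$. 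On the level of cones, and matching the twisted pairing $\chi(X,\alpha\beta\,e_K(L))$ on $X$ with the pairing $\chi(V,j^*\alpha\,j^*\beta)$ on $V$ (which holds because $j_*j^*=(\cdot)\,e_K(L)$), this says that $\mathcal{L}^{tw}_{S_\infty}(X)$ equals $e_K(L)$ times the cone $\mathcal{L}_{S_\infty}(V)$, expressed in ambient coordinates. Dividing the conclusion of the previous paragraph by $e_K(L)$ then gives that $\mathcal{I}_V$ is a point on $\mathcal{L}_{S_\infty}(V)$, as asserted.

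The main obstacle is the bookkeeping of Euler-class factors. First, one must check that the hypergeometric factor $\prod_{r=1}^{\langle c_1(L),d\rangle}(1-L^\vee q^r)$ splits into exactly one factor of the Theorem \ref{dmod} type and one of the Theorem \ref{snthm} type: the leftover $e_K(L)^{-1}$ is not a defect but precisely what is absorbed by the Euler class built into the convexity comparison of virtual structure sheaves. Second, one must track that Euler class (and the Novikov variables, including the "ambient classes'' caveat) through that comparison so that $\mathcal{I}_V$ lands exactly on $\mathcal{L}_{S_\infty}(V)$. A subsidiary point, which is where convexity of $L$ is genuinely used beyond producing a polynomial factor, is the convergence that justifies applying Theorem \ref{dmod} with $\lambda=1$.
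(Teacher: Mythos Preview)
Your proof is correct and follows essentially the same route as the paper's: factor the hypergeometric modification into the operator $\Gamma_{q^{-1}}(L^\vee q^{Q\partial_Q})^{-1}$ of Theorem~\ref{dmod} type (your inner exponential with $D=q\cdot L^\vee q^{Q\partial_Q}$), which preserves $\mathcal{L}_{S_\infty}(X)$, followed by the Euler-twist operator of Theorem~\ref{snthm}, and then invoke the convexity comparison $\mathcal{O}^{vir}_{n,d,V}=e_K(L_{n,d})\otimes\mathcal{O}^{vir}_{n,d,X}$ (the paper cites \cite{kkp} here rather than spelling out the Koszul argument). Your explicit remark that applying Theorem~\ref{dmod} with $\lambda=1\notin\Lambda_+$ is justified by the finiteness coming from $\langle c_1(L),d\rangle\ge 0$ is a point the paper glosses over.
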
 
    More precisely  if $i:V\to X$ is the inclusion then
      \begin{align*}
      e_K(L) \mathcal{I}_V = i_*\mathcal{J}_V(i^*\mathbf{t}(q)), 
      \end{align*}
        where $\mathbf{t}(q)$ can be explicitly computed via projection to $\mathcal{K}_+$ and $i_*$ on the RHS acts also on the Novikov variables  via the natural map $i_*: H_2(V)\to H_2(X)$.
    
    \begin{proof} 

 The arguments of \cite{kkp} extend in K-theory to show that 
 
 \begin{align*}
 \mathcal{O}^{vir}_{n,d, V} = e_K(L_{n,d})\otimes \mathcal{O}^{vir}_{n,d,X}.
 \end{align*}
 Hence  the K-theoretic GW theory of $X$ twisted by the Euler class of $L_{n,d}$ gives the K-theoretic GW theory of $V$.
 
 Let us write $L$ as a monomial $f(P_i^{-1})$. 
  Let \begin{align*}
  \Gamma_q(x)= e^{\sum_{k>0} \frac{x^k}{k(1-q^k)}}\sim \prod_{r=0}^{\infty}\frac{1}{1-xq^r}.
  \end{align*}
 Then the operator 
 \begin{align}
 \frac{\Gamma_{q^{-1}}(f(P_i))}{\Gamma_{q^{-1}}(f(P_i q^{Q_i\partial_{Q_i}}))}\label{op1}
 \end{align} 
 acts as 
 \begin{align*}
 Q^d\mapsto Q^d \prod_{r=1}^{\langle c_1(L),d\rangle}(1-L^\vee q^r),
 \end{align*} 
hence we get 
\begin{align*}
 e_K(L)\mathcal{I}_V = e^{\sum_{k>0}\frac{\psi^k L^\vee}{k(q^k-1)}}\frac{1}{\Gamma_{q^{-1}}(f(P_iq^{Q_i\partial_{Q_i}}))} \mathcal{J}_X.
\end{align*}
According to Theorem \ref{dmod} the operator in the denominator preserves the cone of the untwisted theory of $X$. The other operator 
on the RHS moves the point on the cone of the theory twisted by $e_K(L_{n,d})$. The claim follows. 
      \end{proof}  
  In particular for $X=\mathbb{CP}^N$ we confirm results of \cite{giv}, where the following was proved using localization    
       \begin{projhyper}
       {\em (\cite{giv}, Part V) Let $V\subset \mathbb{CP}^N$ be a hypersurface given as the zero section of $\mathcal{O}(l)$ for some $l > 0$. Then}
      \begin{align*}
                           \mathcal{I}_V=(1-q)\sum_{d\geq 0} Q^d\frac{\prod_{r=1}^{dl}(1- P^l q^r) }{\prod_{r=1}^d(1- P q^r)^{N+1}}
                          \end{align*} 
         {\em is a point on the cone of the permutation equivariant K-theory of $V$.}                 
       \end{projhyper}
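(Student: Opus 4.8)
The plan is to deduce Corollary \ref{projhyper} as the special case $X=\mathbb{CP}^N$ of Theorem \ref{qlefschetz}. First I would recall that a point on the permutation-equivariant cone $\mathcal{L}_{\mathbb{CP}^N}$ is furnished by Givental's K-theoretic mirror formula: the series
\begin{align*}
\mathcal{J}_{\mathbb{CP}^N} = (1-q)\sum_{d\geq 0}\frac{Q^d}{\prod_{r=1}^d (1-Pq^r)^{N+1}}
\end{align*}
lies on $\mathcal{L}_{S_\infty}$ for $\mathbb{CP}^N$, where $P=e^{-p}\in K^0(\mathbb{CP}^N)$ is the class of $\mathcal{O}(-1)$ and $p=c_1(\mathcal{O}(1))$. (Here $P^{N+1}=0$ after specializing, but the denominator is to be read with the relation $\prod(1-Pq^r)^{N+1}$ expanded formally, exactly as in the statement.) This is the input $\mathcal{J}_X$ to which I apply the theorem.

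Next I would take the convex line bundle $L=\mathcal{O}(l)$ on $X=\mathbb{CP}^N$, so that $V$ is the degree-$l$ hypersurface, and note $c_1(L)=lp$, hence $\langle c_1(L),d\rangle = dl$, and $L^\vee = \mathcal{O}(-l)$ corresponds to the class $P^l\in K^0(\mathbb{CP}^N)$. Plugging $J_d = (1-q)/\prod_{r=1}^d(1-Pq^r)^{N+1}$ and $\langle c_1(L),d\rangle=dl$ into the formula
\begin{align*}
\mathcal{I}_V = \sum_{d\in Eff(X)} J_d\,Q^d\prod_{r=1}^{\langle c_1(L),d\rangle}(1-L^\vee q^r)
\end{align*}
of Theorem \ref{qlefschetz} produces exactly
\begin{align*}
\mathcal{I}_V = (1-q)\sum_{d\geq 0}Q^d\,\frac{\prod_{r=1}^{dl}(1-P^l q^r)}{\prod_{r=1}^d(1-Pq^r)^{N+1}},
\end{align*}
which is the claimed series. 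By Theorem \ref{qlefschetz} this lies on the cone of the permutation-equivariant K-theory of $V$, and I am done.

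The only genuine input beyond Theorem \ref{qlefschetz} is the starting point on the cone of $\mathbb{CP}^N$, so I would want to cite \cite{giv} (or the companion mirror-theorem statement) for the fact that the $q$-hypergeometric series above is on $\mathcal{L}_{S_\infty}$ for projective space — this is where the toric/permutation-equivariant mirror formula is used, and it is the one ingredient not proved in this excerpt. One should also check the edge case $d=0$: the empty products are $1$, so the constant term of $\mathcal{I}_V$ is $(1-q)\cdot 1$, consistent with the dilaton-shift normalization $1-q$. The main (mild) obstacle is purely bookkeeping: making sure the identification $L^\vee\leftrightarrow P^l$ and $\langle c_1(L),d\rangle\leftrightarrow dl$ is done consistently with the sign/duality conventions in Assumption \ref{eu} and Theorem \ref{qlefschetz} (the Euler class $e_K(L)=1-L^\vee$ and the appearance of $L^\vee$ rather than $L$ in the product), after which the corollary is an immediate substitution.
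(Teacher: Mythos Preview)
Your proposal is correct and follows exactly the paper's own argument: take the known $J$-function of $\mathbb{CP}^N$ at $\mathbf{t}(q)=0$ as the input point on the cone, then apply Theorem \ref{qlefschetz} with $L=\mathcal{O}(l)$ and substitute $\langle c_1(L),d\rangle=dl$, $L^\vee=P^l$. The only cosmetic difference is that the paper cites \cite{giv_lee} (rather than \cite{giv}) for the explicit form of $\mathcal{J}_{\mathbb{CP}^N}(0)$, which at $\mathbf{t}=0$ coincides with the permutation-equivariant series since there are no marked points to permute.
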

   \begin{proof}    
     The $J$-function of $\mathbb{CP}^N$  at $\mathbf{t}(q)=0$ is known (\cite{giv_lee}) to be 
               \begin{align*}
               \mathcal{J}_{\mathbb{CP}^N}(0)=(1-q)\sum_{d\geq 0}\frac{Q^d }{\prod_{r=1}^d(1-Pq^r)^{N+1}}.
               \end{align*} 
    Applying Theorem \ref{qlefschetz} to this  series gives the result.             
      \end{proof}                                      
    \begin{cigeneral}
    {\em Theorem \ref{qlefschetz} has a straight forward generalization for complete intersections given as zero sections of direct sums of convex line bundles on $X$.}
    \end{cigeneral}  
    
  Another application of our Theorem \ref{snthm} is to find points on the cone of the total space $E$ of a toric fibration $E\to B$ given a point 
  on the base $B$ of the fibration. The proof is along the lines of \cite{jeff} where it was done in cohomological GW theory. 
  
  First let us introduce notation. Let $X$ be a toric non-singular compact K\"ahler manifold. It can be described by symplectic reduction. Let 
  the torus $T^N$ act on $\mathbb{C}^N$ endowed with the canonical symplectic form in the usual way. The moment map of this action is
   $\mu:\mathbb{C}^N\to \mathbb{R}^N, \mu (z_1,...,z_N) = (\vert z_1\vert^2,...,\vert z_N\vert^2)$. For the action of a  subtorus $T^K \subset T^N$ the moment map is obtained as the composition $\mathbf{m}\circ \mu : \mathbb{C}^N \to \mathbb{R}^k$, where $\mathbf{m}:\mathbb{R}^N\to \mathbb{R}^K$ is the dual of the embedding of Lie algebras $\operatorname{Lie}(T^K)\subset \operatorname{Lie}(T^N)$.
   We denote the elements of the matrix $\mathbf{m}$ by $m_{ij}$. Applying symplectic reduction over a regular value 
   $\omega$ of the moment map we get a toric variety $X =\mathbb{C}^N//_\omega T^K$ of dimension $N-K$.
   
   The fibration $(\mathbf{m}\circ\mu)^{-1}(\omega)\to X$ endows $X$ with $K$ tautological line bundles which we denote $P_i$. They represent a basis of 
   $Pic(X)$ and generate $K^0(X)$. 
   
   Let $B$ be K\"ahler manifold, $L_i$ line bundles on $B$, $i=1,..,N$. We replace the fiber of $\oplus L_i$ with the toric manifold $X$, obtaining this way a toric fibration $\pi:E\to B$. It carries a fiberwise action of $T^N$. The total space $E$ carries $K$ tautological line bundles $\mathcal{P}_i$ which restrict to $P_i$ on each fiber. They generate $K^0(E)$ as an algebra over $K^0(B)$. 
   
   Similarly a degree $\mathcal{D}\in H_2(E,\mathbb{Z})$ "breaks up" as a degree $D =\pi_*(\mathcal{D})\in H_2(B,\mathbb{Z})$  and degrees 
   $d_i:=-\langle c_1(\mathcal{P}_i),\mathcal{D}\rangle$ along the fibers. We will denote the two sets of Novikov variables by $Q_B, Q$ i.e. 
   $Q_B^D$ represents $D$ in the Mori cone of $B$ and $Q^d =Q_1^{d_1}\cdot..\cdot Q_K^{d_K}$. Let us define for $j=1,...N$
   \begin{align*}
   U_j(\mathcal{P}) = \prod_{i=1}^K \mathcal{P}_i^{m_{ij}}L_j^\vee, \qquad U_j(\mathcal{D}) = \sum_{i=1}^K d_i m_{ij} + \langle c_1(L_j), D\rangle .
   \end{align*}
  We can now state
  \begin{toricfibration} \label{tfibration}
  Let 
  \begin{align*}
  \mathcal{J}_B(\mathbf{t}(q))=\sum_{D\in Eff(B)} J_d Q_B^D 
  \end{align*}
be a point on the Lagrangian cone of the permutation-equivariant K-theory of $B$. Then 
\begin{align*}
\mathcal{I}_E := \sum_{d\in \mathbb{Z}^K,D\in Eff(B)} J_d Q_B^D Q^d \prod_{j=1}^N \frac{\prod_{r=-\infty}^{0}(1-U_j(\mathcal{P}) q^r)}{\prod_{r=-\infty}^{U_j(\mathcal{D})}(1-U_j(\mathcal{P}) q^r)}
\end{align*}  
lies on the cone of the total space $E$.
  \end{toricfibration}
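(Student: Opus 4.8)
The strategy is to follow the Lefschetz argument of Theorem~\ref{qlefschetz} but with the toric structure built in via the $\mathcal{D}_q$-module structure of Theorem~\ref{dmod}. The key observation is that the total space $E\to B$ is itself a GIT/symplectic quotient: $E = (\oplus_{j=1}^N \mathcal{L}_j)//_\omega T^K$ fiberwise, so its quantum K-theory can be obtained from that of $B$ by first taking the theory of $\oplus L_j$ over $B$ (which, since each $L_j$ is a line bundle, has the same cone as $B$ — pulled-back bundles over a vector bundle base retract to the base), then performing the toric reduction. Following \cite{jeff} in cohomology, the reduction step is implemented by the $\mathcal{D}_q$-operators: introducing the fiberwise Novikov variables $Q_1,\dots,Q_K$ amounts to inserting, for each torus coordinate direction $j=1,\dots,N$, an operator built from $\Gamma_q$-factors in $U_j(\mathcal{P}\,q^{Q\partial_Q})$ that produces exactly the ratio of products $\prod_{r=-\infty}^{0}(1-U_j q^r)/\prod_{r=-\infty}^{U_j(\mathcal{D})}(1-U_j q^r)$ appearing in $\mathcal{I}_E$.

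\textbf{Key steps, in order.} First I would record that $\mathcal{J}_B(\mathbf t(q))$, regarded as a point on the cone of the space $\mathrm{Tot}(\oplus_j L_j)$ over $B$, still lies on that cone (homotopy invariance of the GW theory of a vector bundle over $B$, or directly: the fibration is equivariantly contractible to $B$). Second, for each $j$ I would define the operator
\begin{align*}
\mathsf{G}_j := \frac{\Gamma_{q^{-1}}\!\bigl(U_j(\mathcal{P})\bigr)}{\Gamma_{q^{-1}}\!\bigl(U_j(\mathcal{P}\,q^{Q\partial_Q})\bigr)}
= \exp\left(\sum_{k>0}\frac{\psi^k\bigl[U_j(\mathcal P)-U_j(\mathcal P\,q^{Q\partial Q})\bigr]^\vee}{k(q^k-1)}\right),
\end{align*}
where $P_i q^{Q_i\partial_{Q_i}}$ replaces $P_i$ appropriately; by the same telescoping computation as in the proof of Theorem~\ref{qlefschetz}, $\mathsf{G}_j$ acts on a monomial $Q_B^D Q^d$ by multiplication by $\prod_{r=-\infty}^{0}(1-U_j(\mathcal P)q^r)/\prod_{r=-\infty}^{U_j(\mathcal D)}(1-U_j(\mathcal P)q^r)$ (the infinite products are to be read as formal ratios of $\Gamma$-factors, which truncate to the finite products when $U_j(\mathcal D)\le 0$ and to the reciprocal finite product when $U_j(\mathcal D)>0$; this is the K-theoretic analogue of the cohomological factor in \cite{jeff}). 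Third, I would split $\mathsf G_j = \mathsf G_j^{\mathrm{num}}\cdot\mathsf G_j^{\mathrm{den}}$ where $\mathsf G_j^{\mathrm{den}}=\Gamma_{q^{-1}}(U_j(\mathcal P\,q^{Q\partial Q}))^{-1}$ is a $\mathcal D_q$-operator of exactly the shape in Theorem~\ref{dmod} (take $D = U_j(P_i q^{Q_i\partial_{Q_i}},q)$, $\lambda=1$), hence preserves the cone $\mathcal L_{S_\infty}$ of $B$ (equivalently of $\mathrm{Tot}(\oplus L_j)$); and $\mathsf G_j^{\mathrm{num}}=\Gamma_{q^{-1}}(U_j(\mathcal P))$ is multiplication by an honest element of $\Gamma_q$-type in the $\mathcal P_i$, which by Theorem~\ref{snthm} (and its Corollary~\ref{general}) moves the cone of $\mathrm{Tot}(\oplus L_j)$ to the cone of the theory twisted by the Euler classes $e_K$ of the index bundles of the $U_j$. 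Fourth, I would identify that twisted theory with the quantum K-theory of $E$: exactly as $\mathcal{O}^{vir}_V = e_K(L_{n,d})\otimes\mathcal O^{vir}_X$ in Theorem~\ref{qlefschetz}, the virtual structure sheaf of the toric reduction $E$ is obtained from that of $\mathrm{Tot}(\oplus L_j)$ by tensoring with $\prod_j e_K$ of the relevant index bundles (this is the K-theoretic toric mirror construction — the standard Euler-class twisting that converts the ambient vector-bundle theory into the quotient theory). Applying all the $\mathsf G_j$ in turn to $\mathcal J_B(\mathbf t(q))$ then lands on the cone of $E$, and unwinding the product of $\Gamma$-ratios gives precisely the stated formula for $\mathcal I_E$.

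\textbf{Main obstacle.} The delicate point is not any single step but making precise the claim that the quantum K-theory of the toric fibration $E$ is literally the $\mathcal D_q$-module generated, inside the Euler-twisted theory of $\mathrm{Tot}(\oplus L_j)$, by applying the operators $\mathsf G_j$ — i.e. that introducing the fiber Novikov variables $Q_i$ really is implemented by these $\mathcal{D}_q$-shifts. In cohomology this is the content of \cite{jeff}; in K-theory one must check that (a) the index-bundle twisting over the vector-bundle base is compatible with Corollary~\ref{general}/Theorem~\ref{snthm} applied coefficient-wise over $K^0(B)$, and (b) the $\Gamma_{q^{-1}}$-operators indeed generate the correct degree-$d$ truncations with the $r$ ranging over all integers $\le U_j(\mathcal D)$, including the subtlety of how the formal infinite products are regularized when $U_j(\mathcal D)$ can have either sign. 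Once (a) and (b) are in place, the rest is the telescoping identity from the proof of Theorem~\ref{qlefschetz} applied $N$ times, together with the observation that all the denominator operators $\mathsf G_j^{\mathrm{den}}$ commute with each other and with the numerator multiplications up to the bookkeeping absorbed by the change of pairing (Remark~\ref{rescaling}).
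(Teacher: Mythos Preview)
Your proposal takes a route that is fundamentally different from the paper's, and the difference is not merely stylistic: the paper's argument proceeds by \emph{fiberwise torus localization}, and I do not see how your global approach can be made to work without it.

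The paper does not attempt to identify the quantum K-theory of $E$ with an Euler-twisted theory of $\mathrm{Tot}(\oplus_j L_j)$. Instead it works $T^N$-equivariantly and uses a fixed-point characterization (Proposition~\ref{cond}): a point $\mathbf f$ lies on $\mathcal L_E$ iff each restriction $\mathbf f^\alpha$ to a fixed-point section $\alpha(B)\subset E$ lies on the cone $\mathcal L^\alpha$ of $B$ twisted by $e_K^T(N_\alpha)^{-1}$, and the residues at the non-root-of-unity poles satisfy explicit recursions along one-dimensional orbits. The recursion is checked exactly as in \cite{giv} for $B=\mathrm{pt}$; the first condition is checked by writing $\mathcal I_E^\alpha$ as
\[
\Bigl(\prod_{j\in\alpha}\Gamma_q(Q_j^\alpha)\Bigr)\Bigl(\prod_{j\notin\alpha}\frac{\Gamma_{q^{-1}}(U_j(P^\alpha)q^{Q_{B,j}^\alpha\partial_{Q_{B,j}^\alpha}})}{\Gamma_{q^{-1}}(U_j(P^\alpha))}\Bigr)\,\mathcal J_B,
\]
and then invoking Theorem~\ref{dmod} for the first group of factors and the numerators, and Theorem~\ref{snthm} for the denominators. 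Note that all the classes $U_j(P^\alpha)$ here live in $K^0(B)$ and the differential operators are in the \emph{base} Novikov variables $Q_B$; this is why Theorems~\ref{dmod} and~\ref{snthm} apply directly.

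Your Step~4 --- that the virtual structure sheaf of $E$ arises from that of $\mathrm{Tot}(\oplus L_j)$ by an Euler-class twist ``exactly as'' in the hypersurface case --- is the step that does not go through. A toric quotient is not the zero locus of a section, and no analogue of the Kim--Kresch--Pantev identity is available here; indeed \cite{jeff} in cohomology does not argue this way either. Concretely, two symptoms of the problem: (i) your operators $\mathsf G_j$ are built from $U_j(\mathcal P)$ with $\mathcal P_i\in K^0(E)$, but the $\mathcal D_q$-module structure of Theorem~\ref{dmod} is for the theory of $B$ and requires line-bundle generators of $K^0(B)$ paired with the \emph{base} Novikov variables; (ii) $\mathcal J_B$ has no dependence on the fiber Novikov variables $Q_i$, so operators of the form $q^{Q_i\partial_{Q_i}}$ act trivially on it --- the fiber degrees are introduced in the paper by the factors $\Gamma_q(Q_j^\alpha)$, which are $\alpha$-dependent and only make sense after localization. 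Without localization you have no mechanism to generate the sum over $d\in\mathbb Z^K$, and no legitimate twisted theory to land in.
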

 \begin{proof}
We use localization along the fibers. Most of the details are common with \cite{jeff} - where it was carried in the cohomological theory and \cite{giv} -where it was done for the case $B=\operatorname{pt.}$ 

Let us denote by $\mathbb{C}[\Lambda_1^{\pm 1}, \ldots , \Lambda_N^{\pm 1}]$ the ring $K^0(BT^N)$. We will work torus-quivariantly and deduce the statement of the theorem as the limit $\Lambda_i\to 1$.
Let us label the fixed points of the torus action on $X$  by multiindexes $\alpha=(j_1,...,j_K)$ which specify $K$-dimensional faces of the first orthant whose image under the map $\mathbf{m}$ contains $\omega$. Toric one dimensional orbits connecting the fixed points $\alpha$ and $\beta$ exist precisely when $\alpha \cup\beta$ has cardinality $K+1$. For a fibration $\pi: E\to B$ with fibers isomorphic to $X$ fixed points of the fiberwise action of $T^N$ form sections
$\alpha:B\to  E$, one  for each fixed point $\alpha \in X$.  The normal bundle of the section $\alpha$ is the sum of $N-K$ line bundles
  \begin{align*}
   U_j(P^\alpha):=\alpha^* U_j = \otimes_i (P_i^\alpha)^{m_{ij}}L^\vee_j \Lambda_j^{-1}, \quad j\notin \alpha 
  \end{align*}
where $P_i^\alpha$ are determined by 
\begin{align*}
\otimes_i (P_i^\alpha)^{m_{ij}}L^\vee_j =\Lambda_j,\quad j\in \alpha.
\end{align*}
Let $\mathbf{f}$ be a point on the cone $\mathcal{L}_E$ of permutation-equivariant quantum K-theory of $E$. We denote by 
\begin{align*}
\mathbf{f}^\alpha : = \alpha^* \mathbf{f} 
\end{align*}
its restriction to the fixed point section $\alpha$. Then localization gives
 \begin{align*}
 \mathbf{f} =\sum_\alpha \alpha_*\left( \frac{\mathbf{f}^\alpha}{\prod_{j\notin \alpha} e^T_K (U_j(P^\alpha))}\right),
 \end{align*}
 
 where $e_K^T$ is the torus equivariant K-theoretic Euler class.
 
 The characterization of points on the cones of toric varieties using localization given in \cite{giv} extends to this setting in the following way:
\begin{last}\label{cond}
{\em The point $\{\mathbf{f}^\alpha\}$ belongs to the cone $\mathcal{L}_E$ iff the following are satisfied
\begin{enumerate}
\item As a meromorphic function near the roots of unity $\mathbf{f}^\alpha \in \mathcal{L}^\alpha$, where $\mathcal{L}^\alpha$ is the cone of $\alpha(B)$ twisted by the inverse of the Euler class of the normal bundle $(e_K^T)^{-1}(N_\alpha)$. The variables $\Lambda_i^{\pm 1}$ and the Novikov variables are considered as elements of the coefficient ring.\\
\item The other poles, which are simple for generic values of $\Lambda_i$, come from factors of the form $(1-q^m U_j(P^\alpha))$  for $j\notin \alpha$. They have residues controlled recursively in degrees by
\begin{align*}
\operatorname{Res}_{q=\lambda^{1/m}}\mathbf{f}^\alpha (q)\frac{dq}{q}=\frac{Q^{md_{\alpha\beta}}}{e_K^T(\mathcal{N}_{\alpha\beta}^m) }\mathbf{f}^\beta(\lambda^{1/m}),
\end{align*}
where $\lambda=U_j(P^\alpha)$, $\beta$ is determined by $j$, $d_{\alpha\beta}$ is the degree of the one-dimensional orbit $\simeq \mathbb{CP}^1$ connecting $\alpha$ with $\beta$
and $\mathcal{N}_{\alpha\beta}^m$ is the normal bundle to the moduli spaces of maps to the orbit of degree $kd_{\alpha\beta}$ with two marked points at the point which is the degree $m$ cover of the orbit.
\end{enumerate}
}
 
\end{last} 

We need to check that the equivariant version of $\mathcal{I}_E$  satisfies the two conditions in Proposition \ref{cond}. First notice that 
\begin{align*}
\mathcal{I}_E^\alpha = \sum_{D,d} \frac{J_D Q_B^D Q^d}{\prod_{j\in \alpha}\prod_{r=1}^{U_j(\mathcal{D})}(1-q^r)\prod_{j\notin \alpha}\prod_{r=1}^{U_j(\mathcal{D})}(1-q^rU_j(P^\alpha))}.
\end{align*}  
The second condition is verified by the computation in \cite{giv} which carries over without any modifications.  To verify the first condition
introduce monomials in the Novikov variables $Q_j^\alpha$ such that $\prod_{j\in \alpha}Q_j^{\alpha}=Q^d$. For  $j\notin \alpha$ introduce monomials $Q_{B,j}^\alpha$ dual to the cohomology class $-c_1(U_j(P ^\alpha))$. 
Notice that the operator 
 \begin{align*}
 \prod_{j\in \alpha}\Gamma_q (Q_j^\alpha)  \prod_{j\notin \alpha} \frac{\Gamma_{q^{-1}}(U_j(P^\alpha)q^{Q_{B,j}^\alpha \partial_{Q_{B,j}^\alpha}} )}{\Gamma_{q^{-1}}(U_j(P^\alpha))} 
 \end{align*}
 transforms $\mathcal{J}_B$ into $\mathcal{I}_E$. According to the $\mathcal{D}_q$ module structure the first factors and the numerators maintain $\mathcal{J}_B$ on the cone of the untwisted theory of $\alpha(B)$, according to Theorem \ref{snthm} the denominators move points on the cone $\mathcal{L}^\alpha$ of the theory twisted by $e_K^T(N_\alpha)^{-1}$. This concludes the proof.
\end{proof} 

In the end we illustrate some computations how one can use our results to compute K-theoretic Gromov-Witten invariants of the complete intersections and toric fibrations of the Theorems \ref{qlefschetz} and \ref{tfibration} starting from their $\mathcal{I}$-functions. Let
$X\subset \mathbb{CP}^4$ be a hypersurface given as the zero section of $\mathcal{O}(5)$. Then we have proved that the hypergeometric series
      \begin{align*}
                           \mathcal{I}_X=(1-q)\sum_{d\geq 0} Q^d\frac{\prod_{r=1}^{5d}(1- P^5 q^r) }{\prod_{r=1}^d(1- P q^r)^{5}}
                          \end{align*} 
 lies on the cone $\mathcal{L}$ of the permutation-equivariant quantum K-theory of $X$. Write the coefficient of $Q$ as 
 $$Q\frac{(1-q)\prod_{r=1}^5(1-P^5q^r)}{(1-Pq)^5} := Q\cdot f(P,q) + \frac{Q \cdot g(P,q)}{(1-Pq)^5},$$
  where $f(P,q)$ is a polynomial in $q$ (hence contributes to $\mathbf{t}(q)$) and the fraction $\frac{Q \cdot g(P,q)}{(1-Pq)^5}$ belongs to $\mathcal{K}_-$ and hence comes from correlators. An immediate degree argument shows that we must have
  $$\frac{ g(P,q)}{(1-Pq)^5} =\sum_a \Phi^a \langle \frac{\Phi_a}{1-qL}\rangle_{0,1,1}^X . $$
  Pairing this expression against other classes using the K-theoretic Poincare pairing on $X$ we get all one point degree one invariants:
  $$ \langle \frac{\Phi_a}{1-qL}\rangle_{0,1,1}^X= \left(\frac{ g(P,q)}{(1-Pq)^5}, \Phi_a\right)_X  = -\operatorname{Res}_{P=1}\frac{g(P,q)\Phi_a (1-P^5)}{(1-P)^5(1-Pq)^5}\frac{dP}{P}.$$
  
  Taking $\Phi_a=1$ above we get the one point invariants 
  
  $$\langle \frac{1}{1-qL}\rangle_{0,1,1}^X = \frac{2875(1-3q)}{(1-q)^2}.$$ 
  Notice that setting $q=0$ one computes the invariant $\langle 1 \rangle_{0,1,1}^X = 2875$. This is unsurprising as according to the K-theoretic string equation  $\langle 1 \rangle_{0,1,1}^X $ equals the number of lines in $X$.
  
  Our results hold independent of the degrees of the  equations cutting out the complete intersections in projective space. For another example, consider $Y\subset \mathbb{P}^5$ given as the intersection of two quadric hypersurfaces. Then the same computation as above starting from the $\mathcal{I}$-function 
  \begin{align*}
  \mathcal{I}_Y=(1-q)\sum_{d\geq 0} Q^d\frac{\prod_{r=1}^{2d}(1- P^2 q^r)^2 }{\prod_{r=1}^d(1- P q^r)^{6}},
  \end{align*}
  gives the one point invariants of $Y$ 
  \begin{align*}
  \langle \frac{1}{1-qL}\rangle_{0,1,1}^Y = \frac{32(q^2+q^3)}{(1-q)^4}.
  \end{align*}  

\end{document}